\newcommand{\todoautc}[3][]{%
    \ifthenelse{\equal{#1}{}}{\todo[size=\scriptsize]{{\bf#2} #3}{}}{\todo[color=#1,size=\scriptsize]{{\bf#2} #3}{}}%
}
\theoremstyle{plain}
\newtheorem{theorem}{Theorem}[section]
\newtheorem{lemma}[theorem]{Lemma}
\newtheorem{assumption}[theorem]{Assumption}
\theoremstyle{definition}
\newtheorem{remark}[theorem]{Remark}
\numberwithin{equation}{section}
\newcommand{\linspan}{\mathop{\rm span}\nolimits}
\newcommand{\rest}{\left.\kern-2\nulldelimiterspace\right|_}
\newcommand{\norm}[2]{\left|#1\right|_{#2}}
\newcommand{\dnorm}[2]{\left\|#1\right\|_{#2}}
\newcommand{\Id}{{\mathbf1}}
\newcommand{\indf}{1}
\newcommand{\ex}{\mathrm{e}}
\newcommand{\p}{\partial}
\newcommand{\ed}{\mathrm d}
\newcommand*{\Bigcdot}{\raisebox{-.25ex}{\scalebox{1.25}{$\cdot$}}}
\newcommand{\clA}{{\mathcal A}}
\newcommand{\clC}{{\mathcal C}}
\newcommand{\clE}{{\mathcal E}}
\newcommand{\clG}{{\mathcal G}}
\newcommand{\clL}{{\mathcal L}}
\newcommand{\clM}{{\mathcal M}}
\newcommand{\clN}{{\mathcal N}}
\newcommand{\clU}{{\mathcal U}}
\newcommand{\clV}{{\mathcal V}}
\newcommand{\bbN}{{\mathbb N}}
\newcommand{\bbR}{{\mathbb R}}
\newcommand{\bbT}{{\mathbb T}}
\newcommand{\bfA}{{\mathbf A}}
\newcommand{\bfF}{{\mathbf F}}
\newcommand{\bfG}{{\mathbf G}}
\newcommand{\bfH}{{\mathbf H}}
\newcommand{\bfK}{{\mathbf K}}
\newcommand{\bfN}{{\mathbf N}}
\newcommand{\bfV}{{\mathbf V}}
\newcommand{\rmD}{{\mathrm D}}
\newcommand{\bfe}{{\mathbf e}}
\newcommand{\bff}{{\mathbf f}}
\newcommand{\bfg}{{\mathbf g}}
\newcommand{\bfh}{{\mathbf h}}
\newcommand{\bfi}{{\mathbf i}}
\newcommand{\bfj}{{\mathbf j}}
\newcommand{\bfn}{{\mathbf n}}
\newcommand{\bfw}{{\mathbf w}}
\newcommand{\bfy}{{\mathbf y}}
\newcommand{\bfz}{{\mathbf z}}
\newcommand{\rmd}{{\mathrm d}}
\newcommand{\rme}{{\mathrm e}}
\newcommand{\fkr}{{\mathfrak r}}
\definecolor{DarkBlue}{rgb}{0,0.08,0.45}
\definecolor{DarkRed}{rgb}{.65,0,0}
\definecolor{applegreen}{rgb}{0.55, 0.71, 0.0}
\newcounter{mymac@matlab}
\newcommand{\matlab}{MATLAB%
   \ifnum\value{mymac@matlab}<1%
   \textregistered%
   \setcounter{mymac@matlab}{1}%
   \fi%
  }
\begin{document}
\title{Feedback semiglobal stabilization to trajectories for the Kuramoto--Sivashinsky equation}
\author{S\'ergio S.~Rodrigues$^{\tt1,*}$}
\author{Dagmawi A. Seifu$^{\tt1}$}
\thanks{
\vspace{-1em}\newline\noindent
{\sc MSC2020}: 93D15, 93B52, 93C20, 35K58, 35K41.
\newline\noindent
{\sc Keywords}: exponential stabilization to trajectories, oblique projection feedback, finite-dimensional control
\newline\noindent
$^{\tt1}$ Johann Radon Institute for Computational and Applied Mathematics,
  \"OAW, Altenbergerstr. 69, 4040 Linz, Austria.
  \newline\noindent
$^{*}$ Corresponding author.
\newline\noindent
{\sc Emails}:
{\small\tt sergio.rodrigues@ricam.oeaw.ac.at,\quad
dagmawi.seifu@ricam.oeaw.ac.at}%
}

\begin{abstract}
It is shown that an oblique projection based feedback  control 
is able to stabilize the state of the Kuramoto--Sivashinsky equation evolving in rectangular
domains to a given time-dependent trajectory. The number of actuators is finite and consists of a finite number of indicator functions supported in small subdomains.
Simulations are presented, in the one-dimensional case, showing the stabilizing
performance of the feedback control.
\end{abstract}

%

\maketitle

%

\pagestyle{myheadings} \thispagestyle{plain} \markboth{\sc S. S.
Rodrigues and D. A. Seifu}{\sc Stabilization of Kuramoto--Sivashinsky equations}

\section{Introduction}

We investigate the stabilizability to trajectories for the Kuramoto--Sivashinsky equation
\begin{subequations}\label{sys-haty-KS-Intro}
  \begin{align}
 &\dot{\widehat y}+ \nu_2\Delta^2\widehat y+ \nu_1\Delta\widehat  y+\tfrac{1}{2}\nu_0\norm{\nabla\widehat  y}{\bbR^d}^2=f,\\
 &\widehat y(0,\Bigcdot)=\widehat y_0,\qquad \clG \widehat y\rest{\p\Omega}=g,
\end{align}
\end{subequations}
where~$\Omega\subset\bbR^d$ is a bounded rectangular domain, $d\in\{1,2,3\}$, and the state~$\widehat y=\widehat y(t,x)\in\bbR$
evolves in a suitable Hilbert space~$V\subset  L^2(\Omega)$,  
that is,~$\widehat y(t,\Bigcdot)\in V$ for all~$t\ge0$.
The functions~$f=f(t,x)$ and~$g=g(t,x)$ are  external forces,
and the operator~$ \clG$ imposes the boundary conditions on the boundary~${\p\Omega}$ of~$\Omega$. 
Furthermore,~$\nu_2$, $\nu_1$, and~$\nu_0$ are strictly positive constants. %
System~\eqref{sys-haty-KS-Intro} is a model for flame propagation applications.

Suppose that the evolution of~$\widehat y$ has a desirable behavior, for example, remains bounded and nonchaotic. It turns out that the solution~$\widetilde y$ issued from a different initial condition
\begin{subequations}\label{sys-tildey-KS-Intro}
 \begin{align}
 &\dot{\widetilde y}+   \nu_2\Delta^2\widetilde  y+   \nu_1\Delta\widetilde  y+\tfrac{1}{2}\nu_0\norm{ \nabla\widetilde   y}{\bbR^d}^2=f,\\
  &\widetilde y(0,\Bigcdot)= \widetilde y_0,\qquad \clG  \widetilde y\rest{\p\Omega}=g,
\end{align}
\end{subequations}
can present a different behavior, in particular can be unstable or evolve to a chaotic behavior.
Suppose also that we want to avoid such undesired behavior, and that we can act on the system through a finite number of
actuators~$\Phi_j\in L^2(\Omega)$, $1\le j\le M_\sigma$, where~$M_\sigma\in\bbN_+$ is a positive integer.
Then our goal is to find an appropriate way to tune/activate such actuators so that~$\widetilde y(t,\Bigcdot)$ will converge to~$\widehat y(t,\Bigcdot)$, as time increases.
That is, we seek a  control function~$u=u(t)$, 
$u\in L^2((0,+\infty),\bbR^{M_\sigma})$
so that the solution of
\begin{subequations}\label{sys-tildey-KS-Intro-Bu}
\begin{align}
 &\dot{\widetilde y}+   \nu_2\Delta^2 \widetilde y+  \nu_1 \Delta \widetilde y
 +\tfrac{1}{2}\nu_0\norm{ \nabla  \widetilde y}{\bbR^d}^2=f+\sum_{j=1}^{M_\sigma}u_j\Phi_j,\\
  &\widetilde y(0,\Bigcdot)= \widetilde y_0,\qquad \clG  \widetilde y\rest{\p\Omega}=g,
\end{align} 
\end{subequations}
satisfies
\begin{align}%
 &\norm{\widetilde y(t,\Bigcdot)-\widehat y(t,\Bigcdot)}{V}\le C\ex^{-\mu t}\norm{\widetilde y_0-\widehat y_0}{V},
 \qquad\mbox{for all}\qquad t\ge0.\label{goal-Intro-exp}
\end{align}

Note that the difference~$y=\widetilde y-\widehat y$ satisfies the dynamical system
\begin{subequations}\label{sys-y-KS-Intro-Bu0}
\begin{align}
 &\dot{y}+   \nu_2\Delta^2  y+   \nu_1\Delta y
 +\tfrac{1}{2}\nu_0\left(\norm{ \nabla  \widetilde y}{\bbR^d}^2-\norm{ \nabla  \widehat y}{\bbR^d}^2\right)=\sum_{j=1}^{M_\sigma}u_j\Phi_j,\\
  &y(0,\Bigcdot)= y_0,\qquad \clG   y\rest{\p\Omega}=0,
\end{align} 
with~$y_0\coloneqq\widetilde y_0-\widehat y_0$.
\end{subequations}
Observe also that, by straightforward computations,  
\begin{align}
 \tfrac{1}{2}\left(\norm{ \nabla  \widetilde y}{\bbR^d}^2-\norm{ \nabla  \widehat y}{\bbR^d}^2\right)
 &=\tfrac{1}{2}\left(\norm{ \nabla  \widehat y + \nabla y}{\bbR^d}^2-\norm{ \nabla  \widehat y}{\bbR^d}^2\right)
=\tfrac{1}{2}\left(\norm{ \nabla  y}{\bbR^d}^2+2( \nabla  \widehat y, \nabla y)_{\bbR^d}\right),\notag
\end{align} 
where~$(\Bigcdot,\Bigcdot)_{\bbR^d}$ denotes the usual scalar product in~$\bbR^d$. Therefore, we arrive at
\begin{subequations}\label{sys-y-KS-Intro-Bu}
\begin{align}
 &\dot{y}+   \nu_2\Delta^2  y+   \nu_1\Delta y
 +\nu_0( \nabla  \widehat y, \nabla y)_{\bbR^d}+\tfrac{1}{2}\nu_0\norm{ \nabla  y}{\bbR^d}^2
  =\sum_{j=1}^{M_\sigma}u_j\Phi_j,\\
  &y(0,\Bigcdot)= y_0,\qquad \clG   y\rest{\p\Omega}=0.
\end{align} 
\end{subequations}

Equation~\eqref{goal-Intro-exp} can now be rewritten as
\begin{align}%
 &\norm{y(t,\Bigcdot)}{V}\le C\ex^{-\mu t}\norm{ y_0}{V},
 \qquad\mbox{for all}\qquad t\ge0.\label{goal-diff-exp}
\end{align}
Hence, our task is to find~$u$ such that the solution of the nonautonomous
system~\eqref{sys-y-KS-Intro-Bu} satisfies~\eqref{goal-diff-exp}. Note that~\eqref{sys-y-KS-Intro-Bu} is nonautonomous
when $\widehat y=\widehat y(t,x)$ is time-dependent, this is the case when at least one
of the external forces, $f$ and~$g$, in~\eqref{sys-haty-KS-Intro} depends on time. 

\begin{remark}
In the Kuramoto--Sivashinsky model~\eqref{sys-haty-KS-Intro} it is important that~$\nu_2>0$ is strictly positive. Though in~\eqref{sys-haty-KS-Intro} the parameters~$\nu_1$ and~$\nu_0$ are taken also positive, at this point we must say that the following results also hold for the academic cases~$\nu_1\le0$ and/or~$\nu_0\le0$. However, also from the stabilization point of view, the  more interesting cases correspond to $\nu_1>0$, which induces an anti-diffusion effect in the dynamics, thus inducing a source of instability, which we (may) need to counteract with a stabilizing control. Instead, the case ~$\nu_1<0$ would induce a diffusion stabilizing effect in the dynamics, thus making the stabilization task less challenging.
\end{remark}

As we have said, system~\eqref{sys-haty-KS-Intro} models flame propagation. We also investigate the Kuramoto--Sivashinsky model for fluid flow, which reads 
\begin{subequations}\label{sys-haty-KS-Intro-fluid}
  \begin{align}
 &\dot{\widehat \bfy}+   \nu_1\Delta^2\widehat \bfy+   \nu_2\Delta\widehat  \bfy+\nu_0\langle\widehat  \bfy\cdot\nabla\rangle\widehat  \bfy=\bff,\\
 &\widehat \bfy(0,\Bigcdot)=\widehat \bfy_0,\qquad\bfG \widehat \bfy\rest{\p\Omega}=\bfg,
\end{align}
where now~$\widehat \bfy$, $\widehat \bfy_0$, $\bff$, and~$\widehat \bfg$ are vector functions.
\end{subequations}
The analogs of~\eqref{sys-tildey-KS-Intro-Bu-fluid} and~\eqref{sys-y-KS-Intro-Bu} read
\begin{subequations}\label{sys-tildey-KS-Intro-Bu-fluid}
\begin{align}
 &\dot{\widetilde \bfy}+   \nu_1\Delta^2 \widetilde \bfy+   \nu_2\Delta \widetilde \bfy
 +\nu_0\langle\widetilde  \bfy\cdot\nabla\rangle\widetilde  \bfy=\bff+\sum_{j=1}^{M_\sigma}u_j\mathbf\Phi_j,\\
  &\widetilde \bfy(0,\Bigcdot)= \widetilde \bfy_0,\qquad\bfG  \widetilde \bfy\rest{\p\Omega}=\bfg,
\end{align} 
\end{subequations}
and
\begin{subequations}\label{sys-y-KS-Intro-Bu-fluid}
\begin{align}
 &\dot{\bfy}+   \nu_2\Delta^2  \bfy+   \nu_1\Delta \bfy
 +\nu_0\langle\widehat  \bfy\cdot\nabla\rangle  \bfy+\nu_0\langle \bfy\cdot\nabla\rangle\widehat  \bfy+\nu_0\langle \bfy\cdot\nabla\rangle \bfy
  =\sum_{j=1}^{M_\sigma}u_j\mathbf\Phi_j,\\
  &\bfy(0,\Bigcdot)= \bfy_0,\qquad\clG   \bfy\rest{\p\Omega}=0.
\end{align} 
\end{subequations}
Again, our task is to find~$u$ such that the solution of the nonautonomous
system~\eqref{sys-y-KS-Intro-Bu-fluid} satisfies the analog of~\eqref{goal-diff-exp}, for a suitable Hilbert space~$\bfV\subset L^2(\Omega)^d$,
\begin{align}%
 &\norm{\bfy(t,\Bigcdot)}{\bfV}\le C\ex^{-\mu t}\norm{\bfy_0}{\bfV},
 \qquad\mbox{for all}\qquad t\ge0.\label{goal-diff-exp-fluid}
\end{align}

\begin{remark}
We can see that, if~$\widehat y$ solves~\eqref{sys-haty-KS-Intro} (and is regular enough) then its gradient~$\widehat\bfy=\nabla\widehat y$ solves a system as~\eqref{sys-haty-KS-Intro-fluid}. However, we will use (vectors of) indicator functions of small subdomains as actuators to stabilize both models, hence the stabilizability result does not trivially transfer from one model to the other. 
\end{remark}

\subsection{The main results}\label{sS:intro-main-res}
We will show that,  for arbitrary given~$R>0$ and~$\lambda>0$, we can find a set of~$M_\sigma=M_\sigma(R)$~actuators
such that system~\eqref{sys-y-KS-Intro-Bu}
is exponentially stable with the feedback control function~$u(t)=u(t,y(t))\in\bbR^{M_\sigma}$, defined by
\begin{align}
 \sum_{j=1}^{M_\sigma}u_j(t)\Phi_j&\coloneqq K_{M}^{\lambda}(t,y(t))
\coloneqq P_{\clU_{M}}^{\clE_{M}^\perp}
 \left(   \nu_1\Delta y(t)
 +\tfrac{1}{2}\nu_0\norm{\nabla  y(t)}{\bbR^d}^2+\nu_0(\nabla  \widehat y(t),\nabla y(t))_{\bbR^d}-\lambda y(t)\right)\notag\\
&= P_{\clU_{M}}^{\clE_{M}^\perp}
 \left(   \nu_1\Delta y(t)
 +\tfrac{1}{2}\nu_0\norm{\nabla\widetilde  y(t)}{\bbR^d}^2-\tfrac{1}{2}\nu_0\norm{\nabla\widehat  y(t)}{\bbR^d}^2-\lambda y(t)\right)\label{FeedKy-N}
\end{align}
provided the initial condition is in the ball~$\{v\in V\mid\norm{v}{V}<R\}$.
Here~$P_{\clU_{M}}^{\clE_{M}^\perp}$  is the oblique projection in~$H\coloneqq L^2(\Omega)$ onto~$\clU_{M}$
along~$\clE_{M}^\perp$, where:
\begin{itemize}
 \item $\sigma\colon\bbN_+\to  \bbN_+$ is an appropriate strictly increasing sequence,
 \item $\clU_{M}\subset H$ is the linear span of our ${M_\sigma}\coloneqq\sigma(M)$ actuators,
 \item $\clE_{M}$ is the linear span of suitable~${M_\sigma}$ eigenfunctions 
 of the Laplacian~$\Delta$ satisfying the boundary conditions, and~$\clE_{M}^\perp$
 is the orthogonal space to~$\clE_{M}$, in~$H$.
\end{itemize}
More details on the choice of the spaces~$\clU_{M}$ and~$\clE_{M}$ will be given later on.

We shall use tools from~\cite{Rod20-eect}, devoted to second-order (heat-like) parabolic equations,
to derive our stabilizability result for the fourth-order Kuramoto--Sivashinsky parabolic equation.
The stabilizability result in~\cite{Rod20-eect} holds under an assumption on the uniform boundedness of the operator
norm~$\norm{P_{\clU_{M}}^{\clE_{M}^\perp}}{\clL(H)}\le C_P$ of the oblique projection, with~$C_P$ independent of~$M$; see~\cite[Cor.~2.9]{Rod20-eect}. The 
satisfiability of this assumption has been proven 
in~\cite{KunRod19-cocv,RodSturm20}
for both Dirichlet and Neumann
boundary conditions, where~$\clE_{M_\sigma}^\perp$
is spanned by eigenfunctions of the Laplacian~$\Delta$ under the respective boundary  conditions.

As in many works on Kuramoto--Sivashinsky equation, here we also consider the case of periodic boundary conditions. 
We will show that the oblique projection~$\norm{P_{\clU_{M}}^{\clE_{M}^\perp}}{\clL(H)}$ is well defined and the mentioned uniform boundedness is also satisfiable for periodic
boundary conditions. As we will see the proofs of these facts (cf. Lemmas~\ref{LA:DS} amd~\ref{LA:bddratP=suffalpha}) use elementary tools but are nontrivial.

Our stabilizability result for the Kuramoto--Sivashinsky equations will hold for both periodic, bi-Dirichlet and bi-Neumann boundary conditions. 

At this point we must clarify our terminology for bi-Dirichlet and bi-Neumann boundary conditions concerning fourth-order parabolic equations. By bi-Dirichlet boundary conditions we mean~$u\rest{\p\Omega}=0=\Delta u\rest{\p\Omega}$ and by bi-Neumann boundary conditions we mean~$\tfrac{\p u}{\p\bfn}\rest{\p\Omega}=0=\tfrac{\p \Delta u}{\p\bfn}\rest{\p\Omega}$. 
In the literature, we can find that there are some combinations of Dirichlet-like and Neumann-like boundary conditions that we can use to deal with fourth order parabolic equations involving the bi-Laplacian $\Delta^2$ as higher order term; see~\cite{GanderLiu17,Begehr06}. For example, the bi-Dirichlet and bi-Neumann boundary conditions we use here do correspond to those in~\cite[Eqs.~(5) and~(6)]{GanderLiu17}.
We refer to~\cite{CerpaGuzMerc17,Gao19} where ``Dirichlet'' and ``Neumann'' boundary conditions have different meanings than  bi-Dirichlet and bi-Neumann. 
Finally, we refer to~\cite{KalogKeavPapa14,AmbroseMazzucato21} where the case of periodic boundary conditions is investigated.

We will also show the analogous stabilizability result  holds for system~\eqref{sys-y-KS-Intro-Bu-fluid} with the analogous feedback control function~$u(t)=u(t,y(t))\in\bbR^{dM_\sigma}$, defined by
\begin{align}
 \sum_{j=1}^{dM_\sigma}u_j(t)\mathbf\Phi_j
 &\coloneqq\bfK_{M}^{\lambda}(t,y(t))\notag\\
 &\coloneqq P_{\underline\clU_{M}}^{\underline\clE_{M}^\perp}
 \left(   \nu_1\Delta \bfy(t)
 +\nu_0\langle\widehat  \bfy\cdot\nabla\rangle  \bfy+\nu_0\langle \bfy\cdot\nabla\rangle\widehat  \bfy+\nu_0\langle \bfy\cdot\nabla\rangle \bfy
  -\lambda \bfy(t)\right)\notag\\
&= P_{\underline\clU_{M}}^{\underline\clE_{M}^\perp}
 \left(   \nu_1\Delta \bfy(t)
 +\nu_0\langle\widetilde  \bfy\cdot\nabla\rangle\widetilde  \bfy-\nu_0\langle \widehat\bfy\cdot\nabla\rangle\widehat  \bfy-\lambda \bfy(t)\right).\label{FeedKy-N-fluid}
\end{align}
Here, the set of actuators spanning~$\underline\clU_{M}$ is given by
\begin{align}
&\mathbf\Phi_i\coloneqq \Phi_i, &&  && &&\mbox{for $d=1$};\notag\\
&\mathbf\Phi_i\coloneqq (\Phi_i,0), &&\mathbf\Phi_{M_\sigma+i}\coloneqq
(0,\Phi_i),\quad  && &&\mbox{for $d=2$};\notag\\
&\mathbf\Phi_i\coloneqq (\Phi_i,0,0), &&\mathbf\Phi_{M_\sigma+i}\coloneqq
(0,\Phi_i,0), &&\mathbf\Phi_{2M_\sigma+i}\coloneqq
(0,0,\Phi_i),\quad   &&\mbox{for $d=3$};\notag
\end{align}
with~$1\le i\le M_\sigma$, where the~$\Phi_i$s are the actuators used in~\eqref{FeedKy-N}.
Analogously, the set of (vector) eigenfunctions spanning~$\underline\clE_{M}$, is given by
\begin{align}
&\bfe_i\coloneqq e_i, &&  && &&\mbox{for $d=1$};\notag\\
&\bfe_i\coloneqq (e_i,0), &&\bfe_{M_\sigma+i}\coloneqq
(0,e_i),\quad  && &&\mbox{for $d=2$};\notag\\
&\bfe_i\coloneqq (e_i,0,0), &&\bfe_{M_\sigma+i}\coloneqq
(0,e_i,0), &&\bfe_{2M_\sigma+i}\coloneqq
(0,0,e_i),\quad   &&\mbox{for $d=3$};\notag
\end{align}
where the~$e_i$s are the scalar eigenfunctions used in~\eqref{FeedKy-N}.

Finally, we present simulations showing the stabilizing performance of the feedback control to/around targeted trajectories (depending on both space and time), for both  flame propagation and fluid flow models. Here we restrict ourselves to the one-dimensional case under periodic boundary conditions. For the spatial discretization we use spectral elements, that is, we compute the solution of standard spectral Galerkin approximations based on ``the'' first ~$N$ eigenfunctions.

\subsection{Main strategy}
To derive the stabilizability result, we write the  dynamics of the dynamics of the difference to the target, in control systems~\eqref{sys-y-KS-Intro-Bu} and~\eqref{sys-y-KS-Intro-Bu-fluid}, as an appropriate abstract semilinear parabolic-like equation, which enable us to show that all the assumptions on the state operators required in~\cite[sect.~3.1]{Rod20-eect} are satisfied. Then, to show that the assumptions on the feedback control required in~\cite[sect.~3.3]{Rod20-eect} are satisfied for bi-Dirichlet and for bi-Neumann boundary conditions, we exploit the fact that for such boundary conditions, the eigenfunctions of the  Kuramoto--Sivashinsky linear operator~$  \nu_2\Delta^2+   \nu_1\Delta$ coincide with those of the Laplacian~$\Delta$ with Dirichlet and Neumann boundary conditions, and use results available on the literature concerning, in particular, a suitable uniform bound for the oblique projection, in~\eqref{FeedKy-N} and~\eqref{FeedKy-N-fluid}, as we increase the number of actuators. Such uniform bound is not available in the literature for the case of periodic
boundary conditions and thus we prove it in section~\ref{sS:feedtriple}; see Lemma~\ref{LA:bddratP=suffalpha}.

\subsection{On the literature}\label{sS:litter}
Most articles devoted to control properties of the Kuramoto-Sivashinsky equation have dealt with the one-dimensional case $d=1$. We refer to \cite{ArmaouChristofides00} and \cite{ChristofidesArmaou00} studying the internal feedback control on a periodic domain and to~\cite{KangFridman18} where stabilization is investigated under ``Dirichlet'' boundary conditions where the actuators cover the entire spatial domain. We also find 
\cite{Lhachemi21-arx} and \cite{LiuKrstic01} where boundary controls are considered to study stabilizability properties. Regarding controllability, in \cite{Guo02} the author proved the boundary null-controllability of a related fourth-order parabolic equation using two boundary inputs. In \cite{Cerpa10,CerpaGuzMerc17} the null-controllability with one input only is studied. The same properties for the nonlinear Kuramoto-Sivashinsky equation are addressed in \cite{CerpaMercado11,Gao19,Takahashi17}. More recently, other control issues as cost of control \cite{CarrenoGuzman16}, time delayed inputs \cite{GuzmanMarxCerpa19,Chentouf21}, hierarchical control \cite{CarrenoSantos19}  have been considered in the literature, among others.

In dimension higher or equals to two, the number of results in the literature is smaller. However, several researchers have been interested in the subject.  We find the recent works \cite{GuerreroKassab19} and \cite{Kassab20} dealing with the internal null-controllability of the Kuramoto--Sivashinsky equation. The main difficulty raised in those papers is the proof of a Carleman estimate which is very useful to get both linear and nonlinear results. The internal null-controllability obtained in these work for particular boundary conditions and it is not trivial to deal with different ones. Our stabilizability result do not use/need the null-controllability property, still, attention must be paid to the type of boundary conditions as well.

The work~\cite{Takahashi17} mentioned above also contains a boundary controllability result for a plane rectangular domain with a control~$u$ ``supported'' on a suitable portion~$\Gamma$ of the boundary~$\p\Omega$, $u(t)\in L^2(\Gamma)$.

The stabilization result in~\cite{KangFridman18} mentioned above has been extended recently to the two-dimensional case $d=2$ in~\cite{KangFridman22} where again the actuators cover the entire spatial domain~$\Omega$, in particular, the total volume covered by the actuators equals the volume of~$\Omega$. In our results the total volume covered by the actuators can be arbitrarily small and fixed  a priori.

Many results in the literature are concerned with the stabilization to the trivial trajectory~$\widehat y(x,t)=0$, thus for the case of vanishing external forces~$f(x,t)=g(x,t)=0$. Here we consider a rather general targeted trajectory~$\widehat y(x,t)$, thus allowing nonvanishing external forces depending on both time and space variables.

\subsection{Contents and general notation}\label{sS:notation}
The rest of the paper is organized as follows. The stabilizability result for the (scalar) flame propagation model, by means of an explicit oblique projection based feedback is shown in section~\ref{S:stabil-fire} and the analogous result for the (vector) fluid flow model is presented in section~\ref{S:stabil-fluid}. In those sections we also give the explicit spatial location of the actuators, which is (for a fixed number of actuators) independent of the data~$(\nu_2,\nu_1,\nu_0,f,g,\clG,\widehat y_0,\widetilde y_0)$. Finally, section~\ref{S:simul} gathers results of numerical simulations showing the stabilizing performance of the explicit oblique projections  feedback control.

\medskip

Concerning the notation, we write~$\bbR$ and~$\bbN$ for the sets of real numbers and nonnegative
integers, respectively, and we define $\bbR_+\coloneqq(0,\,+\infty)$ and~$\overline{\bbR_+}\coloneqq[0,\,+\infty)$, and $\mathbb
N_+\coloneqq\mathbb N\setminus\{0\}$.

For an open interval $I\subseteq\bbR$ and two Banach spaces~$X,\,Y$, we write
$W(I,\,X,\,Y)\coloneqq\{y\in L^2(I,\,X)\mid \dot y\in L^2(I,\,Y)\}$,
where~$\dot y\coloneqq\frac{\ed}{\ed t}y$ is taken in the sense of
distributions. This space is endowed with the natural norm
$|y|_{W(I,\,X,\,Y)}\coloneqq\bigl(|y|_{L^2(I,\,X)}^2+|\dot y|_{L^2(I,\,Y)}^2\bigr)^{1/2}$.

If the inclusions $X\subseteq Z$ and~$Y\subseteq Z$ are continuous, where~$Z$ is a Hausdorff topological space,
then we can define the Banach spaces $X\times Y$, $X\cap Y$, and $X+Y$,
endowed with the norms defined as
$|(a,\,b)|_{X\times Y}:=\bigl(|a|_{X}^2+|b|_{Y}^2\bigr)^{\frac{1}{2}}$,
$|a|_{X\cap Y}:=|(a,\,a)|_{X\times Y}$, and
$|a|_{X+Y}:=\inf_{(a^X,\,a^Y)\in X\times Y}\bigl\{|(a^X,\,a^Y)|_{X\times Y}\mid a=a^X+a^Y\bigr\}$,
respectively.
In case we know that $X\cap Y=\{0\}$, we say that $X+Y$ is a direct sum and we write $X\oplus Y$ instead.

If the inclusion
$X\subseteq Y$ is continuous, we write $X\xhookrightarrow{} Y$. We write
$X\xhookrightarrow{\rm d} Y$, respectively $X\xhookrightarrow{\rm c} Y$, if the inclusion is also dense, respectively compact.

The space of continuous linear mappings from~$X$ into~$Y$ is denoted by~$\clL(X,Y)$. In case~$X=Y$ we 
write~$\clL(X)\coloneqq\clL(X,X)$.
The continuous dual of~$X$ is denoted~$X'\coloneqq\clL(X,\bbR)$.

The space of continuous functions from~$X$ into~$Y$ is denoted by~$\clC(X,Y)$, and its subspace of
increasing functions, defined in~$\overline{\bbR_0}$ and vanishing at~$0$, by:
\begin{equation}\notag
 \clC_{0,\rm i}(\overline{\bbR_0},\bbR) \coloneqq \{\mathfrak n\!\mid \mathfrak n\in \clC(\overline{\bbR_0},\bbR),
 \quad\!\! \mathfrak n(0)=0,\quad\!\!\mbox{and}\quad\!\!
 \mathfrak n(\varkappa_2)\ge\mathfrak n(\varkappa_1)\;\mbox{ if }\; \varkappa_2\ge \varkappa_1\ge0\}.
\end{equation}
Next,  we denote by~$\clC_{\rm b, i}(X, Y)$ the vector subspace 
\begin{equation}\notag
 \clC_{\rm b, i}(X, Y)\coloneqq
 \left\{f\in \clC(X,Y) \mid \exists\mathfrak n\in \clC_{0,\rm i}(\overline{\bbR_0},\bbR)\;\forall x\in X:\;
\norm{f(x)}{Y}\le \mathfrak n (\norm{x}{X})
\right\}.
\end{equation}

Given a subset~$S\subset H$ of a Hilbert space~$H$, with scalar product~$(\Bigcdot,\Bigcdot)_H$, the orthogonal complement of~$S$ is
denoted~$S^\perp\coloneqq\{h\in H\mid (h,s)_H=0\mbox{ for all }s\in S\}$.

For a given open subset~$\Omega\subseteq\bbR^d$, $d\in\bbN_+$, we shall use the usual notation, $L^p(\Omega)$ and~$W^{k,p}(\Omega)=\left\{f\in L^p(\Omega)\mid\frac{\p^i t}{\p x_1^{\bfi_1}...\p x_d^{\bfi_d}}\in L^p(\Omega),\; \bfi\in\bbN^d,\;\bfi_1+\dots+\bfi_d=i\le k\right\}$,  for the Lebesgue and Sobolev spaces, where $p\ge 1$ and $k\in\bbN_+$.

Given a sequence~$(a_j)_{j\in\{1,2,\dots,n\}}$ of real constants, $n\in  \bbN_+$, $a_i\ge0$, we
denote~$\|a\|\coloneqq\max\limits_{1\le j\le n} a_j$. Further, by
$\overline C_{\left[a_1,\dots,a_n\right]}$ we denote a nonnegative function that
increases in each of its nonnegative arguments.

Finally, $C,\,C_i$, $i=0,\,1,\,\dots$, stand for unessential positive constants.

\section{Stabilizability for the flame propagation model}\label{S:stabil-fire}
We show that
system~\eqref{sys-y-KS-Intro-Bu} with the feedback control~\eqref{FeedKy-N} can be written as an abstract parabolic-like evolution equation satisfying the assumptions in~\cite{Rod20-eect}. Then we apply the results in~\cite{Rod20-eect} to conclude the
stabilizability result for~\eqref{sys-y-KS-Intro-Bu}. Hereafter, for bi-Diriclet and bi-Neumann boundary conditions, we assume that $\Omega$ is a rectangular domain $\Omega=\bigtimes\limits_{i=1}^d(0,L_i)$. For periodic boundary conditions, $\Omega$ is the Torus $\Omega=\bbT^d_L\coloneqq\bigtimes\limits_{i=1}^d\tfrac{L_i}{2\pi}\bbT^1\sim\bigtimes\limits_{i=1}^d[0,L_i)$, where $\bbT^1\sim\{(r_1,r_2)\in\bbR^2\mid r_1^2+r_2^2=1\}\sim[0,2\pi)$ is the one dimensional Torus.

\subsection{Kuramoto--Sivashinsky equation in abstract form}\label{sS:KS-abstract}

Let us denote~$H\coloneqq L^2(\Omega)$, which we consider as a pivot space, $H=H'$, as usual.

For the Kuramoto--Sivashinsky equation, for regular enough functions, we define the bi-Dirichlet and bi-Neumann boundary conditions as
\begin{align}
 &\clG_{\rm Dir}y\rest{\p\Omega}=0,\quad\mbox{with}\quad\clG_{\rm Dir}y\coloneqq(y,\Delta y);\notag\\
 &\clG_{\rm Neu}y\rest{\p\Omega}=0,\quad\mbox{with}\quad\clG_{\rm Neu}y\coloneqq(\bfn\cdot \nabla y,\bfn\cdot \nabla\Delta y).\notag
\end{align}
For periodic ``boundary'' conditions, we simply assume that our
equation evolves in the Torus $\bbT^d_L$, so that in fact we have
no boundary conditions at all.

\begin{remark}
 Note that the perimeter of~$\bbT^1$ is equal to~$2\pi$.
Thus considering the evolution on~$\bbT^d_L$ implies that the evolution when seen in~$\bbR^d$ is $L$-periodic, that is, we are assuming the periodicity with
period~$L_n$ in the $n$th coordinate, 
\[
y(x)=y(\hat x), \quad\mbox{if}\quad \norm{x_n-\hat x_n}{\bbR}\in L_n\bbN\quad\mbox{for all}\quad 1\le n\le d,
\]
for any given~$(x,\hat x)\in\bbR^d\times\bbR^d$.
\end{remark}

Let us denote by~$\clA=-\Delta+\Id$ the shifted negative Laplacian under either Dirichlet, Neumann, or periodic boundary conditions, which can be defined as
\[
\langle\clA z, w\rangle_{\clV',\clV}\coloneqq(\nabla z,\nabla w)_{H^d}+(z,w)_{H},
\]
defining an isometry $\clA\colon\clV\to\clV'$, where we recall
\begin{align}
 \clV&\coloneqq\{h\in W^{1,2}(\Omega)\mid h\rest{\p\Omega}=0\},\qquad&&\mbox{for}\quad \clG=\clG_{\rm Dir},\notag\\
 \clV&\coloneqq W^{1,2}(\Omega),\qquad&&\mbox{for}\quad \clG=\clG_{\rm Neu},\notag\\
 \clV&\coloneqq W^{1,2}(\bbT^2_L),\qquad&&\mbox{for}\quad \clG=\clG_{\rm Per},\notag
\end{align}
where the subscripts~`${\rm Dir}$', `${\rm Neu}$', and~`${\rm Per}$' underline the type of boundary conditions under consideration, with corresponding domains~$\rmD(\clA)\coloneqq\{h\in \clV\mid \clA h\in H\}$ given by
\begin{align}
 \rmD(\clA)&=\{h\in W^{2,2}(\Omega)\mid h\rest{\p\Omega}=0\},\qquad&&\mbox{for}\quad \clG=\clG_{\rm Dir},\notag\\
 \rmD(\clA)&=\{h\in W^{2,2}(\Omega)\mid \bfn\cdot\nabla h\rest{\p\Omega}=0\},\qquad&&\mbox{for}\quad \clG=\clG_{\rm Neu},\notag\\
\rmD(\clA)&= W^{2,2}(\bbT^2_L),\qquad&&\mbox{for}\quad \clG=\clG_{\rm Per}.\notag
\end{align}

We endow~$\clV$ with the standard scalar product inherited from~$W^{1,2}(\Omega)$,
\[
(z,w)_\clV\coloneqq\langle\clA z, w\rangle_{\clV',\clV}=(\nabla z,\nabla w)_{H^d}+(z,w)_{H},
\]
with~$H^d\coloneqq\bigtimes_{n=1}^d H=\bigtimes_{n=1}^d L^2(\Omega)$. It follows that~$\clA$ is a linear continuous bijection from~$\rmD(\clA)$ onto~$H$ and has a compact inverse~$\clA^{-1}\colon H\mapsto H$.
By choosing a complete basis of eigenpairs~$(\alpha_i,e_i)\in\bbR\times H$,
\[
\clA e_i=\alpha_ie_i,\qquad i\in\bbN_+,
\]
we can define the fractional powers of the self adjoint operator~$\clA$: given~$s\in\bbR$, we say that~$h\in\rmD(\clA^s)$ if
\[
\clA^sh\coloneqq{\textstyle\sum\limits_{i=1}^{+\infty}}\alpha_i^se_i\in H. 
\]
The space~$\rmD(\clA^s)$ is assumed to be endowed with the scalar product
\[
(z,w)_{\rmD(\clA^s)}\coloneqq(\clA^sz,\clA^sw)_H
\]
and corresponding norm. For~$s\ge0$ we have that~$\rmD(\clA^s)\subseteq H$ and~$\rmD(\clA^s)=\{h\in H\mid \clA^s \in H\}$; for~$s<0$, $\rmD(\clA^s)\supset H$ is the completion of~$H$ in the~$\rmD(\clA^s)$-norm. Observe also that~$\rmD(\clA^{-s})\coloneqq\rmD(\clA^{s})'$, and that
$\clA^s$ is an isometry from~$\rmD(\clA^{r})$ onto~$\rmD(\clA^{r-s})$ for every~$(r,s)\in\bbR^2$.
In particular, we find that
$\rmD(\clA^2)=\{h\in H\mid \clA h\in\rmD(\clA)\}$.
Hence
\begin{align}
 \rmD(\clA^2)&=\{h\in W^{2,2}(\Omega)\mid \clA h\in W^{2,2}(\Omega),\quad (h,\clA h)\rest{\p\Omega}=(0,0)\},&&\mbox{ for}\quad \clG=\clG_{\rm Dir},\notag\\
 \rmD(\clA^2)&=\{h\in W^{2,2}(\Omega)\mid \clA h\in W^{2,2}(\Omega),\quad \bfn\cdot \nabla (h,\clA h)\rest{\p\Omega}=(0,0)\},&&\mbox{ for}\quad \clG=\clG_{\rm Neu},\notag\\
\rmD(\clA^2)&= \{h\in W^{2,2}(\bbT^2_L),\mid \clA h\in W^{2,2}(\bbT^2_L)\}&&\mbox{ for}\quad \clG=\clG_{\rm Per}.\notag
\end{align}
Further, since~$\clA h=(-\Delta +\Id)h$ for $h\in\rmD(A)$,
we can write
\begin{align}
 \rmD(\clA^2)&=\{h\in W^{2,2}(\Omega)\mid \Delta h\in W^{2,2}(\Omega),\quad \clG h\rest{\p\Omega}=0\},&&\mbox{ for}\quad \clG\in\{\clG_{\rm Dir},\clG_{\rm Neu}\},\notag\\
\rmD(\clA^2)&= \{h\in W^{2,2}(\bbT^2_L),\mid \Delta h\in W^{2,2}(\bbT^2_L)\}&&\mbox{ for}\quad \clG=\clG_{\rm Per}.\notag
\end{align}

Observe that for~$(z,w)\in \rmD(\clA)\times \rmD(\clA)$
\begin{align*}
\langle \clA^2 z,w\rangle_{\rmD(\clA)',\rmD(\clA)}&=(\clA z,\clA w)_H= ((-\Delta+\Id)z,(-\Delta+\Id)w)_H\notag\\
&=(\Delta z,\Delta w)_H+2(\nabla z,\nabla w)_{H^d}+(z,w)_H.\notag
\end{align*}

For convenience, we denote~$V\coloneqq\rmD(\clA)$ and~$A=\nu_2\clA^2$, 
\begin{equation}\label{scpV}
(z,w)_V\coloneqq \nu_2(\Delta z,\Delta w)
+2\nu_2(\nabla z,\nabla w) +\nu_2(z,w), 
\end{equation}
where~$\nu_2>0$ is as in system~\eqref{sys-haty-KS-Intro}. 
The domain of the operator~$A$,
\begin{subequations}\label{absOper}
\begin{equation}\label{absOper-AinV}
A\in\clL(V,V'),\qquad \langle A z,w\rangle_{V',V}\coloneqq (z,w)_V,
\end{equation}
is given by~$\rmD(A)=\rmD(\clA^2)$. Observe also that
the following relations
\begin{align}
(Az,w)_H&=\langle A z,w\rangle_{V',V}=\nu_2(\Delta z,\Delta w)_H
+2\nu_2(\nabla z,\nabla w) +\nu_2(z,w)\notag\\
&=-\nu_2(\nabla\Delta z,\nabla w)_H
-2\nu_2(\Delta z, w)_H +\nu_2(z,w)_H=(\nu_2\Delta^2 z-2\nu_2\Delta z+z, w)_H,\notag
\end{align}
hold for $(z,w)\in\rmD(A)\times V$ and for both periodic, bi-Dirichlet, and bi-Neumann boundary conditions.
From~$V\xhookrightarrow{\rmd}H$
and from~$\nu_2\Delta^2 z-2\nu_2\Delta z+\nu_2z\in H$,  we can conclude that~$(Az,w)_H=(\nu_2\Delta^2 z-2\nu_2\Delta z+\nu_2z, w)_H$ for all~$w\in H$, which gives us
\begin{equation}\notag%
 Ay= \nu_2 \Delta^2y-2\nu_2\Delta y+\nu_2y\quad\mbox{for}\quad y\in\rmD(A),
\end{equation}

We also set the state operators
\begin{align}
  A_{\rm rc}y&\coloneqq (\nu_1 +2\nu_2) \Delta y-\nu_2y+\nu_0(\nabla  \widehat y,\nabla y)_{\bbR^d},\\
N(y)&\coloneqq\tfrac{1}{2}\nu_0\norm{\nabla  y}{\bbR^d}^2.
\end{align}

Finally, by setting the feedback operator~\eqref{FeedKy-N}, which now reads
\begin{align}
 y\mapsto K_{M}^{\lambda}(t,y)&= P_{\clU_{M}}^{\clE_{M}^\perp}
 \bigl(\nu_1\Delta y +\nu_0(\nabla  \widehat y,\nabla y)_{\bbR^d}+N(y)-\lambda y\bigr),\notag\\
 &= P_{\clU_{M}}^{\clE_{M}^\perp}
 \bigl(Ay+A_{\rm rc}y+N(y)-(\nu_2\Delta^2+\lambda\Id) y\bigr)
 \label{FeedKy-N-abs}
\end{align}
\end{subequations}
we can write~\eqref{sys-y-KS-Intro-Bu} as the evolutionary parabolic-like equation
\begin{align}\label{sys-y-KS-Bu}
 &\dot{y}+A  y+ A_{\rm rc} y
 +N(y)
  =K_{M}^{\lambda}(t,y),\qquad
  y(0)= y_0.
\end{align}

\subsection{Actuators and auxiliary eigenfunctions}\label{sS:acteigf}
Recall our spatial rectangular domain: $\Omega=\bigtimes_{n=1}^d(0,L_n)$ for bi-Dirichlet and bi-Neumann boundary conditions, and $\Omega=\bbT^d_L\sim\bigtimes_{n=1}^d[0,L_n)$ for periodic boundary conditions.

As actuators we take indicator functions of small subdomains as in~\cite[Sect.~2.2]{Rod20-eect}; see also~\cite[Sects.~4.5 and 4.8.1]{KunRod19-cocv} and~\cite[Sect.~2.2]{RodSturm20}. Namely, for a given $r\in(0,1)$ and for a given integer $M\ge 1$ we consider, for each~$n\in\{1,...,d\}$ the intervals
 \begin{equation}\label{omeM1D}
     \omega_{n,j}^{M} = (c_{n,j}^{M} - \tfrac{rL_n}{2M}, c_{n,j}^{M} + \tfrac{rL_n}{2M})\subset(0,L_n) \qquad \mbox{for}\quad \quad j\in\{1,\ldots,M\},
 \end{equation}
each of length $\frac{rL_n}{M}$, with centers $c_{n,1}^{M},\ldots,c_{n,M}^{M} $  given by
 \begin{equation}\label{cM}
     c_{n,j}^{M}\coloneqq \tfrac{(2j-1)L_n}{2M}, \qquad \mbox{for}\quad  j\in\{1,\ldots,M\}.
 \end{equation}
Then, we consider the subdomains~$\omega_{\bfj}^{M}\subset\Omega$ given by the following Cartesian products
 \begin{equation}\notag
  \omega_{\bfj}^{M}\coloneqq  \bigtimes\limits_{n=1}^d \omega_{n,\bfj_n}^{M}  \qquad\mbox{for}\quad \bfj\in\{1,\ldots,M\}^d.
 \end{equation}
As actuators we take the indicator functions of these subdomains~$\Phi_{M,\bfj}\coloneqq\indf_{\omega_{\bfj}^{M}}$. That is, as our set~$U_M$ of actuators (cf.~\eqref{FeedKy-N}) we take 
\begin{equation}\label{UM-explicit}
U_{M}\coloneqq \{\indf_{\omega_{\bfj}^{M}}\mid \bfj\in\{1,2,\ldots,M\}^d\},\quad
\clU_M\coloneqq\linspan U_M,\quad \dim\clU_M=M^d.
\end{equation}

Finally, again following~\cite[Sect.~2.2]{Rod20-eect}, as the set of auxiliary eigenfunctions  we take Cartesian products as follows (see also~\cite[Sects.~4.5 and 4.8.1]{KunRod19-cocv}).
For each~$n\in\{1,...,d\}$ we take the set of the first  eigenfunctions of the Laplacian operator in the interval~$(0,L_n)$, under the corresponding boundary conditions, where the complete system of eigenfunctions is given by the sequence~$(e_{n,i})_{i\in\bbN_+}$ with
\begin{subequations}\label{eigf}
 \begin{align}
& e_{n,i}(x_n)\coloneqq\sin(\tfrac{i\pi x_n}{L_n}),&&\mbox{ for }\clG=\clG_{\rm Dir},\\
& e_{n,i}(x_n)\coloneqq\cos(\tfrac{(i-1)\pi x_n}{L_n}),&&\mbox{ for }\clG=\clG_{\rm Neu},\\
& e_{n,i}(x_n)\coloneqq \begin{cases}
\cos(\tfrac{(i-1)\pi x_n}{L_n}),&\mbox{ if $i$ is odd},\\
\sin(\tfrac{i\pi  x_n}{L_n}),&\mbox{ if $i$ is even},
\end{cases}
&&\mbox{ for }\clG=\clG_{\rm per},
 \end{align}
\end{subequations}
Note that these eigenfunctions of the Laplacian operator are also eigenfunctions for the bi-Laplacian (under the corresponding ``bi'' type of boundary conditions), and form a complete orthogonal basis in~$L^2((0,L_n),\bbR)$. 
Then, we can obtain a complete orthogonal basis of eigenfunctions of the bi-Laplacian in~$\Omega$, by  taking
 \begin{equation}\notag
  e_{\bfj}^\times\coloneqq  \bigtimes\limits_{n=1}^d e_{n,\bfj_n},  \qquad\mbox{with}\quad  \bfj=(\bfj_1,\dots,\bfj_d)\in\bigtimes\limits_{n=1}^d\bbN_+.
 \end{equation}

Finally, as  auxiliary eigenfunctions of the bi-Laplacian in~$\Omega$ we take
 \begin{equation}\notag
  e_{\bfj}^{M}\coloneqq  \bigtimes\limits_{n=1}^d e_{n,\bfj_n} , \qquad\mbox{with}\quad \bfj\in\bigtimes\limits_{n=1}^d\{1,\ldots,M\}.
 \end{equation}
That is, for the set of auxiliary eigenfunctions (cf.~\eqref{FeedKy-N})  we set
\begin{equation}\label{EM-explicit}
E_{M}\coloneqq \{e_{\bfj}^{M}\mid \bfj\in\{1,2,\ldots,M\}^d\},\quad
\clE_M\coloneqq\linspan E_M,\quad \dim\clE_M=M^d.
\end{equation}

\subsection{The main result}

Concerning the flame propagation model, the main stabilizability result of this manuscript is the following.
\begin{theorem}\label{T:Main}
System~\eqref{sys-y-KS-Bu} with the state and feedback operators in~\eqref{absOper} is semiglobally exponentially stable. More precisely, for arbitrary given~$\mu>0$ and~$R>0$, there exists~$M\in\bbN_+$, $0<\mu\le\lambda$, and~$C\ge1$ such that the solution satisfies
\[
\norm{y(t)}{V} \le C\rme^{-\mu t} \norm{y_0}{V},\quad\mbox{for all}\quad t\ge0 \quad\mbox{and all}\quad y_0\in V\mbox{ with } \norm{y_0}{V}<R.
\]
\end{theorem}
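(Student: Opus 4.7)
The plan is to apply the abstract semiglobal stabilization result of~\cite{Rod20-eect} to the evolution equation~\eqref{sys-y-KS-Bu}. That framework yields an exponential decay bound of exactly the form stated in the theorem once a short list of hypotheses on the state triple $(A, A_{\rm rc}, N)$ and on the actuator/eigenfunction pair $(\clU_M, \clE_M)$ is verified, so the proof reduces to checking that these hypotheses are in force in the current fourth-order setting. The structure will therefore be: (i) verify the state-operator assumptions of~\cite[Sect.~3.1]{Rod20-eect}; (ii) verify the feedback assumptions of~\cite[Sect.~3.3]{Rod20-eect}, with the uniform oblique-projection bound as the crux; (iii) invoke the corresponding abstract theorem.

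For (i), the principal part $A=\nu_2\clA^2$ is positive, self-adjoint, and has compact resolvent, with $D(A^{1/2})$ identifiable as a subspace of $W^{2,2}(\Omega)$ (with the appropriate boundary behaviour) and with $V=D(A^{1/2})$ in the sense of equivalent norms via~\eqref{scpV}. The reaction-convection term $A_{\rm rc}$, being of order at most two in space with coefficients controlled by the target $\widehat y$, defines a bounded map $V\to V'$ whose norm depends only on a fixed-in-time functional of $\widehat y$; this matches the class of lower-order perturbations admitted by~\cite{Rod20-eect}. For the nonlinearity $N(y)=\tfrac12\nu_0|\nabla y|_{\bbR^d}^2$, Sobolev embedding together with the interpolation $|\nabla y|_{L^4}\lesssim |y|_{V}^{1-\theta}|y|_{D(A)}^{\theta}$ for a suitable $\theta\in(0,1)$ yields the required $\clC_{\rm b,i}$-type bounds for both $N$ and its Fréchet derivative.

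For (ii), the actuators $\Phi_{M,\bfj}=\indf_{\omega_{\bfj}^M}$ from~\eqref{UM-explicit} and the eigenfunctions $e_{\bfj}^M$ of $\Delta$ from~\eqref{eigf}--\eqref{EM-explicit} are precisely the configuration used in the second-order analysis of~\cite{KunRod19-cocv,RodSturm20,Rod20-eect}. The decisive input demanded by the abstract theorem is the uniform operator-norm estimate
\[
\norm{P_{\clU_{M}}^{\clE_{M}^\perp}}{\clL(H)}\le C_P\qquad\text{independently of }M,
\]
together with the fact that $e_{\bfj}^M$ are eigenfunctions of the Kuramoto--Sivashinsky linear operator under the matching boundary data (this last point holds because, under bi-Dirichlet, bi-Neumann and periodic data, the bi-Laplacian shares its eigenfunctions with $\Delta$). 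For bi-Dirichlet and bi-Neumann boundary conditions, the uniform bound is supplied by~\cite{KunRod19-cocv,RodSturm20}. For periodic boundary conditions, the bound is established in the appendix, cf.~Lemmas~\ref{LA:DS} and~\ref{LA:bddratP=suffalpha}, where the former guarantees that $H=\clU_M\oplus\clE_M^\perp$ so that $P_{\clU_M}^{\clE_M^\perp}$ is well defined and the latter yields the $M$-uniform bound.

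The main obstacle is this periodic oblique-projection estimate, which is absent from the existing literature; the second-order arguments of~\cite{KunRod19-cocv,RodSturm20} do not carry over verbatim because in the periodic case the eigenfunctions come as pairs $\{\cos,\sin\}$, which couples adjacent frequencies and prevents a direct reduction to a diagonal situation. Once this bound is in hand, the conclusion is automatic: fix $R>0$ and $\mu>0$, apply~\cite[Thm.~4.1]{Rod20-eect} (the semiglobal exponential stabilization result there) to the triple $(A,A_{\rm rc},N)$ and to the actuator family $\{\Phi_{M,\bfj}\}$ to obtain $M\in\bbN_+$ and $\lambda\ge\mu$ so that the feedback $K_M^\lambda$ in~\eqref{FeedKy-N-abs} drives~\eqref{sys-y-KS-Bu} into the regime $\norm{y(t)}{V}\le C\rme^{-\mu t}\norm{y_0}{V}$ for every initial datum $y_0$ in the $V$-ball of radius $R$, which is exactly the claim of Theorem~\ref{T:Main}.
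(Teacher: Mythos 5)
Your proposal follows essentially the same route as the paper's proof: verify the state-operator assumptions (Lemmas~\ref{LA:A0sp}--\ref{LA:NN}) and the feedback-triple assumptions (Lemmas~\ref{LA:FFM}--\ref{LA:bddratP=suffalpha}), with the $M$-uniform bound on $P_{\clU_M}^{\clE_M^\perp}$ under periodic boundary conditions as the genuinely new ingredient, and then invoke~\cite[Thm.~4.1]{Rod20-eect}. The only small imprecision is that the framework requires $A_{\rm rc}\in\clL(V,H)$ rather than merely $\clL(V,V')$, which your ``order at most two with coefficients controlled by $\widehat y$'' argument in fact delivers since $V\hookrightarrow W^{2,2}(\Omega)$.
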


\medskip\noindent

\subsection{On the state operators}
We will need the following boundedness assumption on the targeted trajectory.
\begin{assumption}\label{A:haty}
We have that~$\norm{\nabla  \widehat y}{L^\infty(\bbR_+,L^3(\Omega)^d)}\le C_{\widehat y}<+\infty$.
\end{assumption}

We shall show that the state operators~$A_{\rm rc}$, and~$\clN$,  satisfy the assumptions
as in~\cite[Sect.~3.1]{Rod20-eect}. For that, it is enough to prove the following Lemmas~\ref{LA:A0sp}--\ref{LA:NN} corresponding to the assumptions on the state operators we find in~\cite[Assumps.~3.1--3.4]{Rod20-eect}.

\begin{lemma}\label{LA:A0sp}
 $A\in\clL(V,V')$ is an isomorphism from~$V$ onto~$V'$, $A$ is symmetric, and $(y,z)\mapsto\langle Ay,z\rangle_{V',V}$
 is a complete scalar product on~$V.$
\end{lemma}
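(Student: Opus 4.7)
The plan is to unpack the definition of $A$ given in~\eqref{absOper-AinV} and recognize it as the canonical Riesz isomorphism associated with the Hilbert structure of $V$. All assertions then follow from standard Hilbert space facts, once one has checked that the bilinear form~\eqref{scpV} is indeed a complete inner product on $V=\mathrm{D}(\clA)$.

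First, I would verify that the $V$-inner product~\eqref{scpV} can be rewritten as $(z,w)_V=\nu_2(\clA z,\clA w)_H$ for $(z,w)\in\mathrm{D}(\clA)\times\mathrm{D}(\clA)$. Expanding $(\clA z,\clA w)_H=((-\Delta+\Id)z,(-\Delta+\Id)w)_H$ and performing one integration by parts for the cross terms (which is valid under each of the three admissible boundary conditions, since $\mathrm{D}(\clA)$ encodes precisely the boundary data needed to make the boundary integrals vanish) yields exactly the expression in~\eqref{scpV}. Because $\clA\colon V\to H$ is, by the spectral construction described in the excerpt, an isometric isomorphism, it follows that $(\Bigcdot,\Bigcdot)_V$ is a genuine complete scalar product on $V$.

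Next I would construct $A$ as the Riesz map for this Hilbert structure. For fixed $z\in V$, the functional $w\mapsto(z,w)_V$ is linear and, by Cauchy--Schwarz, satisfies $|(z,w)_V|\le\norm{z}{V}\norm{w}{V}$, so it lies in $V'$; denote it by $Az$. This shows both that $A$ is well defined as a map $V\to V'$ and that $A\in\clL(V,V')$ with $\norm{Az}{V'}\le\norm{z}{V}$. Taking $w=z/\norm{z}{V}$ gives the reverse inequality, so $A$ is an isometry; in particular it is injective. Surjectivity is precisely the Riesz representation theorem: any $\ell\in V'$ is represented by a unique $z\in V$ with $(z,w)_V=\langle\ell,w\rangle_{V',V}$, meaning $Az=\ell$. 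Hence $A$ is an isometric isomorphism from $V$ onto $V'$.

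Symmetry is immediate, since $\langle Az,w\rangle_{V',V}=(z,w)_V=(w,z)_V=\langle Aw,z\rangle_{V',V}$. Finally, the bilinear form $(y,z)\mapsto\langle Ay,z\rangle_{V',V}$ coincides tautologically with $(y,z)_V$, which is a complete inner product on $V$ by the first step. There is no real obstacle: the only point requiring care is confirming that integration by parts is legitimate for each of the three boundary settings (bi-Dirichlet, bi-Neumann, periodic), so that the two expressions for $(z,w)_V$ genuinely agree on $\mathrm{D}(\clA)\times\mathrm{D}(\clA)$.
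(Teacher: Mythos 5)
Your proof is correct and follows essentially the same route as the paper: both identify $A$ as the canonical isomorphism attached to the complete inner product $(z,w)_V=\nu_2(\clA z,\clA w)_H$ on $V=\rmD(\clA)$, after checking via integration by parts that this agrees with~\eqref{scpV} under each of the three boundary conditions. The only cosmetic difference is that you obtain bijectivity from the Riesz representation theorem, whereas the paper cites the already-established fact that $\clA^2$ is an isometry from $\rmD(\clA)$ onto $\rmD(\clA^{-1})=\rmD(\clA)'$; under the $H$-pivot identification these are the same statement.
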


\begin{lemma}\label{LA:A0cdc}
The inclusion $V\subseteq H$ is continuous, dense, and compact. 
\end{lemma}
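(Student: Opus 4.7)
The plan is to verify the three claimed properties separately, all of which follow easily from structure already recorded earlier in the paper.

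For continuity, I would read off the inequality $\|z\|_H^2 \le \nu_2^{-1}\|z\|_V^2$ directly from the definition of the $V$-scalar product in~\eqref{scpV}, which explicitly contains the term $\nu_2(z,z)_H$. Hence the identity map $V \hookrightarrow H$ has operator norm at most $\nu_2^{-1/2}$.

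For density, I would use the spectral basis $(e_i)_{i\in\bbN_+}$ of eigenfunctions of $\clA$ already invoked just before the lemma. Each $e_i$ lies in $\rmD(\clA) = V$, and this basis is complete in $H$, so the finite linear combinations of $e_i$s form a dense subspace of $H$ contained in $V$.

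For compactness, the key observation is that the text preceding the lemma already asserts that $\clA^{-1}\colon H\to H$ is compact. Given a bounded sequence $(y_n)\subseteq V$, the sequence $\clA y_n$ is bounded in $H$ because $\clA\colon V\to H$ is an isometry (up to the norm on $V$ inherited via the $\clA^2$ construction, or equivalently because $\|\clA y\|_H$ controls the $W^{2,2}$-norm). Compactness of $\clA^{-1}$ then yields a subsequence of $y_n = \clA^{-1}(\clA y_n)$ that converges in~$H$, which is the desired compactness of $V\hookrightarrow H$. (Equivalently, one could cite Rellich--Kondrachov applied to $V\subseteq W^{2,2}(\Omega)$, using that $\Omega$ is a bounded Lipschitz rectangle or a flat torus.)

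There is no serious obstacle here; the only nontrivial ingredient is the compactness of $\clA^{-1}$ on $H$, and this is already stated and used in the surrounding discussion.
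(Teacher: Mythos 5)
Your proposal is correct and rests on the same underlying facts as the paper's (very terse) proof: the paper simply cites the chain of dense, compact inclusions $\rmD(\clA)\xhookrightarrow{\rm d,c}\clV\xhookrightarrow{\rm d,c}H$ together with the equivalence of the $V$- and $\rmD(\clA)$-norms, whereas you verify continuity from the explicit scalar product~\eqref{scpV}, density from the completeness of the eigenbasis $(e_i)$, and compactness from the compactness of $\clA^{-1}$ on $H$. This is just a spelled-out version of the same standard argument, and all three steps check out.
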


\begin{lemma}\label{LA:A1}
For  all~$t>0$ we have~$A_{\rm rc}(t)\in\clL(V,H)$, and there is a nonnegative constant~$C_{\rm rc}$
such that, $\norm{A_{\rm rc}}{L^\infty(\bbR_0,\clL(V,H))}\le C_{\rm rc}.$
\end{lemma}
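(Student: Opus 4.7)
The three summands of $A_{\rm rc}y = (\nu_1+2\nu_2)\Delta y - \nu_2 y + \nu_0(\nabla\widehat y,\nabla y)_{\bbR^d}$ are independent of one another, so I would estimate each separately in $H = L^2(\Omega)$ by the $V$-norm, with constants involving only $\nu_0,\nu_1,\nu_2$, the geometry of $\Omega$, and $C_{\widehat y}$ from Assumption~\ref{A:haty}.

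\textbf{Linear terms.} Since $V = \rmD(\clA)$ with $\clA = -\Delta+\Id$, the equivalence of norms
\[
\norm{y}{W^{2,2}(\Omega)}^2 \;\le\; C\bigl(\norm{\Delta y}{H}^2 + \norm{\nabla y}{H^d}^2 + \norm{y}{H}^2\bigr) \;\le\; C'\norm{y}{V}^2,
\]
valid under each of the three boundary conditions by elliptic regularity, immediately gives
$\norm{(\nu_1+2\nu_2)\Delta y-\nu_2 y}{H} \le C_1\norm{y}{V}$. These terms are time-independent, so nothing more is needed.

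\textbf{Nonlinear coupling term.} For the term $\nu_0(\nabla\widehat y,\nabla y)_{\bbR^d}$, I would use H\"older's inequality with the split $\tfrac{1}{2}=\tfrac{1}{3}+\tfrac{1}{6}$:
\[
\norm{(\nabla\widehat y(t),\nabla y)_{\bbR^d}}{L^2(\Omega)} \le \norm{\nabla\widehat y(t)}{L^3(\Omega)^d}\,\norm{\nabla y}{L^6(\Omega)^d}.
\]
The first factor is bounded by $C_{\widehat y}$ uniformly in $t \in \bbR_+$ by Assumption~\ref{A:haty}. For the second factor, since $\nabla y \in W^{1,2}(\Omega)^d$ whenever $y \in V \hookrightarrow W^{2,2}(\Omega)$, the Sobolev embedding $W^{1,2}(\Omega) \hookrightarrow L^6(\Omega)$ (which holds for all $d\in\{1,2,3\}$, being the critical case in $d=3$ and subcritical otherwise) yields
\[
\norm{\nabla y}{L^6(\Omega)^d} \le C\,\norm{\nabla y}{W^{1,2}(\Omega)^d} \le C'\,\norm{y}{V}.
\]
Combining the three estimates produces $\norm{A_{\rm rc}(t)y}{H}\le (C_1+|\nu_0|\,C_{\widehat y}\,C')\norm{y}{V}$ with a constant independent of $t\ge0$, which is precisely $C_{\rm rc}$.

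\textbf{Main obstacle.} The only nontrivial step is the nonlinear term, and the apparent difficulty is matching the integrability of $\nabla\widehat y$ to what is available for $\nabla y$ in the critical dimension $d=3$. This is exactly why Assumption~\ref{A:haty} asks for $L^3$-integrability of $\nabla\widehat y$: the pair $(L^3,L^6)$ is the borderline H\"older dual in $d=3$ and just enough to absorb the loss in Sobolev embedding. For $d\in\{1,2\}$ the same argument works with room to spare, so the assumption is dictated by the three-dimensional case. No boundary-condition-dependent argument is needed beyond the norm equivalence above, so the estimate holds uniformly for periodic, bi-Dirichlet, and bi-Neumann settings.
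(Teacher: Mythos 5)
Your proposal is correct and follows essentially the same route as the paper: a triangle-inequality split of the three summands, control of the linear terms via the equivalence of the $V$-norm with the $W^{2,2}$-norm, and the H\"older pairing $L^3\times L^6\to L^2$ combined with Assumption~\ref{A:haty} and the Sobolev embedding $W^{1,2}(\Omega)\hookrightarrow L^6(\Omega)$ for the coupling term. The only difference is expository: you spell out the role of the $d=3$ borderline, which the paper leaves implicit.
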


\begin{lemma}\label{LA:NN}
 We have~$N(t,\Bigcdot)\in\clC_{\rm b,i}(\rmD(A),H)$ 
and
there exist constants $C_\clN\ge 0$, $n\in  \bbN_+$, 
$\zeta_{1j}\ge0$, $\zeta_{2j}\ge0$,
 $\delta_{1j}\ge 0$, ~$\delta_{2j}\ge 0$, 
 with~$j\in\{1,2,\dots,n\}$, such that
 for  all~$t>0$ and
 all~$(y_1,y_2)\in \rmD(A)\times \rmD(A)$, we have
\begin{align}
&\norm{N(t,y_1)-N(t,y_2)}{H}\le C_N\textstyle\sum\limits_{j=1}^{n}
  \left( \norm{y_1}{V}^{\zeta_{1j}}\norm{y_1}{\rmD(A)}^{\zeta_{2j}}+\norm{y_2}{V}^{\zeta_{1j}}\norm{y_2}{\rmD(A)}^{\zeta_{2j}}\right)
   \norm{d}{V}^{\delta_{1j}}\norm{d}{\rmD(A)}^{\delta_{2j}},
\notag
\end{align}
with~$d\coloneqq y_1-y_2$, $\zeta_{2j}+\delta_{2j}<1$ and~$\delta_{1j}+\delta_{2j}=1$.
\end{lemma}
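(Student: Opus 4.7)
The plan is to exploit that $N$ is a time-independent quadratic form in $\nabla y$, so the estimate reduces to a purely spatial inequality via the elementary difference-of-squares identity
\[
N(y_1) - N(y_2) = \tfrac{1}{2}\nu_0\bigl(\nabla(y_1+y_2), \nabla d\bigr)_{\bbR^d}, \qquad d \coloneqq y_1 - y_2.
\]
Taking pointwise Cauchy--Schwarz in $\bbR^d$ followed by H\"older's inequality in $L^2 = L^4 \cdot L^4$, I would then reduce matters to bounding $\|\nabla v\|_{L^4(\Omega)^d}$ for $v \in V$.

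The key ingredient is the continuous Sobolev embedding $W^{1,2}(\Omega) \hookrightarrow L^4(\Omega)$, which holds in all relevant spatial dimensions $d \in \{1,2,3\}$ and for each of the three boundary/periodicity settings (bi-Dirichlet, bi-Neumann, periodic), since in every case $V$ is continuously included in $W^{2,2}(\Omega)$ and hence $\nabla v \in W^{1,2}(\Omega)^d$. This gives $\|\nabla v\|_{L^4(\Omega)^d} \le C \|v\|_V$, and the desired inequality then follows in the one-term form
\[
\|N(t,y_1) - N(t,y_2)\|_H \le C_N \bigl(\|y_1\|_V + \|y_2\|_V\bigr)\|d\|_V,
\]
that is, with $n=1$, $\zeta_{11} = \delta_{11} = 1$, and $\zeta_{21} = \delta_{21} = 0$; the required side conditions $\zeta_{21} + \delta_{21} = 0 < 1$ and $\delta_{11} + \delta_{21} = 1$ are trivially met.

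Finally, to obtain the membership $N(t,\cdot) \in \clC_{\rm b,i}(\rmD(A), H)$ I would specialize to $y_2 = 0$, noting that $N(t,0) = 0$, to deduce $\|N(t,y)\|_H \le C \|y\|_V^2 \le C \|y\|_{\rmD(A)}^2$; one may then take $\mathfrak n(r) = Cr^2 \in \clC_{0,\rm i}(\overline{\bbR_0}, \bbR)$. Continuity of $N$ as a map from $\rmD(A)$ into $H$ follows from the Lipschitz estimate above combined with the continuous inclusion $\rmD(A) \hookrightarrow V$. I do not anticipate any serious obstacle; the only point deserving mild care is confirming that the embedding $W^{1,2}(\Omega) \hookrightarrow L^4(\Omega)$ is valid in each of the three boundary settings, but this is standard on rectangular and toric domains in dimensions at most three.
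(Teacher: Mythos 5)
Your proposal is correct and follows essentially the same route as the paper: the identity $N(y_1)-N(y_2)=\tfrac12\nu_0(\nabla(y_1+y_2),\nabla d)_{\bbR^d}$ is just the combined form of the paper's bilinear splitting, and the subsequent H\"older $L^4\cdot L^4$ estimate together with $W^{1,2}(\Omega)\hookrightarrow L^4(\Omega)$ for $d\le3$, yielding the exponents $(\zeta_{11},\zeta_{21},\delta_{11},\delta_{21})=(1,0,1,0)$ with $n=1$, is exactly the paper's argument. Your explicit verification of the $\clC_{\rm b,i}(\rmD(A),H)$ membership via $y_2=0$ and $\mathfrak n(r)=Cr^2$ is a welcome addition that the paper leaves implicit.
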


Now we give the proofs of the Lemmas.

\begin{proof}[Proof of Lemma~\ref{LA:A0sp}]
From the discussion in section~\ref{sS:KS-abstract} we know that $A=\clA^2$ is an isometry from~$\rmD(\clA)=V$ onto~$\rmD(\clA^{-1})=\rmD(\clA)'$. It is also not hard to see that~$\langle Ay,z\rangle_{V',V}$ as in~\eqref{absOper-AinV} defines a scalar product in~$V$, whose associated norm is equivalent to the usual norm in~$W^{2,2}(\Omega)$.
\end{proof}

\begin{proof}[Proof of Lemma~\ref{LA:A0cdc}]
The statement follows from the inclusions~$\rmD(\clA)\xhookrightarrow{\rm d,c} \clV\xhookrightarrow{\rm d,c} H$ and from the equivalence of the norms of~$V$ and~$\rmD(\clA)$.
\end{proof}

\begin{proof}[Proof of Lemma~\ref{LA:A1}]
We observe that
\[
\norm{A_{\rm rc}(t)h}{H}\le (2\nu_2+\nu_1+\nu_0) \left(\norm{\Delta y}{H}+\norm{y}{H}+\norm{(\nabla  \widehat y,\nabla y)_{\bbR^d}}{H}\right)\le C_0\left(\norm{y}{\rmD(\clA)}+C_{\widehat y}\norm{\nabla y}{(L^6)^d}\right),
\]
with $C_{\widehat y}$ as in Assumption~\ref{A:haty}. We finish the proof by recalling that~$V\xhookrightarrow{}\rmD(\clA)$ and~$W^{1,2}\xhookrightarrow{}L^6$. The latter gives us~$\norm{\nabla y}{(L^6)^d}\le C_1\norm{y}{W^{2,2}}\le C_2\norm{y}{\rmD(\clA)}$.
\end{proof}

\begin{proof}[Proof of Lemma~\ref{LA:NN}]
With~$z\coloneqq y_1-y_2$, we write
\begin{align}
N(y_1)-N(y_2)=\tfrac12\nu_0(\nabla  y_1,\nabla z)_{\bbR^d}+\tfrac12\nu_0(\nabla z,\nabla  y_2)_{\bbR^d}\notag
\end{align}
and obtain, recalling that $1\le d\le 3$,
\begin{align}
\norm{N(y_1)-N(y_2)}{H}
&\le \tfrac12\nu_0\left(\norm{\norm{\nabla y_1}{\bbR^d}}{L^4(\Omega)}+\norm{\norm{\nabla y_2}{\bbR^d}}{L^4(\Omega)}\right)\norm{\norm{\nabla z}{\bbR^d}}{L^4(\Omega)}\notag\\
&\le C_1\left(\norm{y_1}{V}+\norm{y_2}{V}\right)\norm{z}{V}\notag
\end{align}
Thus, the Lemma follows with~$n=1$, $C_N=C_1$, and~$(\zeta_{1j},\zeta_{2j},\delta_{1j},\delta_{2j})=(1,0,1,0)$.
\end{proof}

\subsection{On the feedback triple}\label{sS:feedtriple}
Recalling~\eqref{FeedKy-N-abs}, we define the mapping
\[
F_{M}\colon \clE_{M}\to \clE_{M},\quad\mbox{with}\quad F_{M}(q)\coloneqq \nu_2\Delta^2 q+\lambda q.
 \]
 We
show that the triple~$(F_{M},\clU_{M},\clE_{M})$  
appearing in the feedback operator satisfies the assumptions
as in~\cite[Sect.~3.3]{Rod20-eect}, for suitably spaces~$\clU_{M}$ and~$\clE_{M}$.
For that, it is enough to prove the following Lemmas~\ref{LA:FFM}--\ref{LA:bddratP=suffalpha} corresponding to the assumptions on the feedback operator we find in~\cite[Assumps.~3.6--3.8, and Cor.~3.9]{Rod20-eect}.

Recall the constants in Lemma~\ref{LA:NN}, and consider the system
\begin{equation}\label{sys-q}
 \dot q=-F_{M}(q),\qquad q(0)=q_0\in\clE_M,
\end{equation}
and the eigenvalues
\begin{subequations}\label{VDAalphaM}
 \begin{align}
\overline \alpha_{M}&\coloneqq \max\{\alpha_i\mid \mbox{there is }\phi\in\clE_{M}
 \mbox{ such that } A\phi=\alpha_i\phi\},\\
 \overline\alpha_{M_{+}}&\coloneqq \min\{\alpha_i\mid \mbox{there is }\phi\in\clE_{M}^\perp
 \mbox{ such that } A\phi=\alpha_i\phi\}. 
\end{align}
\end{subequations}

With the tuple~$\tau_i\coloneqq(\zeta_{i1},\zeta_{i2},\delta_{i1},\delta_{i2})$ as in Lemma~\ref{LA:NN}, for a given map~$\tau_i\mapsto \xi(\tau_i)\in[0,+\infty]$, we denote
$\dnorm{\xi(\zeta_{1},\zeta_{2},\delta_{1},\delta_{2})}{}=\max\limits_{1\le i\le n}\xi(\zeta_{i1},\zeta_{i2},\delta_{i1},\delta_{i2})$ (cf.~Sect.~\ref{sS:notation}).

\begin{lemma}\label{LA:FFM}
 We have that~$F_{M}\in\clC_{\rm b,i}(\clE_{M},\clE_{M})$ and there are
 constants~$C_{q0}\ge0$,~$C_{q1}\ge1$,~$C_{q2}\ge0$, $C_{q3}\ge0$, $\xi\ge1$,~$\lambda>0$,
 $\beta_{1}\ge0$, 
 $\beta_{2}\ge0$, $\eta_{1}\ge0$, 
 $\eta_{2}\ge0$, and~$1<\fkr<\frac{1}{\|\zeta_2+\delta_2\|}$, all independent of~$M$, such that:
\begin{align}
&\norm{F_{M}(\widehat q)}{H}\le C_{q0}\norm{\widehat q}{\rmD(A)}^\xi,
\quad\mbox{for all}\quad \widehat q\in \clE_{M},\label{FFM-F}
\end{align}
\begin{align}
&\beta_1+\beta_2\ge1,\qquad\fkr\|\zeta_1+\delta_1+(\eta_1+\eta_2)(\zeta_2+\delta_2)\|\ge1,\label{FFM-betaeta1}\\%
&\left(\dnorm{\tfrac{\zeta_1+\delta_1}{1-\zeta_2-\delta_2}}{}-1\right)\beta_2<1-\dnorm{\tfrac{\zeta_2}{1-\delta_2}}{}
,\quad\!
\left(\dnorm{\tfrac{\zeta_1+\delta_1}{1-\zeta_2-\delta_2}}{}\!-1\!\right)\!
\dnorm{\zeta_{2}+\delta_{2}}{}\fkr\eta_2<1-\dnorm{\tfrac{\zeta_2}{1-\delta_2}}{},\label{FFM-betaeta2}
\end{align}
and  every solution~$q$ of system~\eqref{sys-q} satisfies, for all~$t\ge0$,
\begin{align}
    &\norm{q(t)}{V}\le C_{q1}\ex^{-\lambda t}\norm{q(0)}{V},\qquad\norm{q}{L^2(\bbR_{0},\rmD(A))}\le
 C_{q2}\norm{q(0)}{V}^{\eta_1}\norm{q(0)}{\rmD(A)}^{\eta_2},\label{FFM-q}\\
  &\norm{Aq-F_{M}(q)}{L^{2\fkr}(\bbR_{0},H)}\le
  C_{q3}\norm{q(0)}{V}^{\beta_1}\norm{q(0)}{\rmD(A)}^{\beta_2}.\label{FFM-A-F}
\end{align}
\end{lemma}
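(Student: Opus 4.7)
The plan is to exploit that $F_M(q) = \nu_2\Delta^2 q + \lambda q$ is diagonal in the eigenbasis $(e_j)_{j\ge 1}$ of $\clA$ that spans $\clE_M$, so every estimate reduces to a scalar inequality in the eigenvalues $\alpha_j\ge 1$ of $\clA = -\Delta + \Id$ (note that $\alpha_j\ge 1$ for bi-Dirichlet, bi-Neumann, and periodic boundary conditions). Writing $q = \sum_j q_j e_j\in\clE_M$ and using~\eqref{scpV},
\begin{equation*}
\|q\|_V^2 = \nu_2\sum_j \alpha_j^2 q_j^2,\qquad \|q\|_{\rmD(A)}^2 = \nu_2^2\sum_j \alpha_j^4 q_j^2,\qquad F_M(q) = \sum_j\bigl(\nu_2(\alpha_j-1)^2+\lambda\bigr) q_j e_j,
\end{equation*}
so in particular $F_M$ maps $\clE_M$ into itself.

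Bound~\eqref{FFM-F} with $\xi = 1$ is immediate from $(\nu_2(\alpha_j-1)^2+\lambda)^2\le C\alpha_j^4$ uniformly in $j\ge 1$. Solving $\dot q = -F_M(q)$ modewise gives $q(t) = \sum_j\ex^{-(\nu_2(\alpha_j-1)^2+\lambda)t}q_j(0)e_j$, so the first inequality in~\eqref{FFM-q} holds with $C_{q1} = 1$ because every exponent is $\ge\lambda$. For the second I would integrate termwise,
\begin{equation*}
\int_0^\infty\|q(t)\|_{\rmD(A)}^2\,\ed t = \nu_2^2\sum_j \frac{\alpha_j^4\,q_j(0)^2}{2\bigl(\nu_2(\alpha_j-1)^2+\lambda\bigr)},
\end{equation*}
and apply the elementary calculus bound
\begin{equation*}
\sup_{\alpha\ge 1}\frac{\alpha^2}{2\bigl(\nu_2(\alpha-1)^2+\lambda\bigr)} = \frac{\nu_2+\lambda}{2\nu_2\lambda}<+\infty,
\end{equation*}
attained at $\alpha = 1+\lambda/\nu_2$, to conclude $\|q\|_{L^2(\bbR_0,\rmD(A))}^2\le C\|q(0)\|_V^2$, hence $(\eta_1,\eta_2) = (1,0)$.

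For~\eqref{FFM-A-F} I would rewrite $Aq-F_M(q) = 2\nu_2\clA q - (\nu_2+\lambda)q$, estimate the $H$-norm by $C\|q\|_V$ via the equivalence $\|\clA q\|_H\sim\|q\|_V$, and integrate the exponential decay already established, obtaining $\|Aq-F_M(q)\|_{L^{2\fkr}(\bbR_0,H)}\le C_\fkr\|q(0)\|_V$ for every $\fkr>1$, so $(\beta_1,\beta_2) = (1,0)$. It remains to verify~\eqref{FFM-betaeta1}--\eqref{FFM-betaeta2} with the tuple $(\zeta_{11},\zeta_{12},\delta_{11},\delta_{12}) = (1,0,1,0)$ extracted from the proof of Lemma~\ref{LA:NN}: since $\|\zeta_2+\delta_2\| = 0$, the condition $\fkr < 1/\|\zeta_2+\delta_2\|$ is vacuous and any $\fkr\in(1,+\infty)$ is admissible; moreover $\fkr\|\zeta_1+\delta_1\| = 2\fkr\ge 1$, and both inequalities in~\eqref{FFM-betaeta2} reduce to $0 < 1$. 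All constants depend only on $\nu_2$, $\lambda$, and the lower spectral bound $\alpha_j\ge 1$, hence are independent of $M$; the only subtle point is the uniform spectral estimate above, which is precisely where the hypotheses $\nu_2 > 0$ and $\lambda > 0$ are essential.
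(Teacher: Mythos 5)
Your proof is correct and follows essentially the same route as the paper's: solve $\dot q=-F_{M}(q)$ explicitly, read off the uniform exponential decay with rate $\lambda$, bound $\norm{F_{M}(\widehat q)}{H}$ and $\norm{Aq-F_{M}(q)}{H}$ by $\rmD(A)$- and $V$-norms respectively, conclude $(\beta_1,\beta_2)=(1,0)$, and observe that the tuple $(1,0,1,0)$ from Lemma~\ref{LA:NN} makes the constraint $\fkr<1/\|\zeta_2+\delta_2\|$ vacuous and reduces \eqref{FFM-betaeta1}--\eqref{FFM-betaeta2} to $2\fkr\ge1$ and $\beta_2<1$. The one place you diverge is the second bound in \eqref{FFM-q}: your modewise integration combined with $\sup_{\alpha\ge1}\alpha^2/\bigl(2(\nu_2(\alpha-1)^2+\lambda)\bigr)=(\nu_2+\lambda)/(2\nu_2\lambda)$ gives the sharper exponents $(\eta_1,\eta_2)=(1,0)$, whereas the paper settles for $\norm{q}{L^2(\bbR_{0},\rmD(A))}\le(2\lambda)^{-1/2}\norm{q(0)}{\rmD(A)}$, i.e.\ $(\eta_1,\eta_2)=(0,1)$; both choices are admissible here precisely because $\zeta_2+\delta_2=0$ removes $\eta_1,\eta_2$ from \eqref{FFM-betaeta1}--\eqref{FFM-betaeta2}, so your extra spectral work buys nothing for this particular nonlinearity, though it would become relevant for one with $\|\zeta_2+\delta_2\|>0$.
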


\begin{lemma}\label{LA:DS}
 We have the direct sum~$H=\clU_{M}\oplus \clE_{M}^\perp$.
\end{lemma}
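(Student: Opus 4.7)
The plan is to reduce the claim to the invertibility of a finite-dimensional Gram matrix, factor this matrix into one-dimensional pieces via the tensor-product structure of the actuators and eigenfunctions, and verify invertibility of the 1D pieces case by case. The first two steps are uniform in the boundary condition; only the last depends on it.

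First I would observe that $\dim \clU_M = \dim \clE_M = M^d$ and that $H = \clE_M \oplus \clE_M^\perp$ holds as an orthogonal decomposition. Thus the asserted direct sum $H = \clU_M \oplus \clE_M^\perp$ is equivalent to $\clU_M \cap \clE_M^\perp = \{0\}$, which in turn is equivalent to nonsingularity of the Gram matrix
\[
G_M \coloneqq \bigl[(\Phi_{M,\bfj}, e_{\bfk}^M)_H\bigr]_{\bfj, \bfk \in \{1,\ldots,M\}^d}.
\]
Using the Cartesian-product structure of both $\Phi_{M,\bfj} = \indf_{\omega_{\bfj}^M}$ and $e_{\bfk}^M$, the integral factorises: $(\Phi_{M,\bfj}, e_{\bfk}^M)_H = \prod_{n=1}^d (\indf_{\omega_{n,\bfj_n}^M}, e_{n,\bfk_n})_{L^2(0,L_n)}$. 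Hence $G_M = \bigotimes_{n=1}^d G_{n,M}$ is a Kronecker product of $M \times M$ one-dimensional Gram matrices, so invertibility of $G_M$ is equivalent to invertibility of each $G_{n,M}$.

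For bi-Dirichlet and bi-Neumann boundary conditions, the $e_{n,k}$ are the standard sine and cosine eigenfunctions of the 1D Laplacian, and the required invertibility of $G_{n,M}$ is already available in~\cite{KunRod19-cocv, RodSturm20}. For periodic boundary conditions I would compute the entries explicitly; with $c_{n,j}^M = (2j-1)L_n/(2M)$ and half-width $\delta = rL_n/(2M)$, the entries reduce, after pulling out a nonzero common column factor $\tfrac{L_n}{m\pi}\sin\bigl(\tfrac{m\pi r}{M}\bigr)$ (nonzero since $r \in (0,1)$ and $1 \le m \le \lfloor M/2 \rfloor$), to $\cos(m\theta_j)$ or $\sin(m\theta_j)$ at the angles $\theta_j = (2j-1)\pi/M$, together with the constant column coming from $e_{n,1}\equiv 1$.

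The finish will be to identify this trigonometric matrix with a Vandermonde matrix at the nodes $\zeta_j = e^{i\theta_j}$, $j = 1,\ldots,M$, which are pairwise distinct because they satisfy $\zeta_j^M = -1$. The main obstacle I anticipate is the parity of $M$: when $M$ is even, the top-frequency cosine $\cos(M\theta_j/2) = \cos((2j-1)\pi/2)$ vanishes identically, which is exactly the reason $e_{n,M}$ is chosen in~\eqref{eigf} as a sine rather than a cosine; one has to check carefully that the resulting family of $M$ columns corresponds, after multiplying each row by the nonzero scalar $\zeta_j^{-\lfloor M/2 \rfloor}$, to a Vandermonde matrix at $M$ distinct nodes with $M$ consecutive integer exponents of $\zeta_j$. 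Once that bookkeeping is in place, the Vandermonde determinant is nonzero and the argument concludes.
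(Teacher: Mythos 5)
Your proposal is correct, and its skeleton coincides with the paper's: the same reduction of the direct sum to nonsingularity of the Gram matrix $[(\indf_{\omega_{\bfj}^{M}},e_{\bfk}^{M})_H]$ (the paper cites \cite[Lem.~2.7]{KunRod19-cocv} for this), the same Kronecker/tensor factorization to the case $d=1$, and the same deferral of the bi-Dirichlet and bi-Neumann cases to \cite{KunRod19-cocv,RodSturm20}. Where you genuinely diverge is the key invertibility argument for the periodic one-dimensional matrix. The paper argues by contradiction and zero counting: a vector in the left kernel makes the nodes $\widehat c_j=(2j-1)\pi/M$ zeros of a trigonometric polynomial $\varphi$; the reflection symmetry $\widehat C=2\pi-\widehat C$ of the node set is then used to decouple $\varphi$ into a pure cosine polynomial and a pure sine polynomial, each of which acquires more zeros in $(0,\pi]$ than a nonzero polynomial of its degree can have, by \cite[Sect.~5, Thm.~4]{Boyd07} and \cite[Prop.~4.1]{KunRod19-cocv}. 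Your route instead complexifies, identifies the matrix (after an invertible row mixing of the $\pm m$ frequency pairs) with a Vandermonde matrix at the nodes $\zeta_j=\ex^{\rmi\widehat c_j}$, which are distinct roots of $z^M=-1$, and concludes from the Vandermonde determinant. This is self-contained (no external zero-counting theorem, and for odd $M$ no use of the symmetry of the nodes at all), and it isolates exactly where the design choice of taking $e_{n,M}$ as a sine at the Nyquist frequency matters: for even $M$ the identity $\zeta_j^{M/2}=-\zeta_j^{-M/2}$, i.e.\ $\cos(\tfrac{M}{2}\widehat c_j)=0$, is what lets the unpaired row $\sin(\tfrac{M}{2}\widehat c_j)$ fill the single missing consecutive exponent — the parity bookkeeping you flag is real but routine, and your argument goes through. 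The paper's approach buys a statement that extends to less structured node sets satisfying only the reflection symmetry; yours buys a shorter, more elementary and more transparent proof for the actual equispaced nodes at hand.
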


\begin{lemma}\label{LA:bddratP=suffalpha}
We have that~$\sup\limits_{M\in\bbN_+}\frac{\overline\alpha_{M}}{\overline\alpha_{M_{+}}}<+\infty$ and that~
$\sup\limits_{M\in\bbN_+}\norm{P_{\clE_{M}^\perp}^{\clU_{M}}}{\clL(H)}<+\infty$.
\end{lemma}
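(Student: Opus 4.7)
The plan is to prove the two claims separately, both exploiting the Cartesian product structure of the eigenfunctions and actuators.

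\emph{Eigenvalue ratio.} The eigenvalues of $A=\nu_2\clA^2$ have the explicit form $\alpha_\bfj^A=\nu_2(1+\sum_{n=1}^d\mu_{n,\bfj_n})^2$, where $\mu_{n,i}$ is the $i$-th eigenvalue of $-\p_{x_n}^2$ on $(0,L_n)$ (or $[0,L_n)$) under the corresponding one-dimensional boundary condition. In all three cases one checks directly from the explicit formulas in~\eqref{eigf} that $c_-i^2\le\mu_{n,i}\le c_+i^2$ for $i\ge 2$. Thus $\overline{\alpha}_M$, attained at $\bfj=(M,\dots,M)$, satisfies $\overline{\alpha}_M\le C_\star M^4$, while every $\bfk\in\bbN_+^d\setminus\{1,\dots,M\}^d$ has some coordinate $\ge M+1$, so $\overline{\alpha}_{M_+}\ge c_\star M^4$. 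The ratio is then uniformly bounded.

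\emph{Reduction to one dimension.} Writing $H=\bigotimes_{n=1}^d L^2(0,L_n)$, with $\clU_M^{(n)}\coloneqq\linspan\{\indf_{\omega_{n,j}^M}\mid 1\le j\le M\}$ and $\clE_M^{(n)}\coloneqq\linspan\{e_{n,i}\mid 1\le i\le M\}$, we have $\clU_M=\bigotimes_n\clU_M^{(n)}$ and $\clE_M=\bigotimes_n\clE_M^{(n)}$. The orthogonal projection $\Pi_{\clE_M}$ of $H$ onto $\clE_M$ factorizes as $\bigotimes_n\Pi_{\clE_M^{(n)}}$, hence the Gram-type bijection $B_M\coloneqq\Pi_{\clE_M}|_{\clU_M}\colon\clU_M\to\clE_M$ is the tensor product $\bigotimes_n B_M^{(n)}$ of its one-dimensional analogues. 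Writing $h=u+v\in\clU_M\oplus\clE_M^\perp$ and applying $\Pi_{\clE_M}$ yields $u=B_M^{-1}\Pi_{\clE_M}h$, which gives $\norm{P_{\clU_M}^{\clE_M^\perp}}{\clL(H)}\le\norm{B_M^{-1}}{\clL(\clE_M,\clU_M)}$ and $\norm{P_{\clE_M^\perp}^{\clU_M}}{\clL(H)}\le 1+\norm{B_M^{-1}}{\clL(\clE_M,\clU_M)}$, together with the tensor identity $\norm{B_M^{-1}}{\clL(\clE_M,\clU_M)}=\prod_n\norm{(B_M^{(n)})^{-1}}{\clL(\clE_M^{(n)},\clU_M^{(n)})}$. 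Hence it suffices to prove $\sup_M\norm{(B_M^{(n)})^{-1}}{\clL(\clE_M^{(n)},\clU_M^{(n)})}<+\infty$ in each coordinate.

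\emph{One-dimensional estimate.} For Dirichlet and Neumann boundary conditions this bound is already established in~\cite{KunRod19-cocv,RodSturm20}. For the periodic case I compute $(\indf_{\omega_{n,j}^M},e_{n,i})_H$ explicitly using $\cos(\alpha-\beta)-\cos(\alpha+\beta)=2\sin\alpha\sin\beta$: the entries are proportional to $\sin(k\pi(2j-1)/M)$ or $\cos(k\pi(2j-1)/M)$ with $k=\lceil i/2\rceil$, weighted by $\sin(rk\pi/M)/k$. This weight lies in $[2r/M,\,r\pi/M]$ uniformly for $1\le k\le\lceil M/2\rceil$ and $r\in(0,1)$. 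After further normalizing by the $L^2$-norms $\norm{\indf_{\omega_{n,j}^M}}{H}=\sqrt{rL_n/M}$ and $\norm{e_{n,i}}{H}=\clO(\sqrt{L_n})$, the Gram matrix reduces to a discrete trigonometric transform at the equispaced nodes $\phi_j\coloneqq\pi(2j-1)/M\in(0,2\pi)$, whose rows are mutually orthogonal with norms $\sqrt{M/2}$ or $\sqrt{M}$ by the discrete Plancherel identity.

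The main obstacle is this final step: the periodic eigenbasis mixes $\sin$ and $\cos$ asymmetrically (particularly when $M$ is even, where $\cos(\tfrac{M}{2}\phi_j)=0$ so the top mode is a pure sine), so the reduction to a clean orthogonal matrix requires splitting into sine and cosine sub-blocks and applying the discrete orthogonality relations for $\{\sin(k\phi_j)\}_j$ and $\{\cos(k\phi_j)\}_j$ separately, to guarantee that the smallest singular value of $B_M^{(n)}$ stays bounded below uniformly in $M$.
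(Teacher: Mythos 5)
Your proposal follows essentially the same route as the paper's proof: reduce to $d=1$ via the tensor-product structure of the actuators and eigenfunctions, invoke the known Dirichlet/Neumann bounds from~\cite{KunRod19-cocv,RodSturm20}, and for the periodic case combine the weight estimate $\sin(rk\pi/M)/k\in[2r/M,\,r\pi/M]$ with the discrete orthogonality of $\{\cos(k\phi_j)\}_j$ and $\{\sin(k\phi_j)\}_j$ at the equispaced centers $\phi_j=\pi(2j-1)/M$ --- which is precisely the paper's statement that $\Theta=[(\overline E_M,\overline U_M)_H][(\overline E_M,\overline U_M)_H]^\top$ is diagonal with diagonal entries bounded below by $r(\tfrac{2}{r\pi})^2\sin^2(\tfrac{r\pi}{2})>0$ uniformly in $M$. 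The step you flag as the ``main obstacle'' (the even/odd-$M$ split and the special pure-sine top mode with $\sum_k\sin^2(\tfrac{M}{2}\widehat c_k)=M$) is exactly what the paper carries out by explicit computation and causes no difficulty, so your argument is correct, merely left schematic at that final assembly.
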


Now we give the proofs of the Lemmas.

\begin{proof}[Proof of Lemma~\ref{LA:FFM}]
 We have that~$F_M\in\clL(\clE_M)\subset\clC_{\rm b,i}(\clE_M,\clE_M)$.
Next, for~$\widehat q\in\clE_M$ we find that~$\norm{F_M\widehat q}{H}\le
\norm{\nu_2\Delta^2\widehat q}{H}+\lambda\norm{\widehat q}{H}\le(C_1+\lambda\norm{\Id}{\clL(\rmD(A),H)})\norm{\widehat q}{\rmD(A)}$, which gives us~\eqref{FFM-F}.
From its proof, we can see that Lemma~\ref{LA:NN} holds true with~$n=1$, $C_N=C_1$, and~$(\zeta_{1j},\zeta_{2j},\delta_{1j},\delta_{2j})=(1,0,1,0)$. Hence,  from~\eqref{FFM-betaeta1}--\eqref{FFM-betaeta2}, we are looking for constants satisfying
\begin{align}
&\beta_1+\beta_2\ge1,\qquad2\fkr\ge1,\qquad\beta_2<1,\label{betaKS}
\end{align}
so that the solution~$q$ of~\eqref{sys-q} satisfies~\eqref{FFM-q}--\eqref{FFM-A-F}.
Such solution, given by~$q(t)=\rme^{-(\nu_2\Delta^2+\lambda\Id) t}q_0$, satisfies~$\norm{q(t)}{V}\le\norm{\rme^{-\lambda t}q_0}{V}$ and~$\norm{q}{L^2(\bbR_{0},\rmD(A))}\le\norm{\rme^{-\lambda t}q_0}{L^2(\bbR_{0},\rmD(A))}\le
 (2\lambda)^{-\frac12}\norm{q(0)}{\rmD(A)}$. Therefore, \eqref{FFM-q} holds with
$(C_{q1},C_{q2},\eta_1,\eta_2)=(1,(2\lambda)^{-\frac12},0,1)$.
Furthermore,  we find
\begin{align}
\norm{Aq-F_{M}(q)}{H}&=\norm{-2\nu_2\Delta q+(\nu_2+\lambda)q}{H}\le2\nu_2\norm{\Delta q}{H}+(\nu_2+\lambda)\norm{q}{H}\notag\\
&\le(3\nu_2+\lambda)2^\frac12(\norm{\Delta q}{H}^2+\norm{q}{H}^2)^\frac12\le C_2\norm{q}{V}\notag
\end{align}
with~$C_2\coloneqq(3\nu_2+\lambda)2^\frac12\nu_2^{-\frac12}$, where we have used~\eqref{scpV} in the last inequality. 
Therefore, for an arbitrary~$\fkr>1$, 
\[
\norm{Aq-F_{M}(q)}{L^{2\fkr}(\bbR_{0},H)}\le C_2\norm{q}{L^{2\fkr}(\bbR_{0},V)}\le
C_2\norm{\rme^{-\lambda t}q_0}{L^{2\fkr}(\bbR_{0},V)}
\le
C_2(2\fkr\lambda)^{-\frac1{2\fkr}}\norm{q(0)}{V},
\] hence~\eqref{FFM-A-F} holds with~$(C_{q3},\beta_1,\beta_2)=(C_2(2\fkr\lambda)^{-\frac1{2\fkr}},1,0)$. In particular, $(\fkr,\beta_1,\beta_2)=(\fkr,1,0)$ with~$\fkr>1$ satisfies~\eqref{betaKS}. This ends the proof. Note that though~$\fkr\ge\frac12$ would be enough in~\eqref{betaKS}, in the statement of the lemma  it is required that~$1<\fkr<+\infty$ (cf.~\cite[Assump.~3.6]{Rod20-eect}).
\end{proof}

\begin{proof}[Proof of Lemma~\ref{LA:DS}]
It is sufficient to consider the case~$d=1$, because for higher dimensions the result follows by the arguments in~\cite[Sect.~4.8.1, Lem.~4.3]{KunRod19-cocv}.
The result, for $d=1$ and for Dirichlet and Neumann boundary conditions, can be found in~\cite[Lems.~4.3 and~5.1]{RodSturm20}. Hence, it remains to give the proof for the case of periodic boundary conditions with $d=1$, $\Omega\sim[0,L_1)$. To this purpose we use the result in~\cite[Lem.~2.7]{KunRod19-cocv} stating that the direct sum~$H=\clU_{M}\oplus \clE_{M}^\perp$ holds if, and only if, the matrix
$[(E_M,U_M)_H]\in\bbR^{M\times M}$ is invertible, where $[(E_M,U_M)_H]=[(e_i,\indf_{\omega_j^M})_H]$ stands for the matrix with entry~$(e_i,\indf_{\omega_j^M})_H$ the $i$th-row and~$j$th column. Recalling the periodic eigenfunctions in~\eqref{eigf}, by direct computations we find that
\begin{subequations}\label{scpEU}
\begin{align}
    (e_i,\indf_{\omega_j^M})_H &= \tfrac{rL_1}{M};&&\hspace{-6em}\mbox{ for }i=1;
\\
 (e_i,\indf_{\omega_j^M})_H &=\tfrac{L_1}{2\pi (i-1)}
\left(\sin\left(\tfrac{2\pi}{L_1}(i-1)(c_j+\tfrac{rL_1}{2M})\right)-\sin\left(\tfrac{2\pi}{L_1}(i-1)(c_j-\tfrac{rL_1}{2M})\right)\right)
\notag\\
    &=\tfrac{L_1}{\pi (i-1)}
\sin\left(\tfrac{(i-1) r\pi}{M}\right)\cos\left(\tfrac{2\pi}{L_1}(i-1)c_j\right),
&&\hspace{-6em}\mbox{ for odd }i\ge3;
\\
    (e_i,\indf_{\omega_j^M})_H &=\tfrac{L_1}{2\pi i}
\left(\cos\left(\tfrac{2\pi}{L_1}i(c_j-\tfrac{rL_1}{2M})\right)-\cos\left(\tfrac{2\pi}{L_1}i(c_j+\tfrac{rL_1}{2M})\right)\right)\notag\\
    &=\tfrac{L_1}{\pi i}
\sin\left(\tfrac{ir\pi}{M}\right)\sin\left(\tfrac{2\pi}{L_1}ic_j\right)
&&\hspace{-6em}\mbox{ for even }i.
\end{align}
\end{subequations}

Now let us consider the cases~$M$ odd and~$M$ even separately.

\medskip
\noindent
$\bullet$ \emph{The case~$M=1$.} In this case
\[
[(E_M,U_M)_H]=[rL_1]\in\bbR^{1\times1}
\]
which is nonsingular, because $rL_1\ne0$.

\medskip
\noindent
$\bullet$ \emph{The case of odd~$M\ge3$.} In this case, denoting for simplicity
\[
\delta_M\coloneqq \tfrac{r\pi}{M}\quad\mbox{and}\quad \widehat c_j\coloneqq\tfrac{2\pi}{L_1}c_j,
\]
we arrive at
\begin{align}\label{matrixEU}
&[(E_M,U_M)_H]\\
&= \frac{L_1}{\pi}\begin{bmatrix}
 \delta_M& \delta_M & \ldots & \delta_M
\\
 \frac{\sin(1\delta_{M})\sin(1\widehat c_1)}{1}&
 \frac{\sin(1\delta_{M})\sin(1\widehat c_2)}{1} & \ldots &
 \frac{\sin(1\delta_{M})\sin(1\widehat c_{M})}{1}
\\
 \frac{\sin(1\delta_{M})\cos(1\widehat c_1)}{1} &
 \frac{\sin(1\delta_{M})\cos(1\widehat c_2)}{1} & \ldots &
 \frac{\sin(1\delta_{M})\cos(1\widehat c_{M})}{1}
\\
 \frac{\sin(2\delta_{M})\sin(2\widehat c_1)}{2} &
 \frac{\sin(2\delta_{M})\sin(2\widehat c_2)}{2} & \ldots &
 \frac{\sin(2\delta_{M})\sin(2\widehat c_{M})}{2}
\\
 \frac{\sin(2\delta_{M})\cos(2\widehat c_1)}{2} & 
 \frac{\sin(2\delta_{M})\cos(2\widehat c_2)}{2} & \ldots &
 \frac{\sin(2\delta_{M})\cos(2\widehat c_{M})}{2}
\\
 \vdots & \vdots & \ddots & \vdots
\\
 \frac{\sin(\frac{M-1}2\delta_{M})\sin(\frac{M-1}2\widehat c_1)}{\frac{M-1}2} &
 \frac{\sin(\frac{M-1}2\delta_{M})\sin(\frac{M-1}2\widehat c_2)}{\frac{M-1}2} & \ldots & 
 \frac{\sin(\frac{M-1}2\delta_{M})\sin(\frac{M-1}2\widehat c_{M})}{\frac{M-1}2}
\\
 \frac{\sin(\frac{M-1}2\delta_{M})\cos(\frac{M-1}2\widehat c_1)}{\frac{M-1}2} &
 \frac{\sin(\frac{M-1}2\delta_{M})\cos(\frac{M-1}2\widehat c_2)}{\frac{M-1}2} & \ldots &
 \frac{\sin(\frac{M-1}2\delta_{M})\sin(\frac{M-1}2\widehat c_{M})}{\frac{M-1}2}
 \end{bmatrix}\notag
\end{align}

Dividing each row by appropriate nonzero constants, we can see that~$[(E_M,U_M)_H]$ is singular if, and only if, the matrix~$\clM$ as follows is singular,
\begin{equation}\label{matrix-clM}
\clM\coloneqq \begin{bmatrix}
 1&1 & \ldots & 1
\\
 \sin(1\widehat c_1)&
\sin(1\widehat c_2) & \ldots &
\sin(1\widehat c_{M})
\\
\cos(1\widehat c_1) &
 \cos(1\widehat c_2) & \ldots &
 \cos(1\widehat c_{M})
\\
\sin(2\widehat c_1) &
\sin(2\widehat c_2) & \ldots &
\sin(2\widehat c_{M})
\\
\cos(2\widehat c_1) & 
\cos(2\widehat c_2) & \ldots &
\cos(2\widehat c_{M})
\\
 \vdots & \vdots & \ddots & \vdots
\\
\sin(\frac{M-1}2\widehat c_1) &
\sin(\frac{M-1}2\widehat c_2) & \ldots & 
\sin(\frac{M-1}2\widehat c_{M})
\\
\cos(\frac{M-1}2\widehat c_1) &
\cos(\frac{M-1}2\widehat c_2) & \ldots &
\sin(\frac{M-1}2\widehat c_{M})
 \end{bmatrix}
\end{equation}
Note that~$\delta_M\ne0$ and that~$\sin(i\delta_{M})\ne0$ for all~$1\le i\le \frac{M-1}2$, because~$0<i\delta_{M}\le r\pi<\pi$.

Let us be given~$v\in\bbR^{1\times M}$ such that~$v\clM =0\in\bbR^{1\times M}$. This means that the~$\widehat c_j$s are distinct zeros in the open interval~$(0,2\pi)$ of the function
\[
\varphi(x)\coloneqq v_{1,1}+{\textstyle\sum\limits_{i=1}^{\frac{M-1}2}}(v_{1,2i+1}\cos(ix)+v_{1,2i}\sin(ix)).
\]
Now note that the set~$\widehat C\coloneqq\{\widehat c_j\mid 1\le j\le M\}$ satisfies~$\widehat C=2\pi-\widehat C$, because recalling~\eqref{cM},
\begin{align}\notag
c_{M+1-j}&=\tfrac{(2(M+1-j)-1)L_1}{2M}=L_1+\tfrac{(2(1-j)-1)L_1}{2M}=L_1-c_j.
\end{align}
Thus~$\widehat c_{M+1-j}=2\pi-\widehat c_j$.
Now,  since both~$\widehat c_{M+1-j}$ and~$\widehat c_j$ are zeros of~$\varphi$, we find
\begin{align}
0&=v_{1,1}+{\textstyle\sum\limits_{i=1}^{\frac{M-1}2}}(v_{1,2i+1}\cos(i\widehat c_j)+v_{1,2i}\sin(i\widehat c_j)),\notag\\
0&=v_{1,1}+{\textstyle\sum\limits_{i=1}^{\frac{M-1}2}}(v_{1,2i+1}\cos(i(2\pi-\widehat c_j))+v_{1,2i}\sin(i(2\pi-\widehat c_j)))\notag\\
&=v_{1,1}+{\textstyle\sum\limits_{i=1}^{\frac{M-1}2}}(v_{1,2i+1}\cos(i\widehat c_j)-v_{1,2i}\sin(\widehat c_j)),\notag
\end{align}
from which, after summing up, we obtain
\begin{align}\notag
0&=v_{1,1}+{\textstyle\sum\limits_{i=1}^{\frac{M-1}2}}v_{1,2i+1}\cos(i\widehat c_j),
\end{align}
which implies that the function
$
\psi(x)\coloneqq v_{1,1}+{\textstyle\sum\limits_{i=1}^{\frac{M-1}2}}v_{1,2i+1}\cos(ix)
$
has~$\tfrac{M-1}{2}+1$ distinct zeros in~$(0,\pi]$, namely the zeros~$\widehat c_j$ with
$1\le j\le \tfrac{M-1}{2}+1$; note that~$\widehat c_{\tfrac{M-1}{2}+1}=\pi$. Therefore, we can conclude that 
\begin{equation}\label{v-cos0}
v_{1,1}=v_{1,2i}=0,\qquad 1\le i\le \tfrac{M-1}2,
\end{equation}
because from~\cite[Sect.~5, Thm.~4]{Boyd07} , if~$\psi$ is not identically zero then it has at most ~$\tfrac{M-1}{2}$ zeros in~$[0,\pi]$.
Consequently,~$\varphi=\psi+\xi=\xi$ with
$
\xi(x)\coloneqq{\textstyle\sum\limits_{i=1}^{\frac{M-1}2}}v_{1,2i+1}\sin(ix)
$
 having $\tfrac{M-1}{2}+1$ zeros in~$(0,\pi]$ which implies that
\begin{equation}\label{v-sin0}
v_{1,2i+1}=0,\qquad 1\le i\le \tfrac{M-1}2,
\end{equation}
because if~$\xi(x)$
 is not identically zero then it has at most ~$\tfrac{M-1}{2}-1$ zeros in~$(0,\pi)$, due to
~\cite[Prop.~4.1]{KunRod19-cocv} \cite[Sect.~5, Thm.~4]{Boyd07} .
From~\eqref{v-cos0} and~\eqref{v-sin0} it follows that~$v=0$. Hence, $\clM$ is invertible in the case of odd~$M$..

\medskip
\noindent
$\bullet$ \emph{The case of even~$M$.} In this case we have
\begin{align}\label{matrixEU-e}
&[(E_M,U_M)_H]\\
&= \frac{L_1}{\pi}\begin{bmatrix}
 \delta_M& \delta_M & \ldots & \delta_M
\\
 \frac{\sin(1\delta_{M})\sin(1\widehat c_1)}{1}&
 \frac{\sin(1\delta_{M})\sin(1\widehat c_2)}{1} & \ldots &
 \frac{\sin(1\delta_{M})\sin(1\widehat c_{M})}{1}
\\
 \frac{\sin(1\delta_{M})\cos(1\widehat c_1)}{1} &
 \frac{\sin(1\delta_{M})\cos(1\widehat c_2)}{1} & \ldots &
 \frac{\sin(1\delta_{M})\cos(1\widehat c_{M})}{1}
\\
 \frac{\sin(2\delta_{M})\sin(2\widehat c_1)}{2} &
 \frac{\sin(2\delta_{M})\sin(2\widehat c_2)}{2} & \ldots &
 \frac{\sin(2\delta_{M})\sin(2\widehat c_{M})}{2}
\\
 \frac{\sin(2\delta_{M})\cos(2\widehat c_1)}{2} & 
 \frac{\sin(2\delta_{M})\cos(2\widehat c_2)}{2} & \ldots &
 \frac{\sin(2\delta_{M})\cos(2\widehat c_{M})}{2}
\\
 \vdots & \vdots & \ddots & \vdots
\\
 \frac{\sin(\frac{M-2}2\delta_{M})\sin(\frac{M-2}2\widehat c_1)}{\frac{M-2}2} &
 \frac{\sin(\frac{M-2}2\delta_{M})\sin(\frac{M-2}2\widehat c_2)}{\frac{M-2}2} & \ldots & 
 \frac{\sin(\frac{M-2}2\delta_{M})\sin(\frac{M-2}2\widehat c_{M})}{\frac{M-2}2}
\\
 \frac{\sin(\frac{M-2}2\delta_{M})\cos(\frac{M-2}2\widehat c_1)}{\frac{M-2}2} &
 \frac{\sin(\frac{M-2}2\delta_{M})\cos(\frac{M-2}2\widehat c_2)}{\frac{M-2}2} & \ldots &
 \frac{\sin(\frac{M-2}2\delta_{M})\sin(\frac{M-2}2\widehat c_{M})}{\frac{M-2}2}
\\
 \frac{\sin(\frac{M}2\delta_{M})\sin(\frac{M}2\widehat c_1)}{\frac{M}2} &
 \frac{\sin(\frac{M}2\delta_{M})\sin(\frac{M}2\widehat c_2)}{\frac{M}2} & \ldots & 
 \frac{\sin(\frac{M}2\delta_{M})\sin(\frac{M}2\widehat c_{M})}{\frac{M}2}
 \end{bmatrix}\notag
\end{align}

The matrix~$[(E_M,U_M)_H]$ is singular if, and only if, the matrix
\begin{equation}\label{matrix-clM}
\clM\coloneqq \begin{bmatrix}
 1&1 & \ldots & 1
\\
 \sin(1\widehat c_1)&
\sin(1\widehat c_2) & \ldots &
\sin(1\widehat c_{M})
\\
\cos(1\widehat c_1) &
 \cos(1\widehat c_2) & \ldots &
 \cos(1\widehat c_{M})
\\
\sin(2\widehat c_1) &
\sin(2\widehat c_2) & \ldots &
\sin(2\widehat c_{M})
\\
\cos(2\widehat c_1) & 
\cos(2\widehat c_2) & \ldots &
\cos(2\widehat c_{M})
\\
 \vdots & \vdots & \ddots & \vdots
\\
\sin(\frac{M-2}2\widehat c_1) &
\sin(\frac{M-2}2\widehat c_2) & \ldots & 
\sin(\frac{M-2}2\widehat c_{M})
\\
\cos(\frac{M-2}2\widehat c_1) &
\cos(\frac{M-2}2\widehat c_2) & \ldots &
\sin(\frac{M-2}2\widehat c_{M})
\\
\sin(\frac{M}2\widehat c_1) &
\sin(\frac{M}2\widehat c_2) & \ldots & 
\sin(\frac{M}2\widehat c_{M})
 \end{bmatrix}
\end{equation}
is singular. 
Note that~$\sin(i\delta_{M})\ne0$ for all~$1\le i\le \frac{M}2$, because~$0<i\delta_{M}\le r\pi<\pi$.

If~$v\in\bbR^{1\times M}$ satisfies~$v\clM =0\in\bbR^{1\times M}$, then the~$\widehat c_j$s are distinct zeros in the open interval~$(0,2\pi)$ of the function
\[
\varphi(x)\coloneqq v_{1,1}+{\textstyle\sum\limits_{i=1}^{\frac{M-2}2}}v_{1,2i+1}\cos(ix)+{\textstyle\sum\limits_{i=1}^{\frac{M}2}}v_{1,2i}\sin(ix).
\]
The set~$\widehat C\coloneqq\{\widehat c_j\mid 1\le j\le M\}$ satisfies again~$\widehat C=2\pi-\widehat C$, and since both~$\widehat c_{M+1-j}$ and~$\widehat c_j$ are zeros of~$\varphi$, we can obtain
\begin{align}\notag
0&=v_{1,1}+{\textstyle\sum\limits_{i=1}^{\frac{M-2}2}}v_{1,2i+1}\cos(i\widehat c_j),
\end{align}
which implies that the function
$
\psi(x)\coloneqq v_{1,1}+{\textstyle\sum\limits_{i=1}^{\frac{M-2}2}}v_{1,2i+1}\cos(ix)
$
has~$\tfrac{M}{2}$ distinct zeros in~$(0,\pi)$, namely the zeros~$\widehat c_j$ with
$1\le j\le \tfrac{M}{2}$; note that~$\widehat c_{\tfrac{M}{2}}<\pi$. Therefore,  due to
~\cite[Sect.~5, Thm.~4]{Boyd07}, we can conclude that 
\begin{equation}\label{v-cos0-e}
v_{1,1}=v_{1,2i+1}=0,\qquad 1\le i\le \tfrac{M-2}2.
\end{equation}
Consequently, ~$\varphi=\psi+\xi=\xi$ with
$
\xi(x)\coloneqq{\textstyle\sum\limits_{i=1}^{\frac{M}2}}v_{1,2i}\sin(ix)
$
 having $\tfrac{M}{2}$ zeros in~$(0,\pi)$ which implies that
\begin{equation}\label{v-sin0-e}
v_{1,2i}=0,\qquad 1\le i\le \tfrac{M}2,
\end{equation}
 due to
~\cite[Prop.~4.1]{KunRod19-cocv} \cite[Sect.~5, Thm.~4]{Boyd07}.
From~\eqref{v-cos0-e} and~\eqref{v-sin0-e} it follows that~$v=0$. Hence, $\clM$ is invertible also in the case of even~$M$. This finishes the proof.
\end{proof}

\begin{proof}[Proof of Lemma~\ref{LA:bddratP=suffalpha}]
An upper bound for the quotient~$\frac{\overline\alpha_{M}}{\overline\alpha_{M_{+}}}$ can be found in~\cite[Sect.~2.2]{Rod20-eect} for the cases of Dirichlet and Neumann boundary conditions.
In the case of periodic boundary conditions, recalling the eigenvalues~$\overline\alpha_{M}$ and~$\overline\alpha_{M_+}$ defined in~\eqref{VDAalphaM} we obtain
\[
\overline\alpha_{M}\le4\pi^2{\textstyle\sum\limits_{n=1}^{d}}(\tfrac{M}{2})^2(\tfrac{1}{L_n})^2\le d M^2\pi^2\underline L^{-2}\quad\mbox{and}\quad\overline\alpha_{M_+}\ge M^2\pi^2\overline L^{-2}
\]
with~$\underline L\coloneqq\min\{L_n\mid 1\le n\le d\}$ and~$\overline L\coloneqq\max\{L_n\mid 1\le n\le d\}$. Hence~$\frac{\overline\alpha_{M}}{\overline\alpha_{M_{+}}}\le
d \underline L^{-2}\overline L^{2}$.

 Concerning an upper bound for~$\norm{P_{\clE_{M}^\perp}^{\clU_{M}}}{\clL(H)}$, due to the arguments in~\cite[Lems.~4.3 and~4.4]{KunRod19-cocv} we can restrict ourselves to the case~$d=1$. In this case, an upper bound  can be derived from~\cite[Thms.~4.4 and~5.2, and~(3.5) in Cor.~3.5]{RodSturm20}, for Dirichlet and Neumann boundary conditions.
Then, it remains to consider the case~$d=1$ under periodic boundary conditions.

We compute~$\norm{P_{\clE_{M}^\perp}^{\clU_{M}}}{\clL(H)}$ by using~\cite[Cor.~2.9]{KunRod19-cocv} and following arguments as in~\cite[Sect.~4.2]{RodSturm20}.
In order to apply~\cite[Cor.~2.9]{KunRod19-cocv} we orthonormalize the family of actuators and (auxiliary) eigenfunctions, since it is clear that such families are orthogonal, we just need to normalize them. Doing so we obtain
 \begin{align}
& \overline e_i(x_1)\coloneqq \begin{cases}
(\frac1{L_1})^{\frac12},&\mbox{ if $i=1$},\\
(\frac2{L_1})^{\frac12}\cos(\tfrac{2(i-1)\pi x_1}{L_1}),&\mbox{ if $i\ge3$ is odd},\\
(\frac2{L_1})^{\frac12}\sin(\tfrac{2i\pi  x_1}{L_1}),&\mbox{ if $i$ is even},
\end{cases}
&&\mbox{ for }\clG=\clG_{\rm per},
\\
& \overline \indf_{\omega_j^M}\coloneqq (\tfrac{M}{rL_1})^{\frac12}\indf_{\omega_j^M},
 \end{align}
and, for $(\overline e_i,\overline\indf_{\omega_j^M})_H$, recalling~\eqref{scpEU},
\begin{align}
    (\overline e_i,\overline\indf_{\omega_j^M})_H &
= (\tfrac{r}{M})^\frac12;&&\hspace{0em}\mbox{ for }i=1;\notag\\
 (\overline e_i,\overline\indf_{\omega_j^M})_H &=(\tfrac{2M}{r})^{\frac12}\tfrac{1}{\pi (i-1)}
\sin\left(\tfrac{(i-1) r\pi}{M}\right)\cos\left(\tfrac{2\pi}{L_1}(i-1)c_j\right),
&&\hspace{0em}\mbox{ for odd }i\ge3;
\notag\\
   (\overline e_i,\overline\indf_{\omega_j^M})_H &=(\tfrac{2M}{r})^{\frac12}\tfrac{1}{\pi i}
\sin\left(\tfrac{ir\pi}{M}\right)\sin\left(\tfrac{2\pi}{L_1}ic_j\right)
&&\hspace{0em}\mbox{ for even }i.\notag
\end{align}

We construct the analogs of~\eqref{matrixEU} and~\eqref{matrixEU-e},
\begin{align}\label{matrixEUo}
&[(\overline E_M,\overline U_M)_H]\coloneqq[(\overline e_i, \overline\indf_{\omega_j^M})_H]\\
&= \left(\frac{2M}{r\pi^2}\right)^\frac12 \begin{bmatrix}
2^{-\frac12}\delta_M& 2^{-\frac12}\delta_M & \ldots & 2^{-\frac12}\delta_M
\\
 \frac{\sin(1\delta_{M})\sin(1\widehat c_1)}{1}&
 \frac{\sin(1\delta_{M})\sin(1\widehat c_2)}{1} & \ldots &
 \frac{\sin(1\delta_{M})\sin(1\widehat c_{M})}{1}
\\
 \frac{\sin(1\delta_{M})\cos(1\widehat c_1)}{1} &
 \frac{\sin(1\delta_{M})\cos(1\widehat c_2)}{1} & \ldots &
 \frac{\sin(1\delta_{M})\cos(1\widehat c_{M})}{1}
\\
 \frac{\sin(2\delta_{M})\sin(2\widehat c_1)}{2} &
 \frac{\sin(2\delta_{M})\sin(2\widehat c_2)}{2} & \ldots &
 \frac{\sin(2\delta_{M})\sin(2\widehat c_{M})}{2}
\\
 \frac{\sin(2\delta_{M})\cos(2\widehat c_1)}{2} & 
 \frac{\sin(2\delta_{M})\cos(2\widehat c_2)}{2} & \ldots &
 \frac{\sin(2\delta_{M})\cos(2\widehat c_{M})}{2}
\\
 \vdots & \vdots & \ddots & \vdots
\\
 \frac{\sin(\frac{M-1}2\delta_{M})\sin(\frac{M-1}2\widehat c_1)}{\frac{M-1}2} &
 \frac{\sin(\frac{M-1}2\delta_{M})\sin(\frac{M-1}2\widehat c_2)}{\frac{M-1}2} & \ldots & 
 \frac{\sin(\frac{M-1}2\delta_{M})\sin(\frac{M-1}2\widehat c_{M})}{\frac{M-1}2}
\\
 \frac{\sin(\frac{M-1}2\delta_{M})\cos(\frac{M-1}2\widehat c_1)}{\frac{M-1}2} &
 \frac{\sin(\frac{M-1}2\delta_{M})\cos(\frac{M-1}2\widehat c_2)}{\frac{M-1}2} & \ldots &
 \frac{\sin(\frac{M-1}2\delta_{M})\sin(\frac{M-1}2\widehat c_{M})}{\frac{M-1}2}
 \end{bmatrix}\notag
\end{align}
for odd~$M$, and
\begin{align}\label{matrixEU-eo}
&[(\overline E_M,\overline U_M)_H]\coloneqq[(\overline e_i, \overline\indf_{\omega_j^M})_H]\\
&= \left(\frac{2M}{r\pi^2}\right)^\frac12\begin{bmatrix}
2^{-\frac12} \delta_M& 2^{-\frac12}\delta_M & \ldots & 2^{-\frac12}\delta_M
\\
 \frac{\sin(1\delta_{M})\sin(1\widehat c_1)}{1}&
 \frac{\sin(1\delta_{M})\sin(1\widehat c_2)}{1} & \ldots &
 \frac{\sin(1\delta_{M})\sin(1\widehat c_{M})}{1}
\\
 \frac{\sin(1\delta_{M})\cos(1\widehat c_1)}{1} &
 \frac{\sin(1\delta_{M})\cos(1\widehat c_2)}{1} & \ldots &
 \frac{\sin(1\delta_{M})\cos(1\widehat c_{M})}{1}
\\
 \frac{\sin(2\delta_{M})\sin(2\widehat c_1)}{2} &
 \frac{\sin(2\delta_{M})\sin(2\widehat c_2)}{2} & \ldots &
 \frac{\sin(2\delta_{M})\sin(2\widehat c_{M})}{2}
\\
 \frac{\sin(2\delta_{M})\cos(2\widehat c_1)}{2} & 
 \frac{\sin(2\delta_{M})\cos(2\widehat c_2)}{2} & \ldots &
 \frac{\sin(2\delta_{M})\cos(2\widehat c_{M})}{2}
\\
 \vdots & \vdots & \ddots & \vdots
\\
 \frac{\sin(\frac{M-2}2\delta_{M})\sin(\frac{M-2}2\widehat c_1)}{\frac{M-2}2} &
 \frac{\sin(\frac{M-2}2\delta_{M})\sin(\frac{M-2}2\widehat c_2)}{\frac{M-2}2} & \ldots & 
 \frac{\sin(\frac{M-2}2\delta_{M})\sin(\frac{M-2}2\widehat c_{M})}{\frac{M-2}2}
\\
 \frac{\sin(\frac{M-2}2\delta_{M})\cos(\frac{M-2}2\widehat c_1)}{\frac{M-2}2} &
 \frac{\sin(\frac{M-2}2\delta_{M})\cos(\frac{M-2}2\widehat c_2)}{\frac{M-2}2} & \ldots &
 \frac{\sin(\frac{M-2}2\delta_{M})\sin(\frac{M-2}2\widehat c_{M})}{\frac{M-2}2}
\\
 \frac{\sin(\frac{M}2\delta_{M})\sin(\frac{M}2\widehat c_1)}{\frac{M}2} &
 \frac{\sin(\frac{M}2\delta_{M})\sin(\frac{M}2\widehat c_2)}{\frac{M}2} & \ldots & 
 \frac{\sin(\frac{M}2\delta_{M})\sin(\frac{M}2\widehat c_{M})}{\frac{M}2}
 \end{bmatrix}\notag
\end{align}
for even~$M$.
Then, for either case we construct the symmetric matrix
\[
\Theta\coloneqq [(\overline E_M,\overline U_M)_H][(\overline E_M,\overline U_M)_H]^\top
\]
where~$\Xi^\top$ stands for the transpose of the matrix~$\Xi$. By~\cite[Cor.~2.9]{KunRod19-cocv} we have that
\begin{equation}\label{normOP}
\norm{P_{\clU_M}^{\clE_M^\perp}}{\clL(H)}=\left(\min_\theta\{\theta\mbox{ is an eigenvalue of }\Theta\}\right)^{-\frac12}.
\end{equation}

We show next that~$\Theta$ is a diagonal matrix. The entry~$\Theta_{1j}$ in the $1$st row and~$j$th column, with~$1<j\le M$ is given by
\begin{align}
\Theta_{1j}&=\tfrac{2^{\frac12}M}{r\pi^2} \delta_M \tfrac{\sin\left(\tfrac{(j-1)}{2}\delta_{M}\right)}{\tfrac{(j-1)}{2}}{\textstyle\sum\limits_{k=1}^{M}}\cos\left(\tfrac{(j-1)}{2}\widehat c_k\right)\quad&&\mbox{if $3\le j\le M$ is odd};\notag\\
\Theta_{1j}&=\tfrac{2^{\frac12}M}{r\pi^2} \delta_M \tfrac{\sin\left(\tfrac{j}{2}\delta_{M}\right)}{\tfrac{j}{2}}{\textstyle\sum\limits_{k=1}^{M}}\sin\left(\tfrac{j}{2}\widehat c_k\right)\quad&&\mbox{if $2\le j\le M$ is even}.\notag
\end{align}
Writing for a given~$ m\in\bbN_+$,
\begin{align}\label{mck}
m\widehat c_k=m\tfrac{2\pi(2k-1)}{2M}=m\tfrac{\pi (2(k-1)+1)}{M}=\tfrac{m \pi}{M}
+(k-1)\tfrac{2m\pi }{M}
\end{align}
we find
\begin{align}
&{\textstyle\sum\limits_{k=1}^{M}}\cos\left(m\widehat c_k\right)={\textstyle\sum\limits_{n=0}^{M-1}}\cos\left(a
+nb\right)
\quad\mbox{and}\quad
{\textstyle\sum\limits_{k=1}^{M}}\sin\left(m\widehat c_k\right)={\textstyle\sum\limits_{n=0}^{M-1}}\sin\left(a
+nb\right),\notag\\
\intertext{with}
&a\coloneqq \tfrac{m \pi}{M}\quad\mbox{and}\quad b\coloneqq\tfrac{2m\pi}{M}.\notag
\end{align}
By using the results in~\cite{Knapp09}, we obtain that if~$\sin(\frac{b}{2})\ne0$ then
\begin{align}
&{\textstyle\sum\limits_{k=1}^{M}}\cos\left(m\widehat c_k\right)=\tfrac{\sin(\frac{Mb}{2})}{\sin(\frac{b}{2})}\cos(a+\tfrac{(M-1)b}{2})
\quad\mbox{and}\quad
{\textstyle\sum\limits_{k=1}^{M}}\sin\left(m\widehat c_k\right)=\tfrac{\sin(\frac{Mb}{2})}{\sin(\frac{b}{2})}\sin(a+\tfrac{(M-1)b}{2}).\notag
\end{align}
Note that for~$1\le m<M$ we have that~$0<\frac{b}2<\pi$. Thus~$\sin(\frac{b}{2})\ne0$ and~$\sin(\frac{Mb}{2})=\sin(m\pi)=0$, we can conclude that
\begin{align}\label{sumCS-(M-1)2}
&{\textstyle\sum\limits_{k=1}^{M}}\cos\left(m\widehat c_k\right)=0=
{\textstyle\sum\limits_{k=1}^{M}}\sin\left(m\widehat c_k\right)\quad\mbox{for all}\quad
1\le m<M,
\end{align}
which implies that
\begin{equation}\label{Theta1j}
\Theta_{1j}=0\quad\mbox{for all}\quad 1<j\le M.
\end{equation}

Next we compute~$\Theta_{ij}$ for~$1<i<j\le M$. We find
\begin{align}
\Theta_{ij}&=\tfrac{2M}{r\pi^2} \tfrac{\sin\left(\tfrac{(i-1)}{2}\delta_{M}\right)}{\tfrac{(i-1)}{2}}\tfrac{\sin\left(\tfrac{(j-1)}{2}\delta_{M}\right)}{\tfrac{(j-1)}{2}}{\textstyle\sum\limits_{k=1}^{M}}\cos\left(\tfrac{(i-1)}{2}\widehat c_k\right)\cos\left(\tfrac{(j-1)}{2}\widehat c_k\right),&&\mbox{for odd $i$, odd~$j$};
\notag\\
\Theta_{ij}&=\tfrac{2M}{r\pi^2} \tfrac{\sin\left(\tfrac{(i-1)}{2}\delta_{M}\right)}{\tfrac{i-1}{2}}\tfrac{\sin\left(\tfrac{j}{2}\delta_{M}\right)}{\tfrac{j}{2}}{\textstyle\sum\limits_{k=1}^{M}}\cos\left(\tfrac{(i-1)}{2}\widehat c_k\right)\sin\left(\tfrac{j}{2}\widehat c_k\right),&&\mbox{for odd $i$, even $j$};
\notag\\
\Theta_{ij}&=\tfrac{2M}{r\pi^2} \tfrac{\sin\left(\tfrac{i}{2}\delta_{M}\right)}{\tfrac{i}{2}}\tfrac{\sin\left(\tfrac{(j-1)}{2}\delta_{M}\right)}{\tfrac{(j-1)}{2}}{\textstyle\sum\limits_{k=1}^{M}}\sin\left(\tfrac{i}{2}\widehat c_k\right)\cos\left(\tfrac{(j-1)}{2}\widehat c_k\right),&&\mbox{for even $i$, odd $j$};
\notag\\
\Theta_{ij}&=\tfrac{2M}{r\pi^2} \tfrac{\sin\left(\tfrac{i}{2}\delta_{M}\right)}{\tfrac{i}{2}}\tfrac{\sin\left(\tfrac{j}{2}\delta_{M}\right)}{\tfrac{j}{2}}{\textstyle\sum\limits_{k=1}^{M}}\sin\left(\tfrac{i}{2}\widehat c_k\right)\sin\left(\tfrac{j}{2}\widehat c_k\right),&&\mbox{for even $i$, even $j$}.\notag
\end{align}

Recalling the relations
\begin{align}
2\cos(z)\cos(w)&=\cos(z-w)+\cos(z+w),\notag\\
2\sin(z)\sin(w)&=\cos(z-w)-\cos(z+w),\notag\\
2\sin(z)\cos(w)&=\sin(z+w)+\sin(z-w),\notag 
\end{align}
we find that
\begin{align}
&2\cos\left(\tfrac{(i-1)}{2}\widehat c_k\right)\cos\left(\tfrac{(j-1)}{2}\widehat c_k\right)
=\cos(\tfrac{(j-i)}{2}\widehat c_k)+\cos(\tfrac{(i+j-2)}{2}\widehat c_k), &&\mbox{(odd $i$, odd~$j$)}
\notag\\
&2\cos\left(\tfrac{(i-1)}{2}\widehat c_k\right)\sin\left(\tfrac{j}{2}\widehat c_k\right)
=\sin(\tfrac{(j+i-1)}{2}\widehat c_k)+\sin(\tfrac{(j-i+1)}{2}\widehat c_k),&&\mbox{(odd $i$, even~$j$)}
\notag \\
&2\sin\left(\tfrac{i}{2}\widehat c_k\right)\cos\left(\tfrac{(j-1)}{2}\widehat c_k\right)
=\sin(\tfrac{(j+i-1)}{2}\widehat c_k)-\sin(\tfrac{-(i-j+1)}{2}\widehat c_k).
&&\mbox{(even $i$, odd~$j$)}
\notag\\
&2\sin\left(\tfrac{i}{2}\widehat c_k\right)\sin\left(\tfrac{j}{2}\widehat c_k\right)
=\cos(\tfrac{(j-i)}{2}\widehat c_k)-\cos(\tfrac{(i+j)}{2}\widehat c_k),&&\mbox{(even $i$, even~$j$)}.\notag
 \end{align}

Note that the frequencies on the right-hand side
\[
m\in\left\{\tfrac{(j-i)}{2},\tfrac{(i+j-2)}{2},\tfrac{(j+i-1)}{2},\tfrac{(j-i+1)}{2},\tfrac{(j+i)}{2}\right\}\quad\mbox{and}\quad n=\tfrac{(j-i-1)}{2}
\]
satisfy $1\le m<M$ and~$0\le n<M$, due to~$1< i< j\le M$. Hence, by~\eqref{sumCS-(M-1)2}, we obtain
\begin{equation}\label{Thetaij}
\Theta_{ij}=0\quad\mbox{for all}\quad 1<i<j\le M.
\end{equation}

From~\eqref{Theta1j} and~\eqref{Thetaij} we conclude that~$\Theta$ is a lower triangular matrix. Then, the symmetry of~$\Theta$ implies that~$\Theta$ is a diagonal matrix. Consequently, its eigenvalues are its diagonal entries given by
\begin{align}
\Theta_{11}&=\tfrac{M}{r\pi^2}M\delta_M^2=r,&&\mbox{}
\notag\\
\Theta_{ii}&=\tfrac{2M}{r\pi^2} \sin^2\left(\tfrac{(i-1)}{2}\delta_{M}\right)\tfrac4{(i-1)^2}{\textstyle\sum\limits_{k=1}^{M}}\cos^2\left(\tfrac{(i-1)}{2}\widehat c_k\right)&&\mbox{for odd $1\le i\le M$}\notag\\
\Theta_{ii}&=\tfrac{2M}{r\pi^2} \sin^2\left(\tfrac{i}{2}\delta_{M}\right)\tfrac4{i^2}{\textstyle\sum\limits_{k=1}^{M}}\sin^2\left(\tfrac{i}{2}\widehat c_k\right),&&\mbox{for even $2\le i\le M$}.\notag
\notag
\end{align}

In order to find an explicit expression for the eigenvalues we compute
\begin{align}
{\textstyle\sum\limits_{k=1}^{M}}\cos^2\left(n\widehat c_k\right)
&=\frac12{\textstyle\sum\limits_{k=1}^{M}}1+\cos\left(2n\widehat c_k\right)
=\frac{M}2+{\textstyle\sum\limits_{k=1}^{M}}\cos\left(2n\widehat c_k\right),\notag\\
{\textstyle\sum\limits_{k=1}^{M}}\sin^2\left(n\widehat c_k\right)
&=\frac12{\textstyle\sum\limits_{k=1}^{M}}1-\cos\left(2n\widehat c_k\right)
=\frac{M}2-{\textstyle\sum\limits_{k=1}^{M}}\cos\left(2n\widehat c_k\right),\notag
\end{align}
and use~\eqref{sumCS-(M-1)2}  to obtain
\[
{\textstyle\sum\limits_{k=1}^{M}}\cos^2\left(n\widehat c_k\right)=\frac{M}2={\textstyle\sum\limits_{k=1}^{M}}\sin^2\left(n\widehat c_k\right), \quad\mbox{for~$2n<M$}.
\]

For the case~$2n=M$, which appear in the case of even~$M$, we find that
\[
n\widehat c_k=\tfrac{n \pi}{M}
+(k-1)\tfrac{2n\pi }{M}=\tfrac{\pi}{2}
+(k-1)\pi,
\]
which implies
\[
{\textstyle\sum\limits_{k=1}^{M}}\sin^2\left(n\widehat c_k\right)=M\quad\mbox{for~$2n=M$}.
\]
We can conclude that the set~${\rm Eig}(\Theta)$ of eigenvalues is given by
\begin{align}
&{\rm Eig}(\Theta)=\{r\},&&\mbox{$M=1$};\notag\\
&{\rm Eig}(\Theta)=
\{r\}\;{\textstyle\bigcup}\;\left\{\tfrac{M^2}{r\pi^2}\left(\left.\tfrac{\sin\left(n\delta_{M}\right)}{n}\right)^2 \,\right|\; 1\le n\le\tfrac{M-1}2\right\},&&\mbox{odd~$M>1$};
\notag\\
&{\rm Eig}(\Theta)=
\left\{r,\tfrac{2M^2}{r\pi^2}\left(\tfrac{\sin\left(\tfrac{M}2\delta_{M}\right)}{\tfrac{M}2}\right)^2\right\},&&\mbox{$M=2$}.
\notag\\
&{\rm Eig}(\Theta)=
\left\{r,\tfrac{2M^2}{r\pi^2}\left(\tfrac{\sin\left(\tfrac{M}2\delta_{M}\right)}{\tfrac{M}2}\right)^2\right\}\;{\textstyle\bigcup}\;\left\{\tfrac{M^2}{r\pi^2}\left(\left.\tfrac{\sin\left(n\delta_{M}\right)}{n}\right)^2 \,\right|\; 1\le n\le\tfrac{M}2-1\right\},&&\mbox{even~$M>2$}.\notag
\end{align}
By~\cite[Lem.~4.6]{RodSturm20}.  the function~$t\mapsto\frac{\sin^2(\frac{\delta_M}2 t)}{t^2}$ is strictly decreasing for~$t\in(0,M]$. Hence,
\begin{align}
&\min{\rm Eig}(\Theta)=
\min\left\{r,\tfrac{M^2}{r\pi^2}\left(\tfrac{\sin\left(\tfrac{M-1}2\delta_M\right)}{\tfrac{M-1}2}\right)^2\right\},\quad&&\mbox{odd~$M\ge3$};
\notag\\
&\min{\rm Eig}(\Theta)=
\min\left\{r,\tfrac{2M^2}{r\pi^2}\left(\tfrac{\sin\left(\tfrac{M}2\delta_{M}\right)}{\tfrac{M}2}\right)^2\right\},\quad&&\mbox{even~$M=2$};
\notag\\
&\min{\rm Eig}(\Theta)=
\min\left\{r,\tfrac{2M^2}{r\pi^2}\left(\tfrac{\sin\left(\tfrac{M}2\delta_{M}\right)}{\tfrac{M}2}\right)^2,\tfrac{M^2}{r\pi^2}\left(\tfrac{\sin\left(\tfrac{M-2}2\delta_M\right)}{\tfrac{M-2}2}\right)^2\right\},\quad&&\mbox{even~$M\ge4$};\notag
\end{align}
which we can further make more explicit, recalling that~$\delta_M=\frac{r\pi}{M}$, as
\begin{align}
&\min{\rm Eig}(\Theta)=
\min\left\{r,r(\tfrac{2M}{(M-1)r\pi})^2\sin^2\left(\tfrac{(M-1)r\pi}{2M}\right)\right\},&&\mbox{for odd~$M\ge3$},\notag\\
&\min{\rm Eig}(\Theta)=
\min\left\{r,2r(\tfrac{2}{r\pi})^2\sin^2\left(\tfrac{r\pi}{2}\right)\right\},&&\mbox{for even~$M=2$}.\notag\\
&\min{\rm Eig}(\Theta)=
\min\left\{r,2r(\tfrac{2}{r\pi})^2\sin^2\left(\tfrac{r\pi}{2}\right),
r(\tfrac{2M}{(M-2)r\pi})^2\sin^2\left(\tfrac{(M-2)r\pi}{2M}\right)\right\},&&\mbox{for even~$M\ge4$}.\notag
\end{align}

Next, note that~$\sin(t)<t<2\sin(t)$ for all~$0<t\le\frac{\pi}2$. Indeed,  the function~$\phi(t)\coloneqq t-\sin(t)$ strictly increases for~$t\in(0,\frac{\pi}2]$ and satisfies~$\phi(0)=0$, and the function~$\psi(t)\coloneqq 2\sin(t)-t$  strictly increases for~$t\in[0,\frac{\pi}3)$, strictly decreases for~$t\in(\frac{\pi}3,\frac{\pi}2)$ and satisfies~$\psi(0)=0$ and~$\psi(\frac{\pi}2)=2-\frac{\pi}2>0$.
Therefore, we can conclude that 
\begin{align}
&r>r(\tfrac{2M}{(M-1)r\pi})^2\sin^2\left(\tfrac{(M-1)r\pi}{2M}\right)
&&\quad\mbox{for all $M>1$},\notag\\
&2r(\tfrac{2}{r\pi})^2\sin^2\left(\tfrac{r\pi}{2}\right)>r>r(\tfrac{2M}{(M-2)r\pi})^2\sin^2\left(\tfrac{(M-2)r\pi}{2M}\right),&&\quad\mbox{for all $M>2$},\notag
\end{align}
which leads us
\begin{subequations}\label{minEigTheta_exp}
\begin{align}
&\min{\rm Eig}(\Theta)=r,&&\mbox{for $M\in\{1,2\}$};\\
&\min{\rm Eig}(\Theta)=r(\tfrac{2M}{(M-1)r\pi})^2\sin^2\left(\tfrac{(M-1)r\pi}{2M}\right),&&\mbox{for odd~$M\ge3$},\\
&\min{\rm Eig}(\Theta)=
r(\tfrac{2M}{(M-2)r\pi})^2\sin^2\left(\tfrac{(M-2)r\pi}{2M}\right),&&\mbox{for even~$M\ge4$}.
\end{align}
\end{subequations}

Since~$\{\tfrac{(M-1)r\pi}{2M},\tfrac{(M-2)r\pi}{2M}\}\subset (0,\tfrac{r\pi}{2})\subset (0,\tfrac{\pi}{2})$, we find that  the sequence~$(\vartheta_M)_{M\in\bbN_+}$ as
\begin{align}
\vartheta_M\coloneqq\min{\rm Eig}(\Theta)\quad\mbox{satisfies}\quad \vartheta_M>\vartheta_\infty\coloneqq
r(\tfrac{2}{r\pi})^2\sin^2\left(\tfrac{r\pi}{2}\right)=\lim_{M\to+\infty}\vartheta_M>0.
\end{align}

Recalling~\eqref{normOP}, for~$d=1$ we find~$\norm{P_{\clE_{M}^\perp}^{\clU_{M}}}{\clL(H)}^2=\vartheta_M^{-1}<\vartheta_\infty^{-1}<+\infty$. For general~$d$, from~\cite[Lems.~4.3 and~4.4]{KunRod19-cocv} we obtain~$\norm{P_{\clE_{M}^\perp}^{\clU_{M}}}{\clL(H)}^2=\vartheta_M^{-d}<\vartheta_\infty^{-d}<+\infty$, for  actuators and eigenfunctions given by the Cartesian products in section~\ref{sS:acteigf}. This finishes the proof.
\end{proof}

\subsection{Proof of Theorem~\ref{T:Main}} We simply observe that Theorem~\ref{T:Main} is a consequence of~\cite[Thm.~4.1]{Rod20-eect}. Indeed, 
due to Lemmas~\ref{LA:A0sp}--\ref{LA:NN} and~\ref{LA:FFM}--\ref{LA:bddratP=suffalpha} above, and due to~\cite[Cor.3.9]{Rod20-eect}, we have that the Assumptions~3.1--3.4 and~3.6--3.8 in~\cite{Rod20-eect} are satisfied. Therefore,\cite[Thm.~4.1]{Rod20-eect} holds true as well as its corollary~\cite[Thm.~2.5]{Rod20-eect} (cf.~\cite[Sect.~4.5]{Rod20-eect}), from which we conclude the result stated in Theorem~\ref{T:Main}.
\qed

\section{Stabilizability for the fluid flow model}\label{S:stabil-fluid}
We follow a strategy as in section~\ref{S:stabil-fire}. We write
system~\eqref{sys-y-KS-Intro-Bu-fluid} with the feedback control~\eqref{FeedKy-N-fluid} as an abstract parabolic-like evolution equation satisfying the assumptions in~\cite{Rod20-eect}, and apply the results in~\cite{Rod20-eect}. Again, $\Omega$ is a rectangular domain~$\Omega=\bigtimes\limits_{i=1}^d(0,L_i)$, and we consider bi-Dirichlet, bi-Neumann, and periodic boundary conditions.

\subsection{Kuramoto--Sivashinsky equation in abstract form}\label{sS:KS-abstract-fluid}
While the state~$y$ of the flame propagation model, investigated in section~\ref{S:stabil-fire}, is given by a scalar function, $y(x,t)\in\bbR$, the state~$\bfy$ of the fluid flow model is given by a vector function, $\bfy(x,t)\in\bbR^d$.
Concerning the mathematical setting we simply take Cartesian products of the spaces and linear operators involved in section~\ref{S:stabil-fire}. Namely, as pivot space, $\bfH=\bfH'$, we consider the Cartesian product space~$\bfH\coloneqq \bigtimes\limits_{k=1}^d H$ . We also take the diffusion-like operator
\begin{subequations}\label{absOper-fluid}
\begin{equation}
\bfA\bfy=\bfA(\bfy_1,...,\bfy_d)\coloneqq(A\bfy_1,...,A\bfy_d),\qquad \bfy\in\bfV=\bigtimes\limits_{k=1}^d V\label{bfA-A}
\end{equation}
with
\[
\langle \bfA \bfz,\bfw\rangle_{\bfV',\bfV}\coloneqq (\bfz,\bfw)_\bfV\coloneqq (\bfz_1,\bfw_1)_V+...+(\bfz_d,\bfw_d)_V.
\]
The domain of the operator~$\bfA\in\clL(\bfV,\bfV')$
is given by~$\rmD(\bfA)=\bigtimes\limits_{k=1}^d\rmD(A)$ and we have that
\begin{equation}\notag%
 \bfA\bfy=    \nu_2\Delta^2\bfy-2  \nu_2\Delta\bfy+  \nu_2\bfy\quad\mbox{for}\quad \bfy\in\rmD(\bfA).
\end{equation}
We further set the state operators
\begin{align}
   \bfA_{\rm rc}\bfy&\coloneqq (\nu_1+2\nu_2)\Delta\bfy-  \nu_2\bfy+\nu_0\langle\widehat \bfy\cdot\nabla\rangle \bfy+\nu_0\langle \bfy\cdot\nabla\rangle\widehat \bfy,\\
\bfN(\bfy)&\coloneqq\nu_0\langle \bfy\cdot\nabla\rangle\widehat \bfy,
\end{align}
and the  feedback operator~\eqref{FeedKy-N-fluid}, which now reads
\begin{align}
 y\mapsto\bfK_{M}^{\lambda}(t,y)
 &= P_{\underline\clU_{M}}^{\underline\clE_{M}^\perp}
 \left(   \nu_1\Delta \bfy
 +\bfA_{\rm rc}\bfy+\bfN(\bfy)-\lambda \bfy\right)
 \label{FeedKy-N-abs0-fluid}\\
&= P_{\underline\clU_{M}}^{\underline\clE_{M}^\perp}
 \left(   \bfA \bfy
 +\bfA_{\rm rc}\bfy+\bfN(\bfy)-(\nu_2\Delta^2+\lambda\Id) \bfy\right)\label{FeedKy-N-abs-fluid}
\end{align}
\end{subequations}
and we can write~\eqref{sys-y-KS-Intro-Bu-fluid} as the evolutionary parabolic-like equation
\begin{align}\label{sys-y-KS-Bu-fluid}
 &\dot{\bfy}+\bfA  \bfy+ \bfA_{\rm rc} \bfy
 +\bfN(\bfy)
  =\bfK_{M}^{\lambda}(t,\bfy),\qquad
  \bfy(0)= \bfy_0.
\end{align}

\subsection{Actuators and auxiliary eigenfunctions}\label{sS:acteigf-fluid}
As actuators we take indicator functions of small subdomains as in section~\ref{sS:acteigf-fluid} in each coordinate of the vector state~$\bfy$, as already mentioned in section~\ref{sS:intro-main-res}. That is, the space spanned by the actuators is
\begin{equation}\label{clUM-explicit-fluid}
\underline\clU_{M}\coloneqq\bigtimes_{k=1}^d \clU_{M,k}
\end{equation}
where~$\clU_{M,k}=\clU_{M}$ is as in~\eqref{UM-explicit}, for each coordinate index~$k$. Analogously,
we take the auxiliary functions~$\clE_{M,k}=\clE_{M}$ as in~\eqref{EM-explicit}, for each coordinate index~$k$. The space spanned by such eigenfunctions is  
\begin{equation}\label{EM-explicit-fluid}
\underline\clE_{M}\coloneqq \bigtimes_{k=1}^d\clE_{M,k}.
\end{equation}

\subsection{The main result}

Concerning the fluid flow model, the main stabilizability result of this manuscript is the following.
\begin{theorem}\label{T:Main-fluid}
System~\eqref{sys-y-KS-Bu-fluid} with the state and feedback operators in~\eqref{absOper-fluid} is semiglobally exponentially stable. More precisely, for arbitrary given~$\lambda>0$ and~$R>0$, there exists~$M\in\bbN_+$, $0<\mu\le\lambda$, and~$C\ge1$ such that the solution satisfies
\[
\norm{\bfy(t)}{V} \le C\rme^{-\mu t} \norm{\bfy_0}{V},\quad\mbox{for all}\quad t\ge0 \quad\mbox{and all}\quad \bfy_0\in \bfV\quad\mbox{with}\quad \norm{\bfy_0}{\bfV}<R.
\]
\end{theorem}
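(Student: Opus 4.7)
The plan is to follow the same template as the proof of Theorem~\ref{T:Main}, verifying that the vector system~\eqref{sys-y-KS-Bu-fluid} fits the abstract framework of~\cite{Rod20-eect} and then invoking~\cite[Thm.~4.1]{Rod20-eect}. Thanks to the Cartesian product structure~$\bfH=H^d$, $\bfV=V^d$, $\rmD(\bfA)=\rmD(A)^d$, and analogously for~$\underline\clU_M$ and~$\underline\clE_M$, the bulk of the work reduces to applying Lemmas~\ref{LA:A0sp}--\ref{LA:NN} and~\ref{LA:FFM}--\ref{LA:bddratP=suffalpha} coordinatewise. The two blocks to check are the state operator assumptions and the feedback triple assumptions; then the abstract result concludes.

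For the state operators, the analogs of Lemmas~\ref{LA:A0sp} and~\ref{LA:A0cdc} are immediate from the scalar case via the product construction. For the rough-coefficient operator~$\bfA_{\rm rc}(t)$ I would impose a vector analog of Assumption~\ref{A:haty} on~$\widehat\bfy$ and use the Sobolev embeddings~$V\hookrightarrow L^\infty(\Omega)\cap W^{1,6}(\Omega)$, valid for~$d\le 3$, so that the two rough linear terms~$\langle\widehat\bfy\cdot\nabla\rangle\bfy$ and~$\langle\bfy\cdot\nabla\rangle\widehat\bfy$ are bounded in~$\bfH$ by~$C_{\widehat\bfy}\norm{\bfy}{\bfV}$ via H\"older, as in the proof of Lemma~\ref{LA:A1}. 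For the genuinely nonlinear convective term~$\nu_0\langle\bfy\cdot\nabla\rangle\bfy$, I would split
\[
\nu_0\langle\bfy_1\cdot\nabla\rangle\bfy_1-\nu_0\langle\bfy_2\cdot\nabla\rangle\bfy_2=\nu_0\langle\bfz\cdot\nabla\rangle\bfy_1+\nu_0\langle\bfy_2\cdot\nabla\rangle\bfz,\qquad\bfz\coloneqq\bfy_1-\bfy_2,
\]
and bound each summand in~$\bfH$ by the product of two~$L^4(\Omega)^d$-norms via H\"older, then use~$\bfV\hookrightarrow W^{1,4}(\Omega)^d$ for~$d\le 3$ to derive an inequality of exactly the form demanded by Lemma~\ref{LA:NN}, with~$n=1$ and~$(\zeta_{1},\zeta_{2},\delta_{1},\delta_{2})=(1,0,1,0)$.

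The feedback triple assumptions then transfer through the product structure: the direct sum~$\bfH=\underline\clU_M\oplus\underline\clE_M^\perp$ and the uniform bound on~$\norm{P_{\underline\clU_M}^{\underline\clE_M^\perp}}{\clL(\bfH)}$ follow from Lemmas~\ref{LA:DS} and~\ref{LA:bddratP=suffalpha} via~\cite[Lems.~4.3 and~4.4]{KunRod19-cocv}, since the oblique projection is block-diagonal with scalar blocks~$P_{\clU_M}^{\clE_M^\perp}$; the eigenvalue ratio~$\overline\alpha_M/\overline\alpha_{M_+}$ is identical to the scalar one (each eigenvalue simply has its multiplicity multiplied by~$d$); and~$\bfF_M(\bfq)=\nu_2\Delta^2\bfq+\lambda\bfq$ acts coordinatewise on~$\underline\clE_M$, so the conditions~\eqref{FFM-F}--\eqref{FFM-A-F} reduce componentwise to the scalar estimates of Lemma~\ref{LA:FFM}, again with~$(\fkr,\beta_1,\beta_2)=(\fkr,1,0)$ for any~$\fkr>1$.

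The main obstacle I anticipate is the nonlinearity estimate in dimensions~$d=2,3$: the mixed factor~$\norm{\bfy_2}{L^4(\Omega)^d}$ involves the function itself rather than its gradient, so it couples differently from the scalar quadratic~$\tfrac12\nu_0\norm{\nabla y}{\bbR^d}^2$ where both factors are first-order; fortunately~$\bfV\subset W^{2,2}(\Omega)^d$ embeds into both~$L^\infty(\Omega)^d$ and~$W^{1,4}(\Omega)^d$ for all~$d\le 3$, so both factors can comfortably be bounded by~$\norm{\,\Bigcdot\,}{\bfV}$ and the scalar argument carries through with room to spare. With all assumptions verified, applying~\cite[Thm.~4.1]{Rod20-eect} through~\cite[Cor.~3.9]{Rod20-eect} concludes the proof exactly as in the proof of Theorem~\ref{T:Main}.
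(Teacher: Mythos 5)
Your proposal is correct and follows essentially the same route as the paper: a coordinatewise reduction of the state-operator and feedback-triple assumptions to the scalar Lemmas~\ref{LA:A0sp}--\ref{LA:NN} and~\ref{LA:FFM}--\ref{LA:bddratP=suffalpha}, direct H\"older/Sobolev estimates for the convection terms yielding the exponents $(\zeta_1,\zeta_2,\delta_1,\delta_2)=(1,0,1,0)$, and a final appeal to~\cite[Thm.~4.1]{Rod20-eect}. The only deviation is cosmetic: you pair the nonlinearity via $L^4\times L^4$ where the paper uses $L^3\times L^6$ and $L^\infty\times L^2$, and both work for $d\le3$.
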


\subsection{On the state operators}
Again, we will need the following boundedness assumption on the targeted trajectory.
\begin{assumption}\label{A:haty-fluid}
We have that~$\norm{\widehat \bfy}{L^\infty(\bbR_+,(W^{1,2}(\Omega))^d)}\le C_{\widehat y}<+\infty$.
\end{assumption}

We need to show the analogs of Lemmas~\ref{LA:A0sp}--\ref{LA:NN} , for the state operators~$\bfA$, $\bfA_{\rm rc}$, $\bfN$.

For simplicity we do not ``repeat'' here the statements of those analog Lemmas. We will simply sketch the main reason (why) they hold true.

To see that the analogs of Lemmas~\ref{LA:A0sp} and~\ref{LA:A0cdc} are a consequence of those Lemmas themselves it is enough to observe that~$\bfA$ acts coordinatewise through~$A$; see~\eqref{bfA-A}.

To show the analog of Lemma~\ref{LA:A1}, we observe that
\[
\norm{\bfA_{\rm rc}(t)\bfy}{\bfH}\le C_0 \left(\norm{\Delta \bfy}{\bfH}+\norm{\bfy}{\bfH}+\norm{\langle\bfy\cdot\nabla\rangle \widehat \bfy+\langle\widehat \bfy\cdot\nabla\rangle \bfy}{\bfH}\right)\le C_1(1+C_{\widehat \bfy})\norm{\bfy}{W^{2,2}(\Omega)^d},
\]
with $C_{\widehat \bfy}$ as in Assumption~\ref{A:haty-fluid}, which we can obtain by using the fact that
\[
\norm{\langle\bfy\cdot\nabla\rangle \widehat \bfy+\langle\widehat \bfy\cdot\nabla\rangle \bfy}{\bfH}\le C_2\left(\norm{\bfy}{L^\infty(\Omega)^d}\norm{\widehat \bfy}{W^{1,2}(\Omega)^d}+\norm{\widehat \bfy}{L^3(\Omega)^d}\norm{\nabla \bfy}{L^6(\Omega)^{d^2}}\right)
\]
and then, for $1\le d\le 3$, due to suitable Sobolev embeddings and to the Agmon inequality, it follows the analog of Lemma~\ref{LA:A1}, by also recalling that~$\bfV\xhookrightarrow{}W^{2,2}(\Omega)^d$.

To show the analog of Lemma~\ref{LA:NN} we note that
with~$\bfz\coloneqq \bfy^{[1]}-\bfy^{[2]}$, we have
\begin{align}
\bfN(\bfy^{[1]})-\bfN(\bfy^{[2]})=\nu_0\langle\bfy^{[1]}\cdot\nabla\rangle \bfz+\nu_0\langle \bfz\cdot\nabla\rangle \bfy^{[2]}\notag
\end{align}
and obtain, recalling that $1\le d\le 3$,
\begin{align}
\norm{\bfN(\bfy^{[1]})-\bfN(\bfy^{[2]})}{H}
&\le C_1\norm{\bfy^{[1]}}{L^3}\norm{\nabla \bfz}{L^6}+C_1\norm{\bfz}{L^\infty}\norm{\nabla\bfy^{[2]}}{L^2}\notag\\
&\le C_2\left(\norm{\bfy^{[1]}}{\bfV}+\norm{\bfy^{[2]}}{\bfV}\right)\norm{\bfz}{\bfV}.\notag
\end{align}
Thus, the Lemma follows with~$n=1$, $C_\bfN=C_2$, and~$(\zeta_{1j},\zeta_{2j},\delta_{1j},\delta_{2j})=(1,0,1,0)$.

\subsection{On the feedback triple}\label{sS:feedtriple}
We show that the analogs of Lemmas~\ref{LA:FFM}--\ref{LA:bddratP=suffalpha} hold true.
We start by defining the mapping
\[
\bfF_{M}\colon \underline\clE_{M}\to \underline\clE_{M},\quad\mbox{with}\quad\bfF_{M}(q)\coloneqq \nu_2\Delta^2 q+\lambda q.
 \]
To see that the analog of Lemma~\ref{LA:FFM} holds true is enough to observe that~$\bfF_{M}$ acts coordinatewise, $\bfF_{M}(q)=\bfF_{M}(q_1,\dots,q_d)=( \nu_2\Delta^2 q_1+\lambda q_1,\dots,\nu_2\Delta^2 q_d+\lambda q_d)$.

Next, to show the analog of Lemma~\ref{LA:DS}, we observe that giving $\bfh=(\bfh_1,\dots,\bfh_d)\in\bfH$ we can use Lemma~\ref{LA:DS} to write for each coordinate index~$k\in\{1,\dots,d\}$, $\bfh_k=\bfh_k^U+\bfh_k^E$, with~$(\bfh_k^U,\bfh_k^E)\in\clU_{M,k}\times \clE_{M,k}^\perp$. Hence
 $\bfh=(\bfh_1^U,\dots,\bfh_d^U)+(\bfh_1^E,\dots,\bfh_d^E)$, with $(\bfh_1^U,\dots,\bfh_d^U)\in\underline\clU_{M}$ and $(\bfh_1^E,\dots,\bfh_d^E)\in\underline\clE_{M}^\perp$. Therefore, we can conclude that~$\bfH=\underline\clU_{M}+\underline  \clE_{M}^\perp$. It remains to observe that~$\underline\clU_{M}\bigcap\underline  \clE_{M}^\perp=\{0\}$ which is a consequence of the identities~$\clU_{M,k}\bigcap \clE_{M,k}^\perp=\{0\}$ in each coordinate index~$k$.

Finally we focus on the analog of Lemma~\ref{LA:bddratP=suffalpha}.
Let us denote, the analogs of~\eqref{VDAalphaM} as
 \begin{align}
\overline \alpha_{M}&\coloneqq \max\{\alpha_i\mid \mbox{there is }\phi\in\underline\clE_{M}
 \mbox{ such that } \bfA\phi=\alpha_i\phi\},\notag\\
 \overline\alpha_{M_{+}}&\coloneqq \min\{\alpha_i\mid \mbox{there is }\phi\in\underline\clE_{M}^\perp
 \mbox{ such that } \bfA\phi=\alpha_i\phi\}. \notag
\end{align}
Reasoning coordinatewise, we can see that~$\overline \alpha_{M}=\max\{\overline \alpha_{M,k}\mid 1\le k\le d\}$ and~$\overline \alpha_{M_+}=\min\{\overline \alpha_{M_+,k}\mid 1\le k\le d\}$ where the~$\overline \alpha_{M,k}$s and~$\overline \alpha_{M_+,k}$s are as in~\eqref{VDAalphaM}, for each coordinate~$k$. Hence, $\sup\limits_{M\in\bbN_+}\frac{\overline\alpha_{M}}{\overline\alpha_{M_{+}}}<+\infty$.
It remains to show that~
$\sup\limits_{M\in\bbN_+}\norm{P_{\underline\clE_{M}^\perp}^{\underline\clU_{M}}}{\clL(H)}<+\infty$.
For that purpose, we recall that we can write every~$\bfh=(\bfh_1,\dots,\bfh_d)\in\bfH$ in a unique way as $\bfh=(\bfh_1^U,\dots,\bfh_d^U)+(\bfh_1^E,\dots,\bfh_d^E)$, with $(\bfh_1^U,\dots,\bfh_d^U)\in\underline\clU_{M}$ and $(\bfh_1^E,\dots,\bfh_d^E)\in\underline\clE_{M}^\perp$. Thus,  $P_{\underline\clE_{M}^\perp}^{\underline\clU_{M}}\bfh=(\bfh_1^E,\dots,\bfh_d^E)$ and $\norm{P_{\underline\clE_{M}^\perp}^{\underline\clU_{M}}\bfh}{\bfH}^2\le
\max\left\{ \norm{P_{\clE_{M,k}^\perp}^{\clU_{M,k}}}{L^2(0,L_k)}^2\mid 1\le k\le d\right\}\norm{\bfh}{\bfH}^2$. By Lemma~\ref{LA:bddratP=suffalpha} we have that
$\norm{P_{\clE_{M,k}^\perp}^{\clU_{M,k}}}{L^2(0,L_k)}^2<+\infty$, thus the analog of Lemma~\ref{LA:bddratP=suffalpha} holds true.
As a remark, we mention that $\norm{P_{\clE_{M,k}^\perp}^{\clU_{M,k}}}{L^2(0,L_k)}$ is independent of~$k$, since it is independent of~$L_k$. This can be concluded from~\eqref{minEigTheta_exp} and~\eqref{normOP} (cf.~\cite[discussion at end of sect.~4.6]{KunRod19-cocv}).

\section{Numerical simulations}\label{S:simul}
Here we include the results of numerical simulations showing the stabilizing performance of the feedback control to targeted trajectories. For simplicity, we restrict ourselves to the one-dimensional case under periodic boundary conditions.

For the spatial discretization we use spectral elements and compute the solution of spectral Galerkin approximations based on ``the'' first ~$N$ eigenfunctions. For the temporal discretization we use the implicit Crank--Nicolson scheme for the linear operator and an explicit Adams--Bashford extrapolation for the nonlinearity and for the feedback control.

For the free dynamics, in an toy example, we also check the convergence of the numerical solution to an exact spatio-temporal dependent solution.

The spatial domain is the one-dimensional unit Torus~$\Omega=\bbT^1_1=\frac{1}{2\pi}\bbT^1\sim[0,1)\subset\bbR$ (i.e., we consider periodic boundary conditions). 

\subsection{Discretization}
We briefly mention the way we have discretized the equations. We present a discretization which allow us to deal in a direct straightforward way with the oblique projection feedback control. Our main goal is to confirm the stabilizing performance of the feedback control, rather than a comparison to other potential discretizations.

Let us consider simultaneously the free dynamics and the controlled systems, namely, \eqref{sys-haty-KS-Intro} and~\eqref{sys-tildey-KS-Intro-Bu} for the flame propagation model,  or \eqref{sys-haty-KS-Intro-fluid} and~\eqref{sys-tildey-KS-Intro-Bu-fluid} for the fluid flow model. 

Essentially, we seek for approximations~$z^N$ of the states of those systems as~$z^N(t)=\sum\limits_{n=1}^N z^N_n(t)e_n\in\clE_N$. That is, $z^N(t)$ lives in the linear span of ``the'' first~$N$ (periodic) eigenfunctions $e_n$, $1\le n\le N$, of the Laplacian.  We compute~$z^N$ by solving Galerkin approximations as follows (cf.~\cite[sect.~3.4]{Rod21-jnls}, \cite[sect.~4.3]{Rod20-eect}, \cite[Ch.~3, sect.~3.2]{Temam01}),
\begin{subequations}\label{galerkin}
\begin{align}
 &\dot{z}^N+ \nu_2\Delta^2 z^N+ \nu_1\Delta z^N+\nu_0 P_{\clE_N}\clN(z^N)=C_{\rm feed}P_{\clE_N}K(t,z^N)+P_{\clE_N}f,\notag\\
 &z^N_n(0,\Bigcdot)=P_{\clE_N}z_0,\notag
\end{align}
\end{subequations}
where~$C_{\rm feed}=0$ for the free dynamics and~$C_{\rm feed}=1$ for the controlled dynamics. Here, in dimension one,
\begin{align}
&\clN(z^N)=\tfrac12\norm{\p_x z^N}{\bbR}^2,\quad&&\mbox{for the flame propagation model~\eqref{sys-haty-KS-Intro}};\notag\\
&\clN(z^N)=z^N\p_x z^N,\quad&&\mbox{for the fluid flow model~\eqref{sys-haty-KS-Intro-fluid}}.
\end{align}

Recall that $P_{\clE_N}$ stands for the orthogonal projection in the pivot space~$H=L^2(\Omega)$ onto~$\clE_N$, and also that both~$\Delta^2$ and~$\Delta$ map~$\clE_N$ into itself. Essentially, we have to solve a system of~$N$ ordinary differential equations (one equation for each spectral coordinate index~$n$) as 
\begin{subequations}\label{galerkin-coord}
\begin{align}
 &\dot{z}^N_n=- (\alpha_n^2-\alpha_n)z^N_n-\left(P_{\clE_N}\clN(z^N)\right)_n+C_{\rm feed}\left(P_{\clE_N}K(t,z^N)\right)_n+f_n,\\
 &z^N_n(0,\Bigcdot)=z_{0,n}.
\end{align}
\end{subequations}

Thus, hereafter our computed solutions~$\widehat z$ and~$\widetilde z$ are in fact  linear combinations  of the first eigenfunctions in the spatial interval~$[0,1)$ with coordinates
given as in~\eqref{galerkin-coord}.

We compute the orthogonal projections~$P_{\clE_N}h$  above by firstly evaluating $h$ in the nodes of a (regular) mesh/partition, for a space-step~$0<x^{\rm step}<1$,
\begin{equation}\label{disc-spInt}
[0,1)_{\rm disc}= \{nx^{\rm step}\mid 0\le n\le\tfrac{1}{x^{\rm step}}\},\qquad\tfrac{1}{x^{\rm step}}\in\bbN_+,
\end{equation}
 of the spatial interval~$[0,1)$ and use the associated finite-element mass matrix, corresponding to periodic boundary conditions, as an auxiliary tool, to compute the coordinates of~$P_{\clE_N}h$ following~\cite[sect.~8.1]{RodSturm20}. 
The finite-element basis vectors were taken as the classical hat-functions (piecewise-linear elements).

Afterwards, as temporal discretization we follow an implicit-explicit scheme, namely, an (implicit) Crank--Nicolson scheme for the affine component~$-(\alpha_n^2-\alpha_n)z^N_n+f_n$ and an (explicit) Adams-Bashford extrapolation for the nonlinear component~$-\nu_0\left(P_{\clE_N}\clN(z^N)\right)_n+C_{\rm feed}\left(P_{\clE_N}K(t,z^N)\right)_n$.
The temporal step~$0<t^{\rm step}$ was taken uniform,
\begin{equation}\label{disc-tInt}
[0,+\infty)_{\rm disc}= \{nt^{\rm step}\mid n\in\bbN\}.
\end{equation}

\begin{remark}
Essentially, the auxiliary finite-element mass matrix is used to compute the orthogonal projection of the nonlinearity onto the Galerkin space and also to compute the oblique projection based feedback control. This strategy is not restricted to any particular case of boundary conditions.
However, here we should mention that, for the particular case of periodic boundary conditions the use of the (discrete) Fourier transform (computed through the Fast Fourier Transform algorithm) is often used to solve the free dynamics equations on the so-called frequency space (at least for the fluid flow model). On the other hand, we do not have at our disposal a discretization of the  feedback control operator in the mentioned frequency space. For that we would need a discretization on the frequency space of the oblique projection~$P_{\clU_M}^{\clE_M^\perp}$. Working on the frequency space, avoiding the finite-element auxiliary computations,  will likely make the computations faster, but such approach is restricted to periodic boundary conditions (or, at least the authors do not know how to use it for other boundary conditions). On the other hand, we recall that our main goal is the confirmation of the stabilizing performance of the feedback control rather than speeding up the solver. 
\end{remark}
\begin{remark}
Though, for simplicity, we restrict the simulations for periodic boundary conditions on a one-dimensional interval, our discretization can be modified for higher-dimensional rectangles under the same periodic boundary conditions, for that we have  just  to replace the respective eigenfunctions and the ``periodic'' finite-element mass matrix.
Note also that for periodic boundary conditions, we actually have no boundary conditions, because we can see the evolution in the Torus which is a spatial boundaryless domain. However, our discretization can be modified
for other (actual) homogeneous  boundary conditions as bi-Dirichlet and bi-Neumann (i.e., with $g=0$ in system~\eqref{sys-haty-KS-Intro}). Again, we just need to replace the respective eigenfunctions and the finite-element mass matrix by the ones corresponding to the given homogeneous boundary conditions.
Finally, for nonhomogeneous bi-Dirichlet and bi-Neumann boundary conditions we can, for example, reduce the problem to the case of homogeneous boundary conditions if we know a function~$G$ with trace~$\clG G\rest{\p\Omega}=g$, by using the change of variables~$\breve y=y-G$ and solving for~$\breve y$, which will solve a similar system under homogeneous boundary conditions, $\clG \breve y\rest{\p\Omega}=0$. We can also find such function~$G$ numericaly (e.g., as the solution of the heat equation~$\frac{\p}{\p t} G-\nu_1\Delta G=0$ with~$G(0)=0$ and~$\clG G\rest{\p\Omega}=g$).
\end{remark}

 \subsection{Numerical results for the fluid flow model}

 We choose the parameters of the free dynamics as
 \begin{subequations}\label{param-fluid}
\begin{align}
\nu_2=10^{-6},\quad \nu_1=10^{-2},\quad \nu_0=1,\quad f=0 ,
\end{align}
the control parameters as
\begin{align}
r=0.2,\quad M=35,\quad \lambda=10,
\end{align}
and the initial states for targeted~$\widehat y$ and controlled~$\widetilde y$ trajectories as
\begin{align}
\widehat y_0=1+\norm{\sin(4\pi x)}{\bbR},\qquad \widetilde y_0=\cos(2\pi x)(1+\sin(2\pi x)).
\end{align}
 We solve, for time~$t\in[0,T]$, the $N$-dimensional Galerkin approximation, with
 \begin{align}
 T=1.5,\quad N=200.
 \end{align}
 The temporal and spatial time steps, in~\eqref{disc-spInt} and~\eqref{disc-tInt} were taken as
  \begin{align}
 t^{\rm step}=10^{-4},\quad  x^{\rm step}=10^{-4}.
 \end{align}
 
 \end{subequations}
 
 With the parameters~$\nu_2$ and $\nu_1$ above, we can see that the Kuramoto--Sivashinsky linear operator~$-\nu_2\Delta^2-\nu_1\Delta$ has $31$ nonnegative (unstabe) eigenvalues~$\alpha_n$. Indeed, under periodic boundary conditions, the ordered eigenvalues~$\alpha_{2j-1}<\alpha_{2j}=\alpha_{2j+1}$ (repeated accordingly to their multiplicity), $j\in\bbN_+$, satisfy
 \begin{align}
 &\alpha_{30}=\alpha_{31}=-\nu_2(30\pi)^4+\nu_1(30\pi)^2\approx +9.9251,\notag\\
  &\alpha_{32}=\alpha_{33}=-\nu_2(32\pi)^4+\nu_1(32\pi)^2\approx-1.0761\notag.
  \end{align}
  That is why we took a number of actuators~$M$ slightly larger as above in~\eqref{param-fluid}.

 In Fig.~\ref{Fig:ff-free} we plot the targeted trajectory and the uncontrolled trajectory (issued from different initial states). The figure shows that it is likely that the latter does not converge to the former as time increases.
In Fig.~\ref{Fig:ff-feed} we see that the latter converges to the former  if we use our feedback control. 

 \begin{figure}[ht]
\centering
\subfigure[targeted trajectory, $\widehat y(0,x)=\widehat y_0$.\label{Fig:ff-freehat}]
{\includegraphics[width=0.45\textwidth,height=0.31\textwidth]{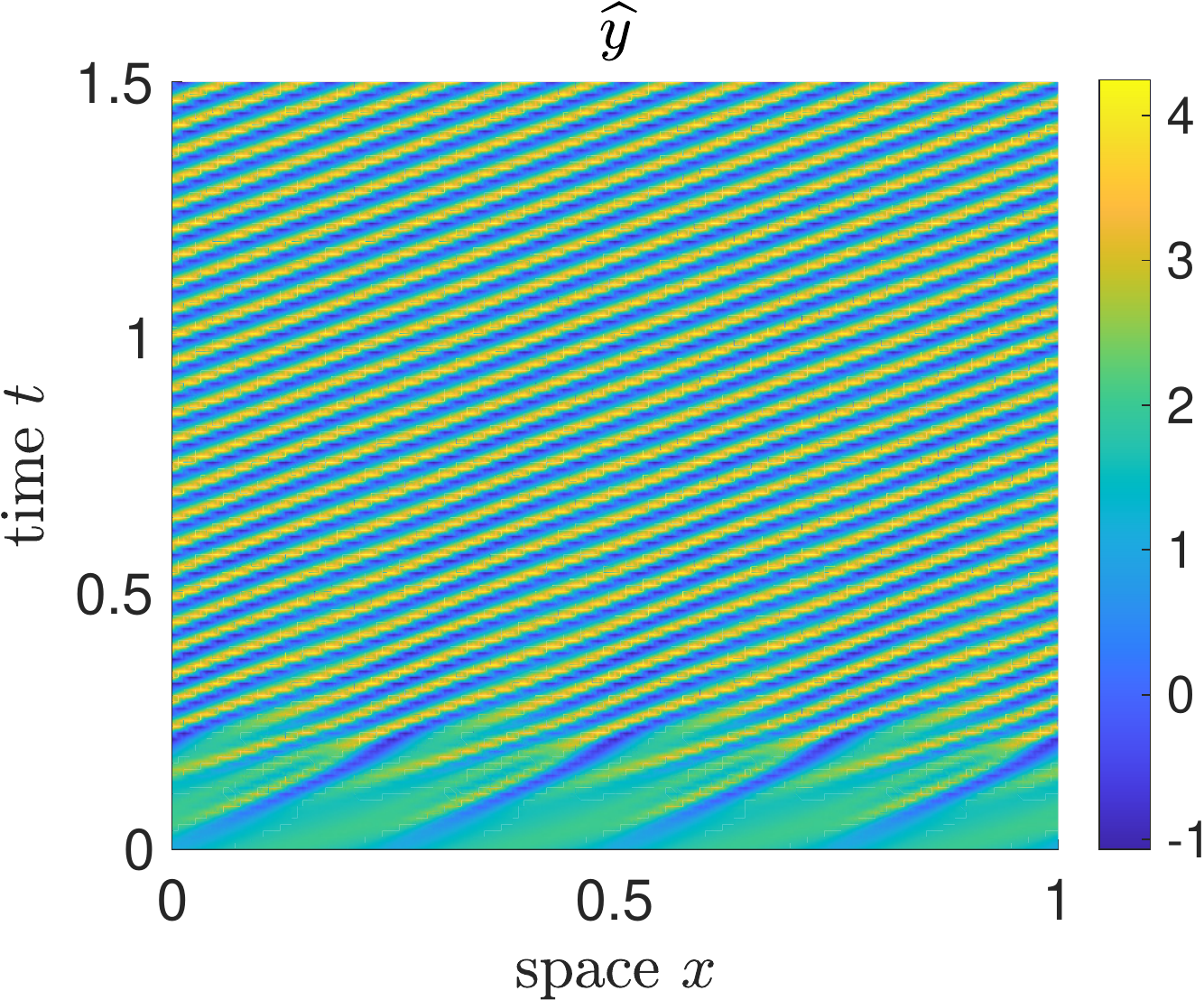}}
\qquad
\subfigure[uncontrolled trajectory, $\widetilde y_{\rm unc}(0,x)=\widetilde y_{0}$.\label{Fig:ff-freetil}]
{\includegraphics[width=0.45\textwidth,height=0.31\textwidth]{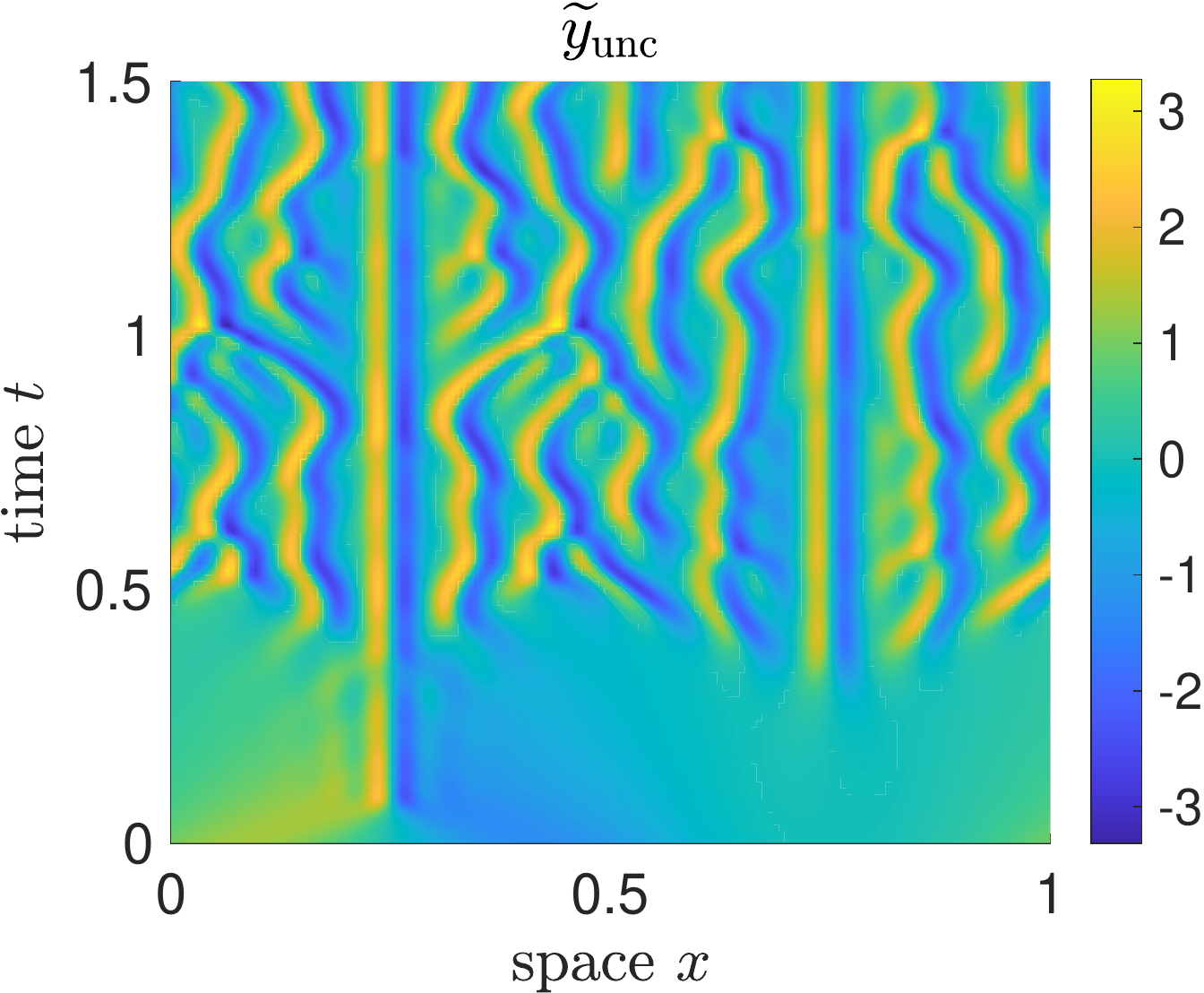}}
\caption{Fluid  model: free dynamics.}
\label{Fig:ff-free}
\end{figure}

 \begin{figure}[ht]
\centering
\subfigure[controlled trajectory, $\widetilde y(0,x)=\widetilde y_{0}$.]
{\includegraphics[width=0.45\textwidth,height=0.31\textwidth]{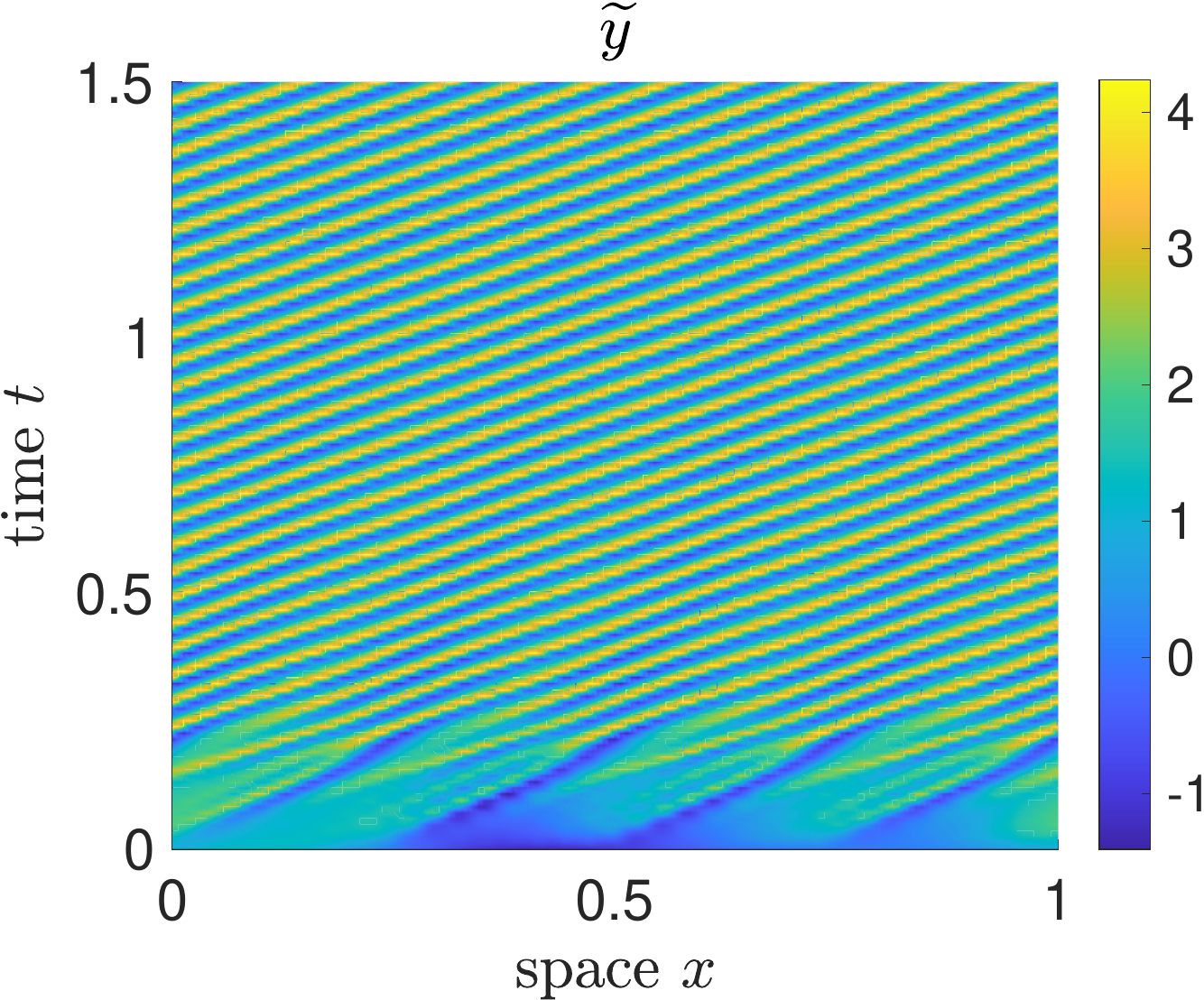}}
\qquad
\subfigure[evolution of distance to target.]
{\includegraphics[width=0.45\textwidth,height=0.31\textwidth]{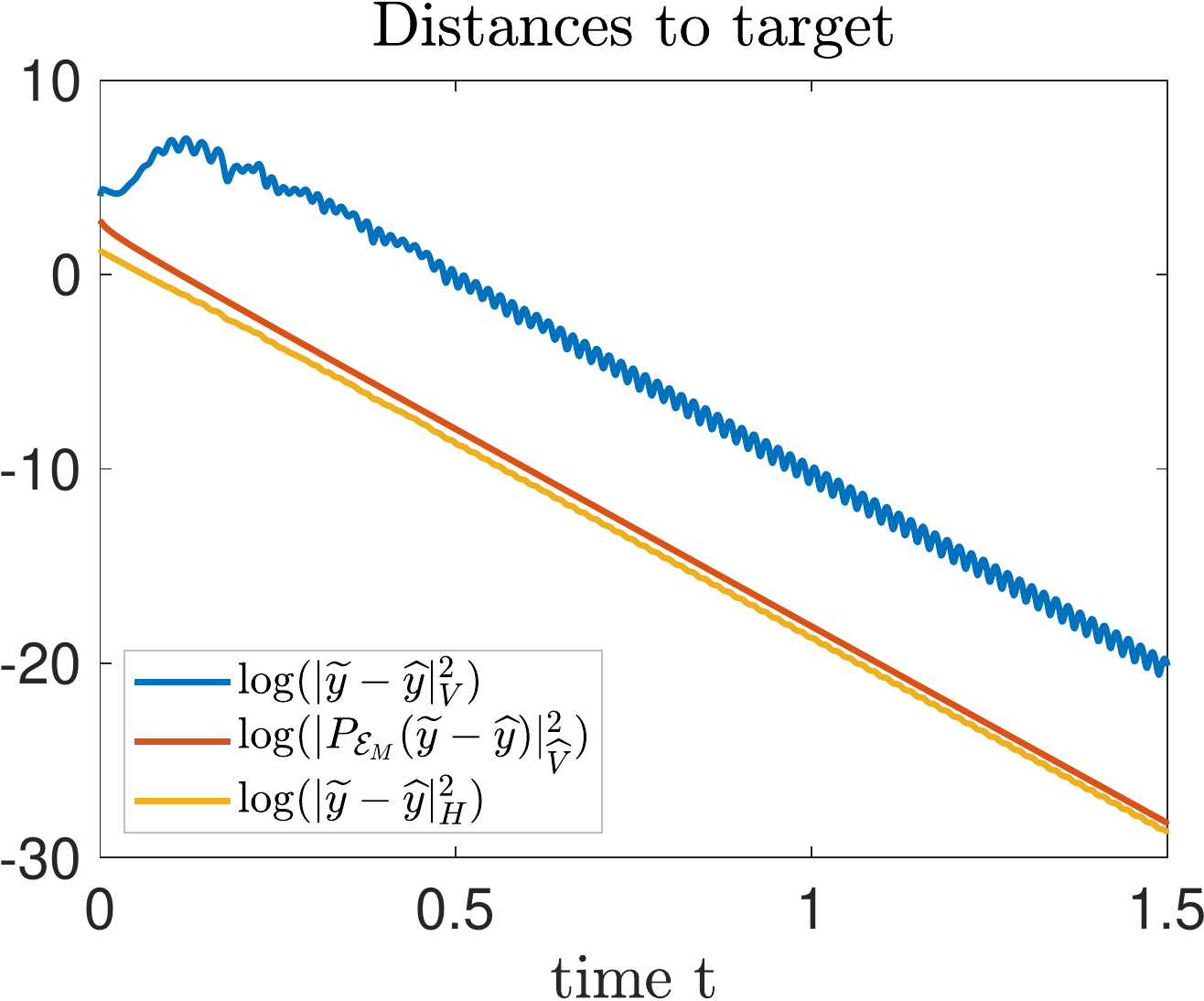}}
\caption{Fluid model: controlled dynamics.}
\label{Fig:ff-feed}
\end{figure}

Finally in Fig.~\ref{Fig:ff-magu} we plot the coordinates of the feedback control. The $M=M_\sigma$ actuators, whose total support covers~$r100\%=20\%$ of the spatial domain, were taken with supports as in~\eqref{omeM1D}--\eqref{cM}, with~$L_n=1$, $n=1$. Such actuators are illustrated in Fig.~\ref{Fig:ff-act}.
 \begin{figure}[ht]
\centering
\subfigure[signed magnitudes of the coordinates~$u_j$.\label{Fig:ff-magu}]
{\includegraphics[width=0.45\textwidth,height=0.31\textwidth]{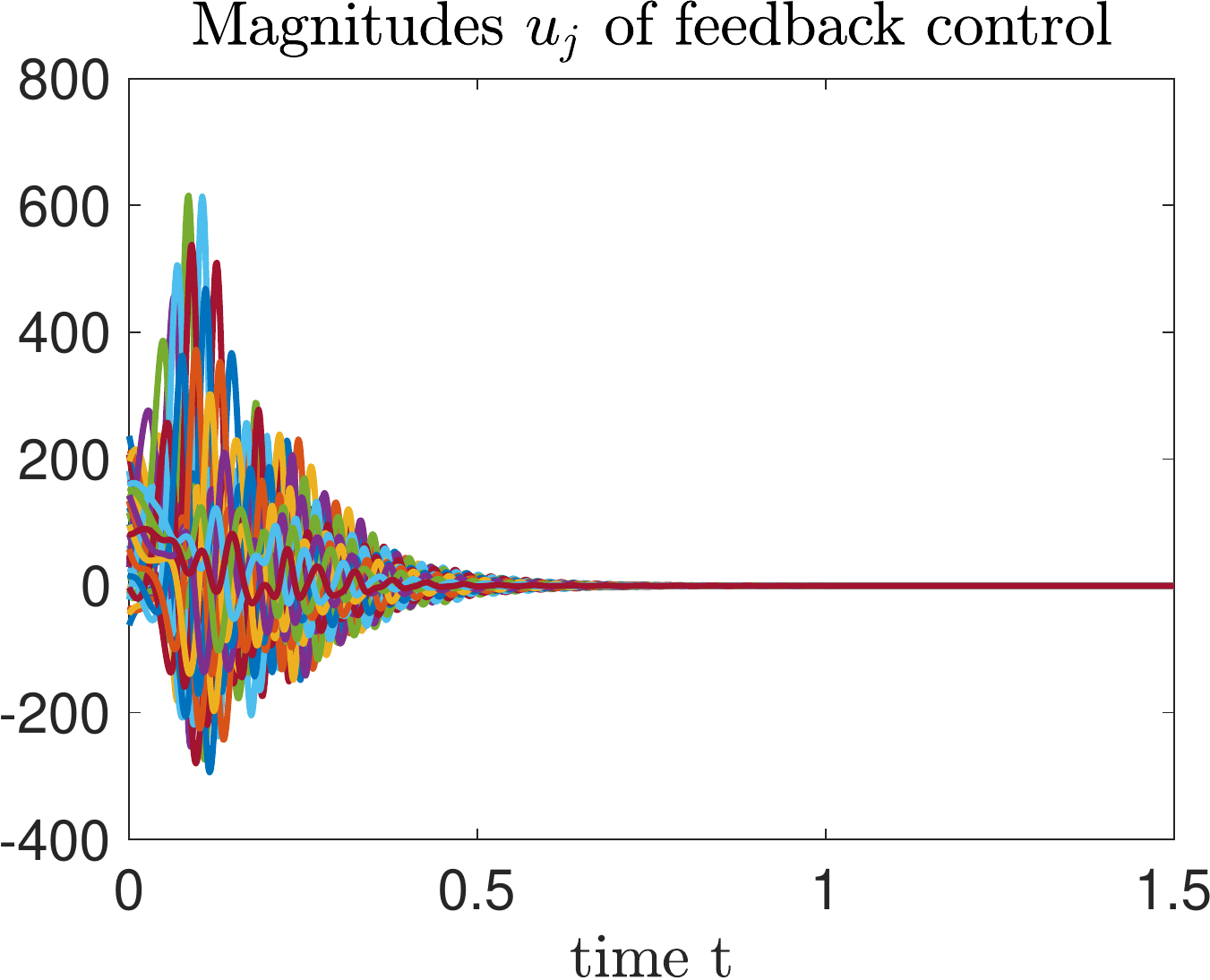}}
\qquad
\subfigure[a linear combination of the actuators.\label{Fig:ff-act}]
{\includegraphics[width=0.45\textwidth,height=0.31\textwidth]{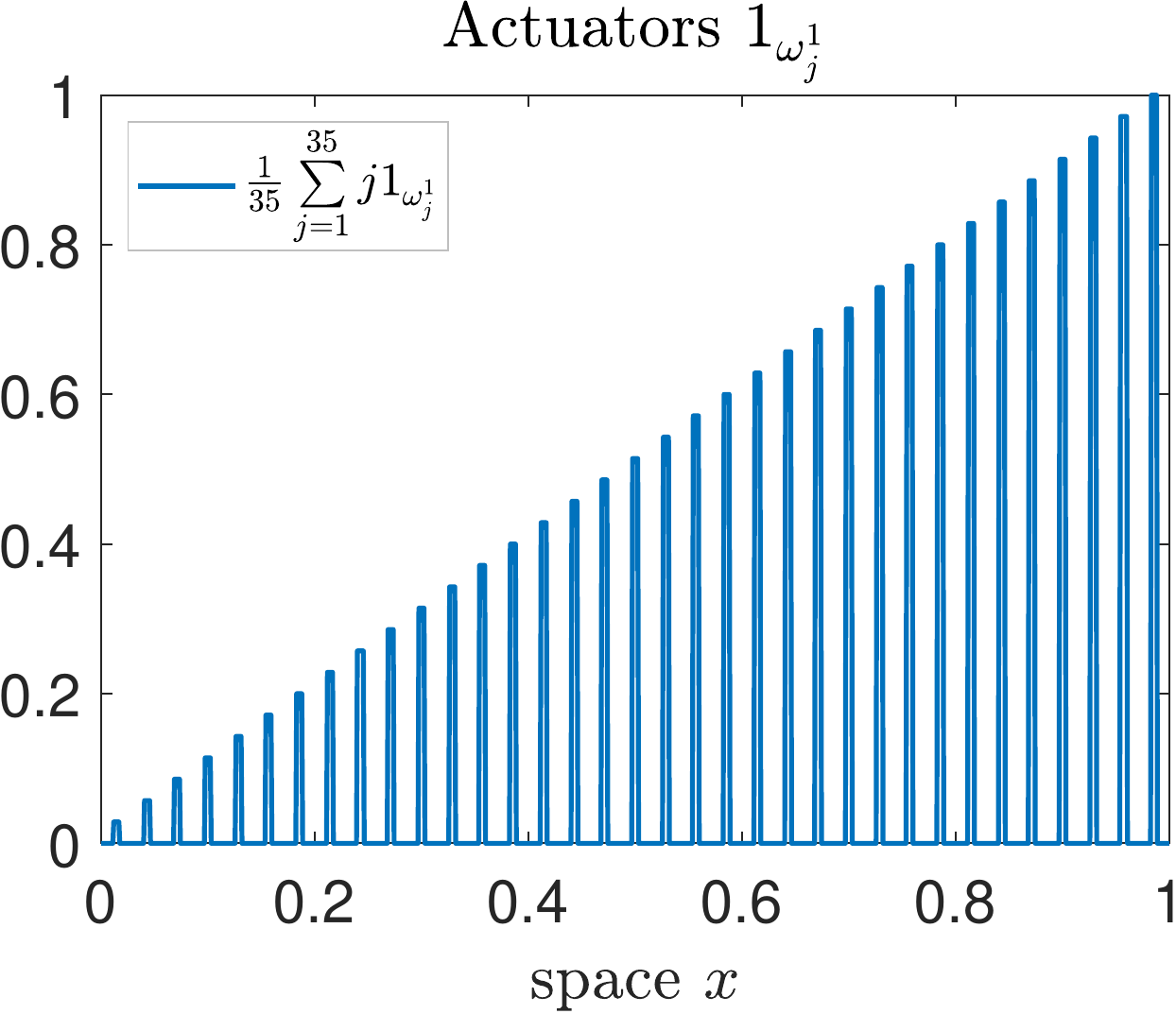}}
\caption{Fluid model: feedback control.}
\label{Fig:ff-magu-act}
\end{figure}

\begin{remark}
Solving the equation for time~$t\in[0,1.5]$ with the parameters~$(\nu_2,\nu_1,\nu_0)$ as in~\eqref{param-fluid} is equivalent (after a rescaling argument in both space and time) to solve the system
\begin{equation}\notag
\tfrac{\p}{\p t}{z}+ \Delta^2z+ \Delta z+z\tfrac{\p}{\p x}z=0
\end{equation}
in the spatial interval~$[0,100)$ with periodic boundary condition (with period $100$), for time $t\in[0,150]$ (and with time step~$100t^{\rm step}=10^{-2}$).
The latter system was solved in~\cite[Sect.~4]{KassamTrefethen05}, in the frequency space, using a scheme based on the Fast Fourier Algorithm  for a slightly different period, namely~$L=32\pi\approx100.53$,
with the initial condition~$z_0=\cos(\frac{2\pi x}{L})(1+\sin(\frac{2\pi x}{L}))$. We observe that in Fig.~\ref{Fig:ff-freetil} we obtain an analogous qualitative behavior to the one in~\cite[Sect.~4, Fig.~6]{KassamTrefethen05}. This is a first sign that we can trust our solver mixing spectral elements with an auxiliary finite elements mass matrix, which we use, in particular to compute the feedback control, the stabilizing performance of which is the main focus of this manuscript. However, for the reader more interested in the free dynamics (restricted to periodic boundary conditions)  the scheme~\cite{KassamTrefethen05} is faster (there the authors used also a different temporal scheme, which allowed them to take a larger time-step~$0.25>10^{-2}$).
\end{remark}

\subsection{Numerical results for the flame propagation model}
Here we choose all the parameters as in~\eqref{param-fluid}, except~$\nu_0$, which we choose as
\begin{equation}\label{nu0fire}
\nu_0=10^{-2}.
\end{equation}

\begin{remark}
We take a different~$\nu_0$ to have, again, a setting  equivalent to solve 
\begin{equation}\notag
\tfrac{\p}{\p t}{z}+ \Delta^2z+ \Delta z+\tfrac12\norm{\tfrac{\p}{\p x}z}{\bbR}^2=0
\end{equation}
in the spatial interval~$[0,100)$ with periodic boundary conditions, for time $t\in[0,150]$.
\end{remark}

Again, in Fig.~\ref{Fig:fp-free} we see that without control it is likely that the trajectory~$\widetilde y_{\rm unc}$ will not converge to the targeted trajectory~$\widehat y$. Then, in Fig.~\ref{Fig:fp-feed} we see that with our feedback control the controlled solution converges to the targeted one as time increases. The magnitudes of the coordinates of the corresponding feedback control are shown in Fig.~\ref{Fig:fp-magu}. The actuators were again taken with supports  as in~\eqref{omeM1D}--\eqref{cM}; see Fig.~\ref{Fig:ff-act}.
 \begin{figure}[ht]
\centering
\subfigure[targeted trajectory, $\widehat y(0,x)=\widehat y_{0}$.]
{\includegraphics[width=0.45\textwidth,height=0.31\textwidth]{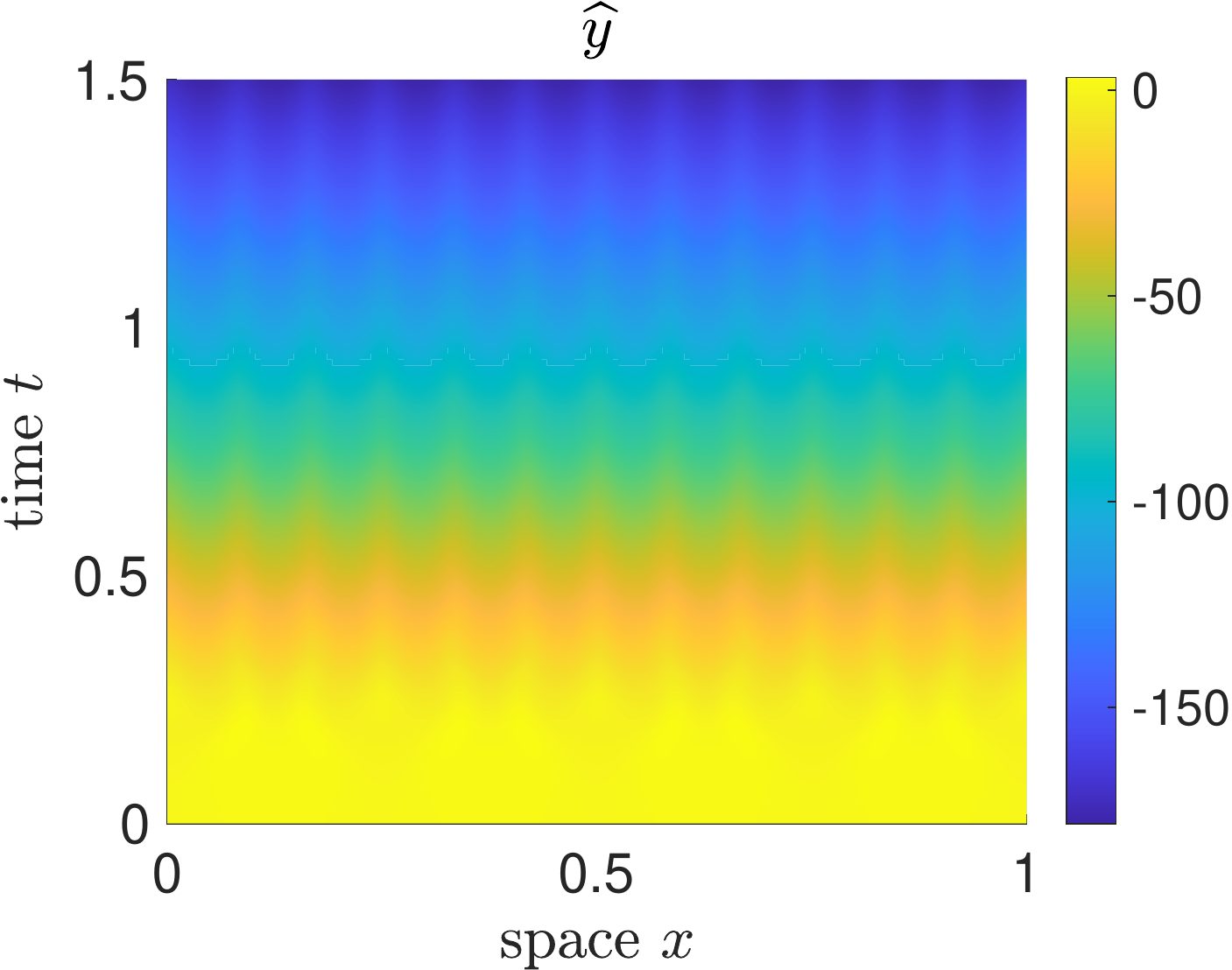}}
\qquad
\subfigure[uncontrolled trajectory, $\widetilde y_{\rm unc}(0,x)=\widetilde y_{0}$.]
{\includegraphics[width=0.45\textwidth,height=0.31\textwidth]{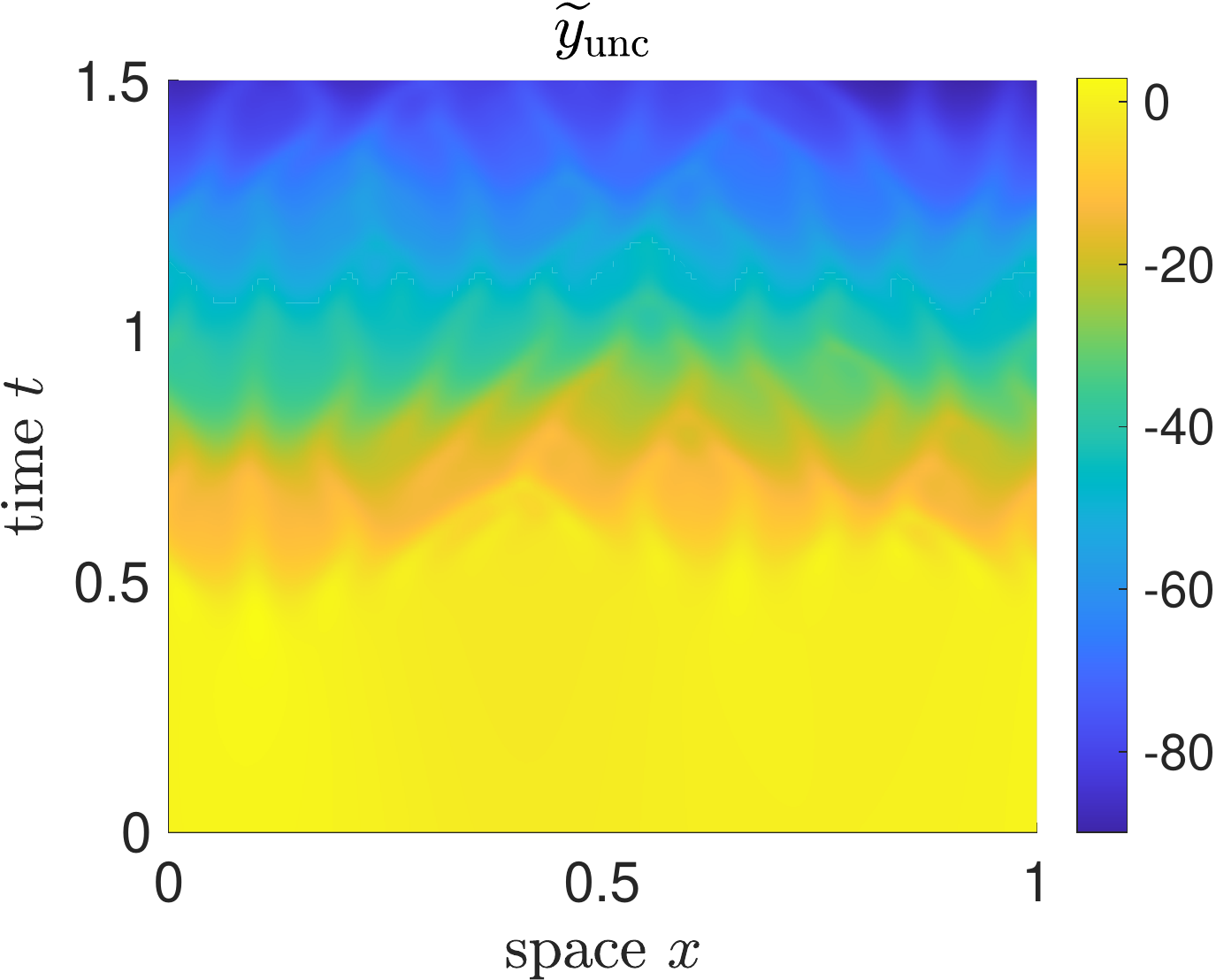}}
\caption{Flame model: free dynamics.}
\label{Fig:fp-free}
\end{figure}

 \begin{figure}[ht]
\centering
\subfigure[controlled trajectory, $\widetilde y(0,x)=\widetilde y_{0}$.]
{\includegraphics[width=0.45\textwidth,height=0.31\textwidth]{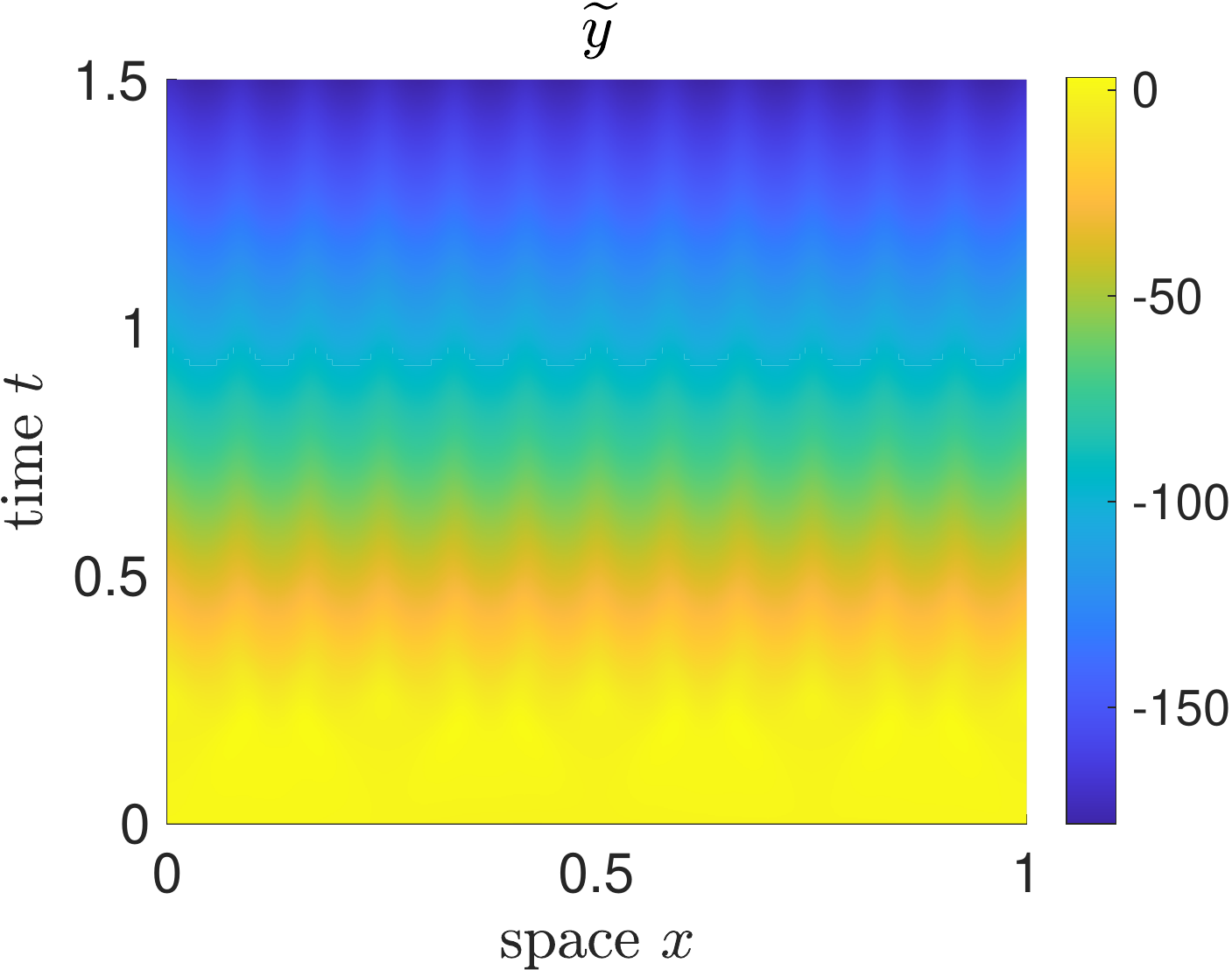}}
\qquad
\subfigure[evolution of distance to target.]
{\includegraphics[width=0.45\textwidth,height=0.31\textwidth]{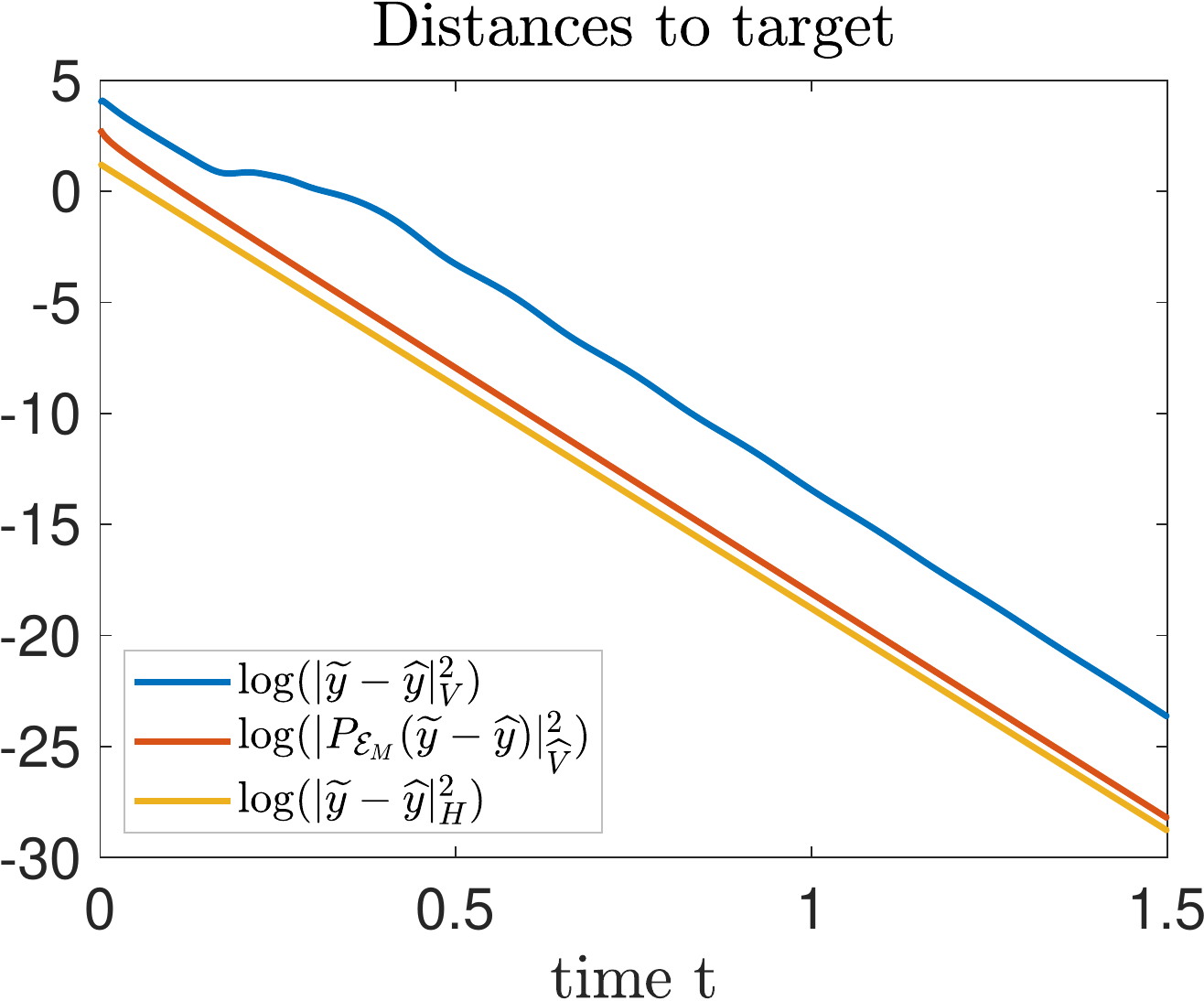}}
\caption{Flame model: controlled dynamics.}
\label{Fig:fp-feed}
\end{figure}

\begin{figure}[ht]
\centering
\subfigure
{\includegraphics[width=0.45\textwidth,height=0.31\textwidth]{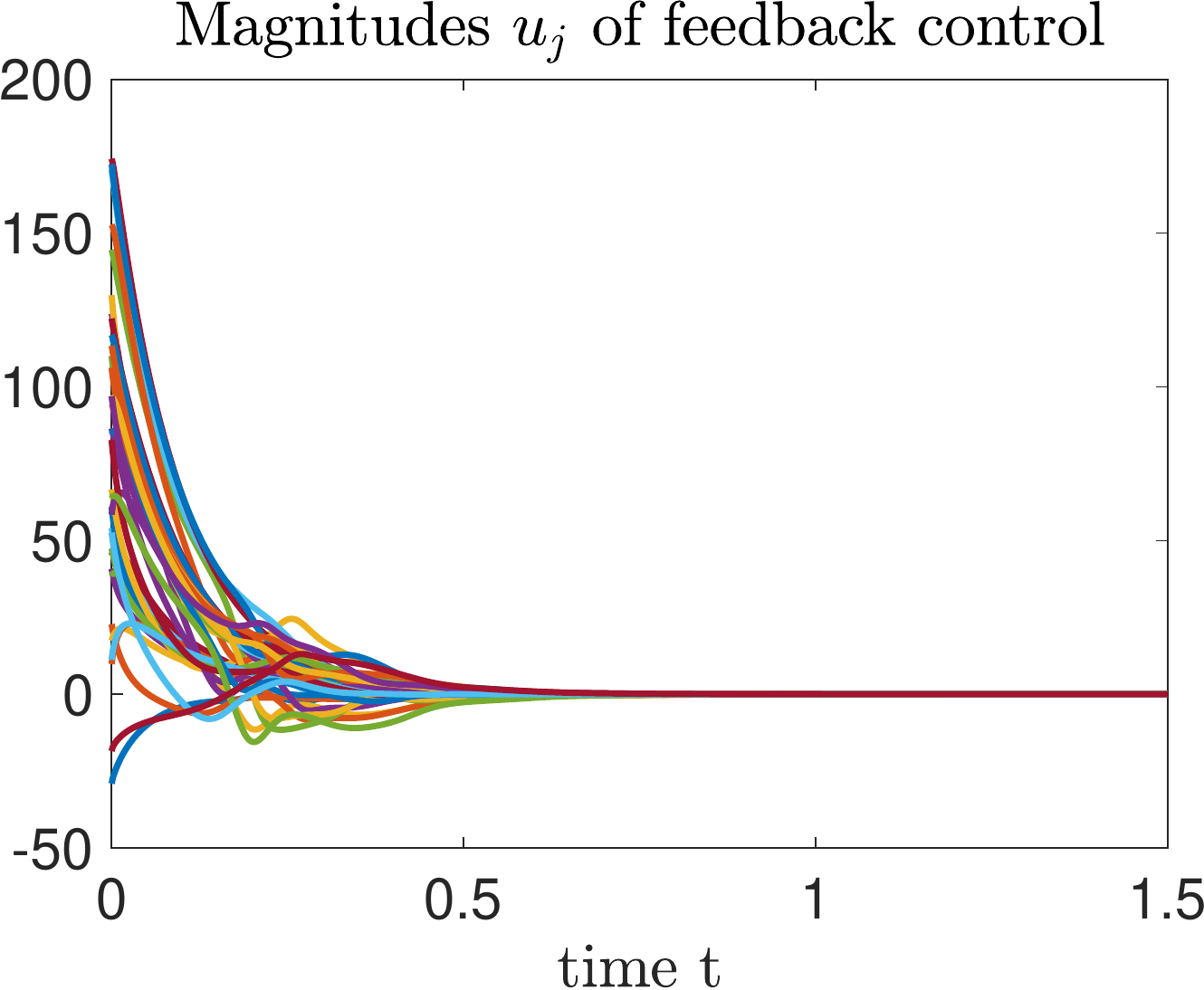}}
\caption{Flame model: signed magnitudes of control coordinates~$u_j$.}
\label{Fig:fp-magu}
\end{figure}

\subsection{Remark on the average of the solutions.}
It is clear that in the fluid flow model the average of the solution is preserved as time increases. Instead, for the flame propagation model the average is strictly decreasing when the gradient of the state is nonzero. This fact is observed, for example, in~\cite[Sect.~2, Eq.~(2.2)]{KalogKeavPapa14}. 
Therefore, as a curiosity, we substract the averages~$\widehat y_1=\int_0^1 \widehat y(x)\,\rmd x$ and~$\widetilde y_{\rm unc1}=\int_0^1 \widetilde y_{\rm unc}(x)\,\rmd x$ of the free dynamics solutions in Fig.~\ref{Fig:fp-free} and show the results in Fig.~\ref{Fig:fp-aver}. Comparing these two figures~\ref{Fig:fp-free} and~\ref{Fig:fp-aver}, we can see that the magnitudes of the free dynamics solutions are dominated by their average.  Likely, the average free components in Fig.~\ref{Fig:fp-aver} remain bounded and present an ``almost'' steady behavior (for~$\widehat y-\widehat y_1$) and a chaotic-like behavior (for~$\widetilde y_{\rm unc}-\widetilde y_{\rm unc1}$).
\begin{figure}[ht]
\centering
\subfigure
{\includegraphics[width=0.45\textwidth,height=0.31\textwidth]{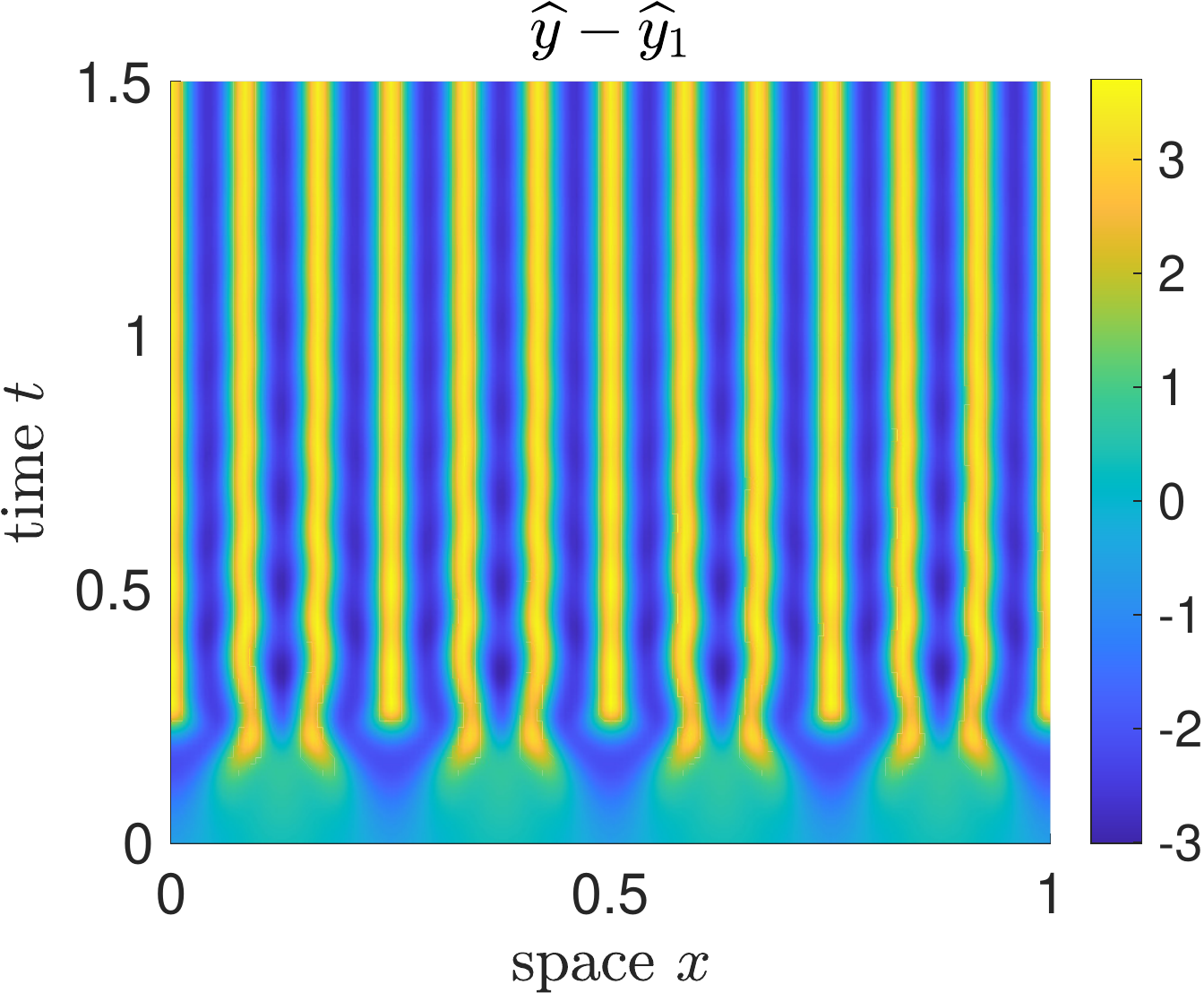}}
\qquad
\subfigure
{\includegraphics[width=0.45\textwidth,height=0.31\textwidth]{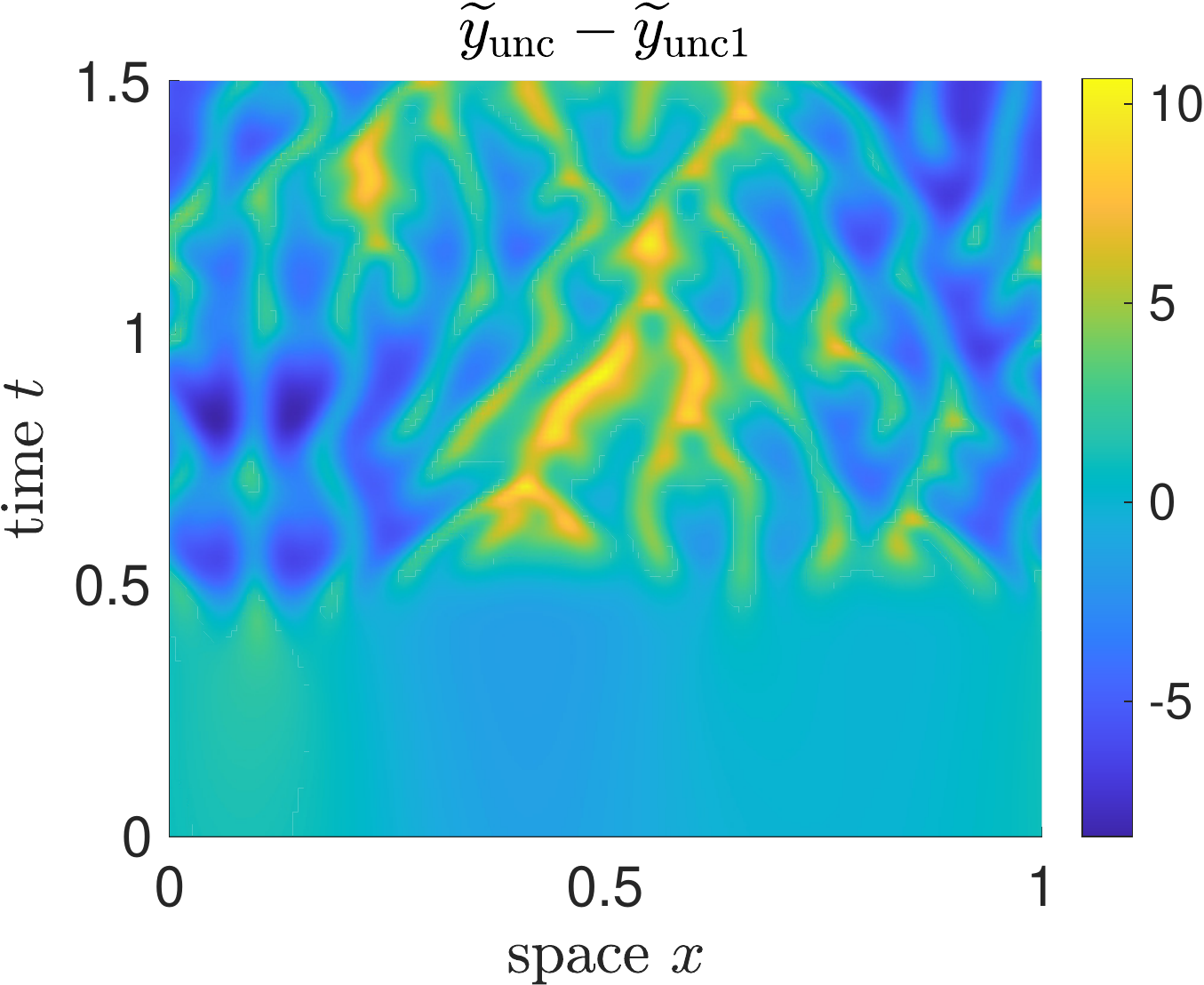}}
\caption{Flame  model: zero average components for free dynamics.}
\label{Fig:fp-aver}
\end{figure}

\subsection{Remarks on the numerical solutions.}
Here we present the results validating the numerical solution of the proposed numerical scheme by comparing the numerical solution with an exact analytical solution, in a toy example, chosen to be a linear combination of~$3$ eigenfunctions as follows,
\begin{subequations}\label{analytical}
\begin{align}
y_{\rm ex}(t,x)=3+2\tfrac{t}{t+1} +10\cos(4t)^2\cos(6\pi x)+(3t+2)^2\rme^{-2t}\sin(2\pi x).
\end{align}
To make such~$y_{\rm ex}$ a solution of our free dynamics system we have taken the initial state
\begin{align}
y_{\rm ex0}=y_{\rm ex}(0,x)=3 +10\cos(6\pi x)+16\sin(2\pi x)
\end{align}
and the external forcings
\begin{align}
&f(t,x)=\tfrac{\p}{\p t}{y_{\rm ex}}+ \nu_2\Delta^2y_{\rm ex}+ \nu_1\Delta y_{\rm ex}+\nu_0y_{\rm ex}\tfrac{\p}{\p x}y_{\rm ex},\quad&&\mbox{for fluid flow model;}
\\
&f(t,x)=\tfrac{\p}{\p t}{y_{\rm ex}}+ \nu_2\Delta^2y_{\rm ex}+ \nu_1\Delta y_{\rm ex}+\tfrac{1}{2}\nu_0\norm{\tfrac{\p}{\p x}y_{\rm ex}}{\bbR^d}^2,\quad&&\mbox{for flame prop. model.}
\end{align}
\end{subequations}

Next, we fix reference discretization parameters as
\begin{align}
N_0=50,\quad t^{\rm step}_0=10^{-4},\quad x^{\rm step}_0=10^{-3},
\end{align}
where~$N_0$ is a reference for the dimension~$N$ for the Galerkin approximation we will solve, $t^{\rm step}_0$ is a reference time-step for the time-step~$t^{\rm step}$ of the temporal discretization and  $x^{\rm step}_0$ is a reference space-step for the spatial time-step $x^{\rm step}$ for the 
discretization of the spatial domain~$\Omega=[0,1)$, which we use to compute the piecewise linear (hat functions based) finite-element mass matrix used in the computation of the nonlinearity.

Then we consider refinements as
\[
N_{\rho+1}\coloneqq 2N_{\rho},\quad t^{\rm step}_{\rho+1}\coloneqq\tfrac12 t^{\rm step}_{\rho},\quad x^{\rm step}_{\rho+1}\coloneqq\tfrac12 x^{\rm step}_{\rho},
\]
and run our solver, for the free dynamics, for each of the~$4$ refinement levels
\[
(N, t^{\rm step}, x^{\rm step})=(N_{\rho}, t^{\rm step}_{\rho}, x^{\rm step}_{\rho}), \quad\mbox{with}\quad\rho\in\{0,1,2,3\}.
\]
As in~\eqref{param-fluid} and~\eqref{nu0fire} we take
\begin{align}
&\nu_2=10^{-6},\quad \nu_1=10^{-2},\quad \nu_0=1,\quad&&\mbox{for fluid flow model;}\notag\\
&\nu_2=10^{-6},\quad \nu_1=10^{-2},\quad \nu_0=10^{-2},\quad&&\mbox{for flame propagation model.}\notag
\end{align}

In Fig.~\ref{Fig:yex} we see a plot of the exact analytical solution~$y_{\rm ex}$.
In Figs.~\ref{Fig:ff-exnum-conv} and~\ref{Fig:fp-exnum-conv} we see the error of our numerical solution for the fluid flow and flame propagation models . In particular, we observe that such error decreases as $\rho$ increases, that is, as we refine the discretization. After each refinement, from the data in the legend of the figures,  the error is likely divided by~$4$, approximately. This suggests a quadratic convergence for the solver, which agrees with our second order  discretizations in time (Crank--Nicolson/Adams--Bashford) and in space (piecewise-linear finite-elements for constructing the auxiliary mass matrix).
\begin{figure}[ht]
\centering
\subfigure
{\includegraphics[width=0.45\textwidth,height=0.31\textwidth]{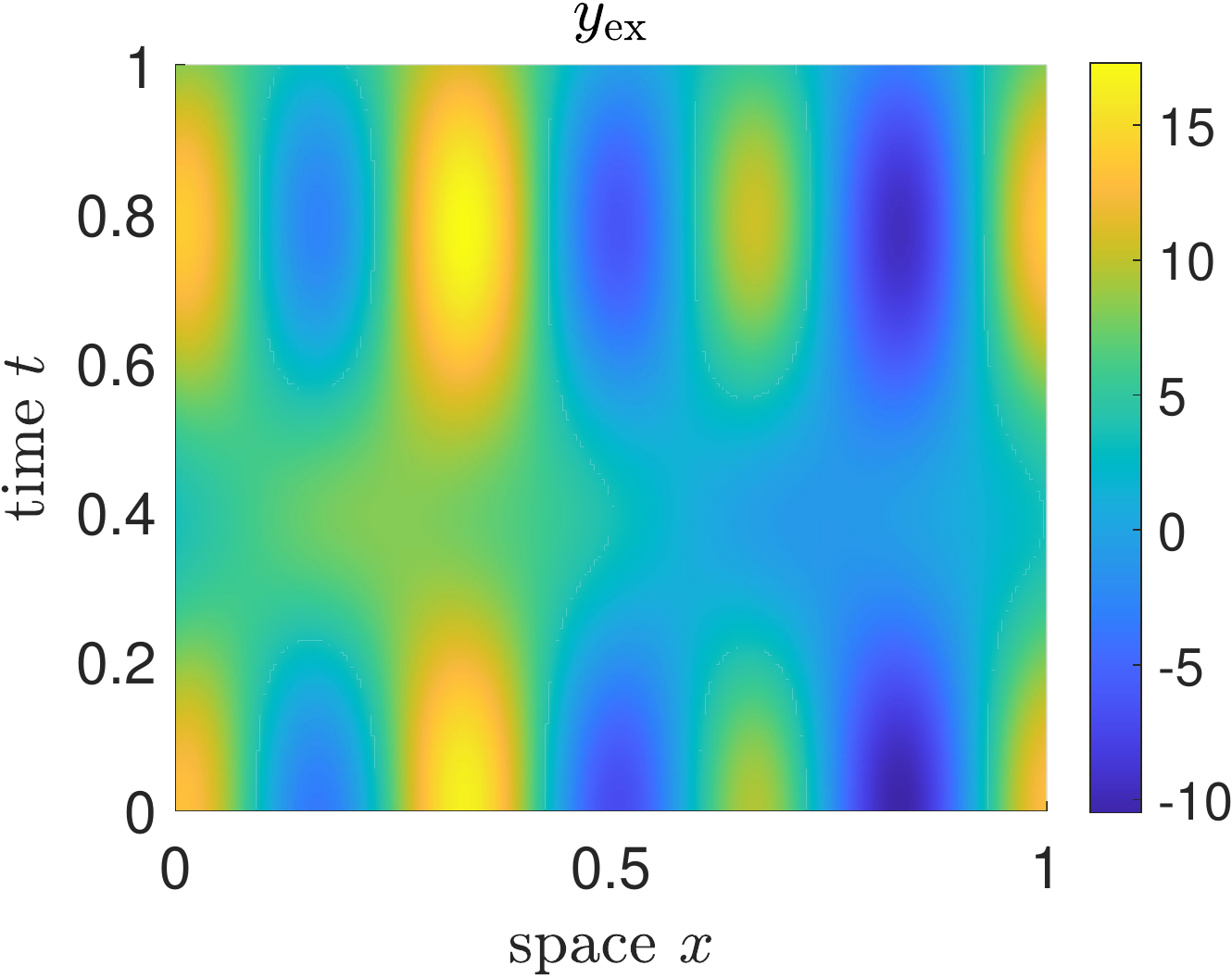}}
\caption{Exact solution~$y_{\rm ex}$.}
\label{Fig:yex}
\end{figure}
\begin{figure}[ht]
\centering
\subfigure[error for level discretization~$\rho=0$]
{\includegraphics[width=0.45\textwidth,height=0.31\textwidth]{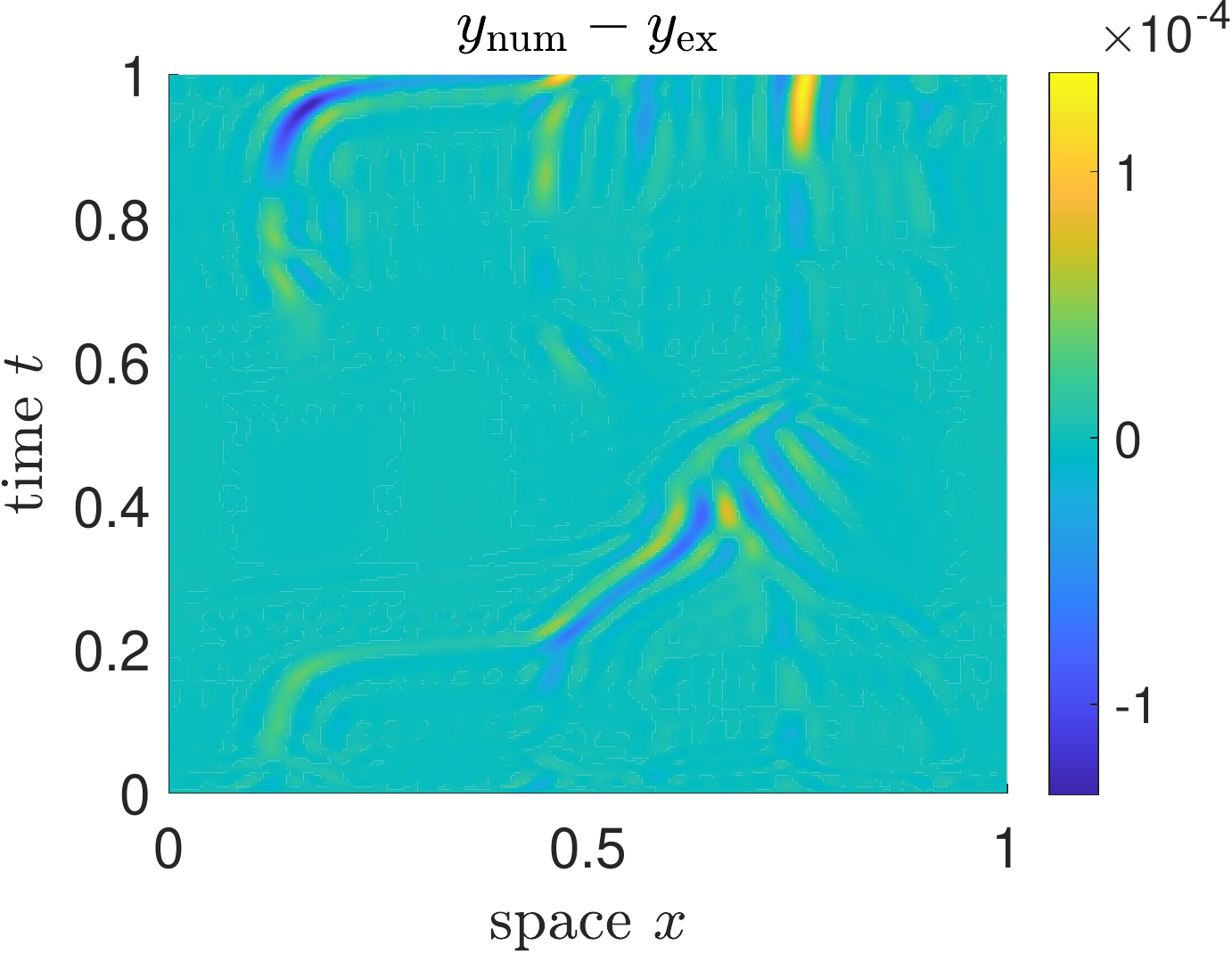}}
\qquad
\subfigure[evolution of error norm; convergence.]
{\includegraphics[width=0.45\textwidth,height=0.31\textwidth]{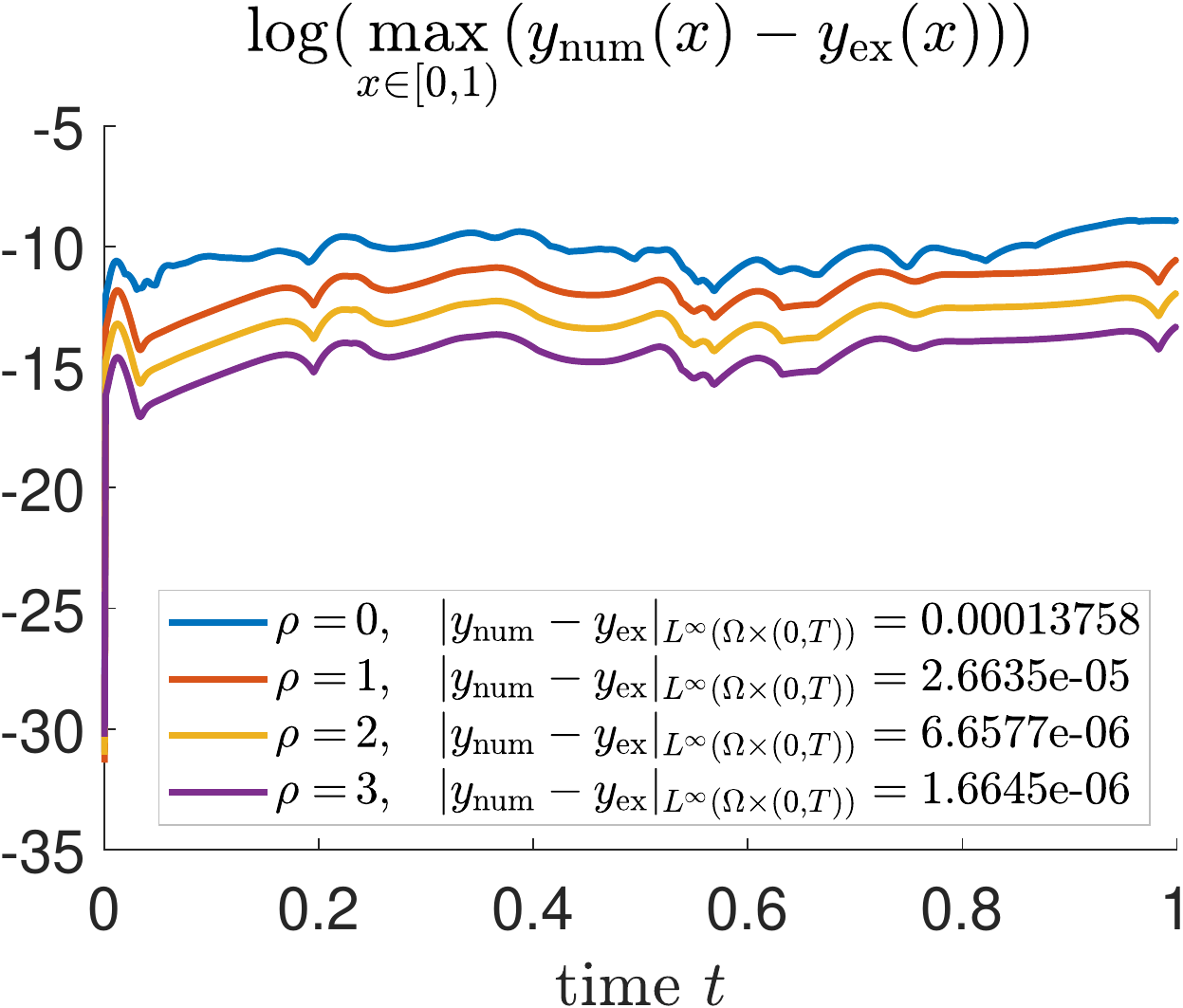}}
\caption{Fluid model: numerical error.}
\label{Fig:ff-exnum-conv}
\end{figure}

\begin{figure}[ht]
\centering
\subfigure[error for level discretization~$\rho=0$]
{\includegraphics[width=0.45\textwidth,height=0.31\textwidth]{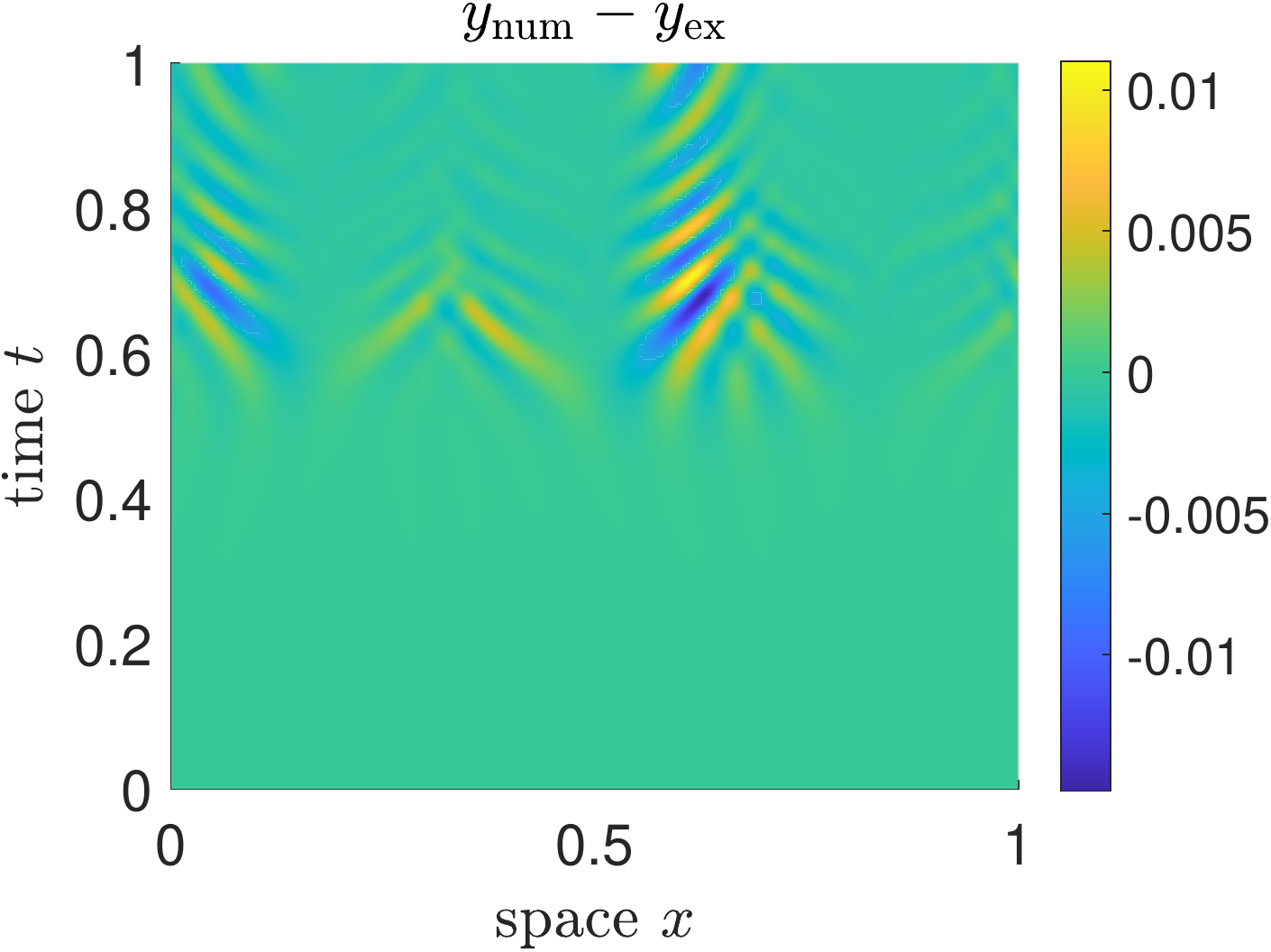}}
\qquad
\subfigure[evolution of error norm; convergence.]
{\includegraphics[width=0.45\textwidth,height=0.31\textwidth]{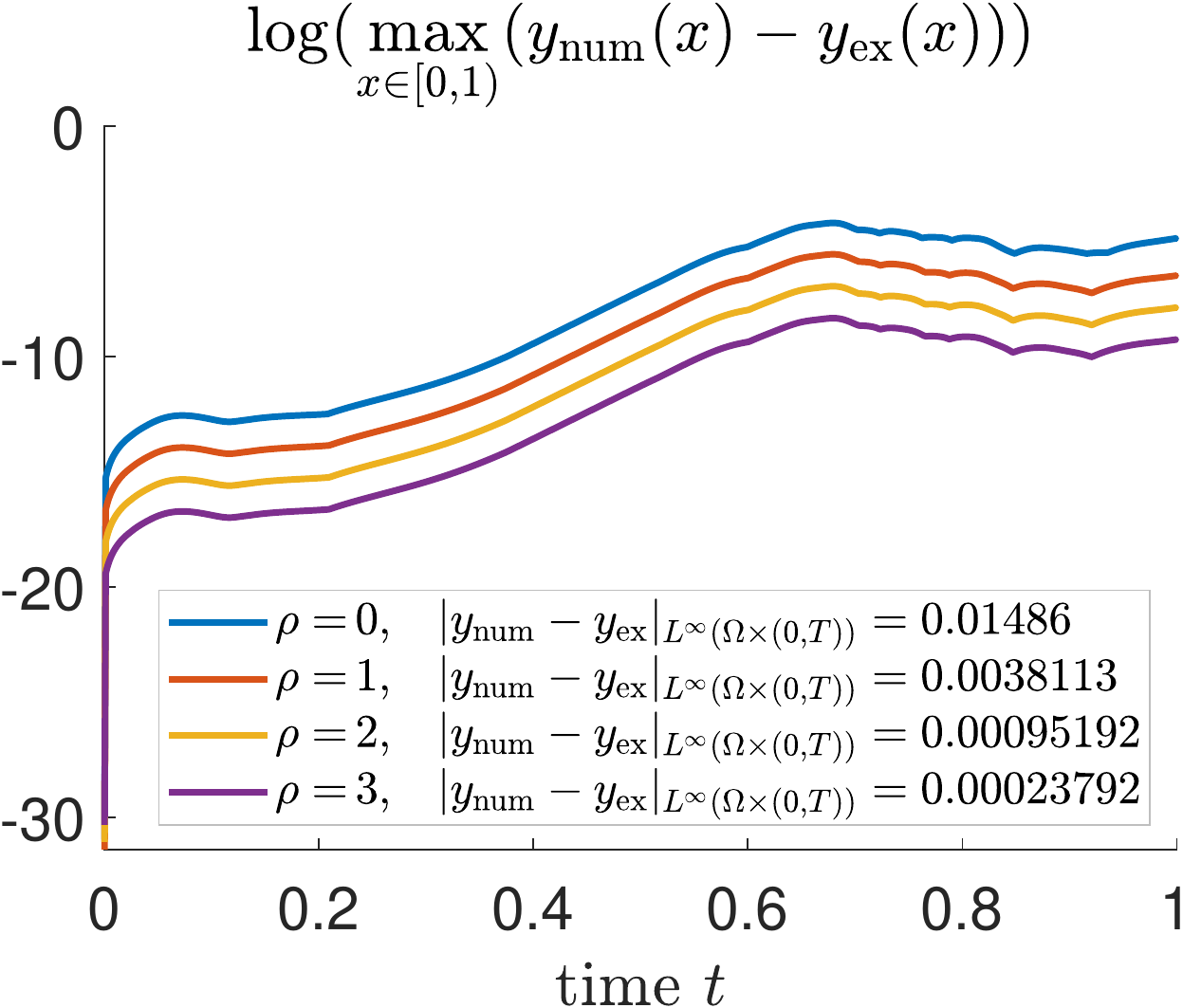}}
\caption{Flame model: numerical error.}
\label{Fig:fp-exnum-conv}
\end{figure}

\section{Conclusion} We have shown that an explicit oblique projection based feedback is able to stabilize the Kuramoto--Sivashinsky models for flame propagation and fluid flow. The stabilizing performance of the feedback operator has been confirmed through the results of numerical simulations. The numerical simulations are focused on the case of periodic boundary conditions. To show the theoretical result under such boundary conditions we showed that the oblique projection used to construct the feedback control is well defined and its norm remains bounded as we increase the number of actuators (for particularly placed and explicitly given actuators), thus extending existing results in the literature for Dirichlet and Neumann boundary conditions.

We addressed the construction of stabilizing feedback controls is an important task in control applications. There are other tasks equally important for control related applications, as state estimation, that could/should be addressed in future works.
Thus, as future work we can think of designing an observer as in~\cite{Rod21-jnls} to construct an estimate~$\breve y$ of the controlled state~$\widetilde y$ of the system (needed to compute/construct the the feedback control coordinates~$u_j$, $\sum u_j(t)\Phi_j(x)=K(t,\widetilde y(t)-\widehat y(t))$, and often not entirely available/``measurable'' in real world applications, e.g., we may not know the initial state~$\widetilde y_0=\widetilde y(0,x)$ exactly), and then use such estimate to construct an approximation~$\breve u$ of the feedback control, $\sum\breve u_j(t)\Phi_j(x)=K(t,\breve y(t)-\widehat y(t))$. For linear parabolic equations, we could make use of the work in~\cite{Rod21-aut},  but for nonlinear equations (as Kuramoto--Sivashinsky) the situation is not so clear due to the lack of so called ``separation principle'', where the dynamics of the estimate error~$\breve y-\widetilde y$  does not decouple/separate from the dynamics of the ``distance to target'' ~$\widetilde y-\widehat y$. Here, we are assuming that the target~$\widehat y$ is known, but we can  also assume (and probably should, in real world problems) an uncertainty on the initial state~$\widehat y_0=\widehat y(0,x)$, in that case we will need an estimate~$\overline y$ for~$\widehat y$ as well, and then use a control like~$\sum\breve u_j(t)\Phi_j(x)=K(t,\breve y(t)-\overline y(t))$.

The robustness of the ``feedback--observer'' coupled closed-loop system against noise and disturbances (model uncertainties) is an important property for applications. The investigation of such robustness is thus another interesting subject for further research, in particular, for nonlinear systems.

\bigskip\noindent
{\bf Aknowlegments.}
D. Seifu was supported by Upper Austria Government and Austrian Science
Fund (FWF): P 33432-NBL,  S. Rodrigues acknowledges partial support from the same grant.

 \bibliographystyle{plainurl}
 \bibliography{KS_Stabiliz0}

\end{document}